\newcommand{\HH}{\mathbf{H}}
\newcommand{\kk}{\mathbf{k}}
\newcommand{\C}{\mathbb{C}}
\newcommand{\CP}{\mathbb{C}P}
\newcommand{\RP}{\mathbb{R}P}
\newcommand{\R}{\mathbb{R}}
\newcommand{\Q}{\mathbb{Q}}
\newcommand{\Z}{\mathbb{Z}}
\newcommand{\id}{\mathrm{Id}}
\newcommand{\im}{\mathrm{im}}
\newcommand{\OP}{\operatorname}
\renewcommand{\Re}[1]{\mathfrak{Re}\,#1}
\renewcommand{\Re}{\mathfrak{Re}}
\theoremstyle{plain}
\newtheorem{thm}{Theorem}[section]
\newtheorem{cor}[thm]{Corollary}
\newtheorem{lem}[thm]{Lemma}
\newtheorem{prop}[thm]{Proposition}
\theoremstyle{definition}
\newtheorem{defn}[thm]{Definition}
\theoremstyle{remark}
\newtheorem{rem}[thm]{Remark}
\newtheorem{ex}[thm]{Example}
\numberwithin{equation}{section}
\title[The persistence of a relative Rabinowitz-Floer complex]{The persistence of a relative Rabinowitz-Floer complex}
\author{Georgios Dimitroglou Rizell}
\address{Department of Mathematics\\
Uppsala University\\
Box 480\\
SE-751 06 Uppsala\\
Sweden}
\email{georgios.dimitroglou@math.uu.se}
\author{Michael G. Sullivan}
\address{Department of Mathematics and Statistics\\
University of Massachusetts\\
Amherst\\
MA 01003\\
USA}
\email{sullivan@math.umass.edu}
\thanks{The first author is supported by the Knut and Alice Wallenberg Foundation under the grant KAW 2016.0198, and by the Swedish Research Council under the grant 2020-04426. The second author is supported by the Simons Foundation grant 708337. The authors thank useful feedback on earlier versions by Yasin Karacan and Stefan Nemirovski.}
\begin{document}

\begin{abstract}
We give a quantitative refinement of the invariance of the Legendrian contact homology algebra in general contact manifolds. We show that in this general case, the Lagrangian cobordism trace of a Legendrian isotopy defines a DGA stable tame isomorphism which is similar to a bifurcation invariance-proof for a contactization contact manifold. We use this result to construct a relative version of the Rabinowitz-Floer complex defined for Legendrians that also satisfies a quantitative invariance, and study its persistent homology barcodes. We apply these barcodes to prove several results, including: displacement energy bounds for Legendrian submanifolds in terms of the oscillatory norms of the contact Hamiltonians; a proof of Rosen and Zhang's non-degeneracy conjecture for the Shelukhin--Chekanov--Hofer metric on Legendrian submanifolds; and, the non-displaceability of the standard Legendrian real-projective space inside the contact real-projective space.
\end{abstract}

\maketitle

\section{Introduction}
\label{sec:Introduction}

Let $(Y^{2n+1}, \xi)$ be a $(2n+1)$-dimensional contact manifold with contact form $\alpha,$ and $\Lambda \subset Y$ be a (closed) $n$-dimensional Legendrian submanifold.
A {\bf Reeb chord} (or {\bf{$\alpha$-Reeb chord}}) of $\Lambda$ is a non-trivial flow starting and ending on $\Lambda,$ of the Reeb vector field $R_\alpha \in \Gamma(TY),$ which is defined by $\alpha({R_\alpha})=1$ and $d\alpha({R_\alpha}, \cdot)=0.$
We are interested in estimating the number of Reeb chords from a given Legendrian (closed submanifold) $\Lambda$ to its image under a contact isotopy with compact support. If there are no such Reeb chords, we say that the contact isotopy {\bf displaces} $\Lambda$ for that given $\alpha.$ 
This is the contact analogue of a Hamiltonian isotopy displacing a Lagrangian submanifold \cite{ChekanovDisplacement}.
Our Main Theorem \ref{thm:Main} has more information about Reeb chords than the known analogous results for Lagrangian intersection points, because we not only give a single lower bound on how long some fixed number of chords persist, but rather, a sequence of lower bounds depending on the number of chords required to persist.

Our main tool is a filtered {\bf{Legendrian contact homology differential graded algebra}} (also called the {\bf{Chekanov-Eliashberg DGA}}).
Let $\mathcal{A}(\Lambda)$ be the free non-commutative unital algebra over the field (or ring)  $\kk$ freely generated by $\alpha$-Reeb chords of $\Lambda.$ If the moduli spaces of these disks can be oriented in a coherent way, e.g.~by the choice of a spin structure on the Legendrian as in \cite{EES05c,Karlsson}, then $\kk$ is $\Z$ or $\Z_p.$ Otherwise $\kk = \Z_2.$

The grading is induced by the Conley--Zehnder index of Reeb chords;  see Appendix \ref{sec:puregrading}.

The differential $\partial$ has degree $-1$ and counts $J$-holomorphic disks in the symplectization $(\R_\tau \times Y, d(e^\tau \alpha))$ with Lagrangian boundary condition $\R_\tau \times \Lambda.$ 

%

Each Reeb chord $c$ has a {\bf{length}} (or {\bf{action}}) $ \ell(c) := \int_c \alpha>0.$
For $0 < l \le \infty,$
let $\mathcal{A}^l(\Lambda)$ be the unital sub-algebra generated by those generators $c$ length bounded from above by $\int_c \alpha <l.$
The action-decreasing property of the differential, which is a direct consequence of the positivity of $d\alpha$-area and Stokes' theorem, implies that $\mathcal{A}^l(\Lambda) \subset \mathcal{A}(\Lambda)$ is a unital sub-DGA.


An {\bf{augmentation}} for the DGA $\mathcal{A}^l$  is a (graded) DGA-morphism to the ground field, $\varepsilon: (\mathcal{A}^l, \partial) \rightarrow (\kk,\partial_\kk := 0).$
Because $\ell(1)=0,$ for any Chekanov-Eliashberg DGA $\mathcal{A},$ there exists $l>0$ such that $\mathcal{A}^l$ has an augmentation.

The {\bf{oscillation}} of a contact ($\alpha$-)Hamiltonian $H_t:Y \times \R_\tau \rightarrow \R$ is
\[
\|H_t\|_{\OP{osc}} 
:= \int_0^1 \left(\max_{y \in Y} H_t - \min_{ y \in Y} H_t\right)dt
\]
This oscillation defines the Hofer norm of the corresponding contact Hamiltonian isotopy $\phi^t_{\alpha,H_t}$



In order to circumvent the analytical difficulties of establishing invariance of the Legendrian contact homology algebra for general contact forms, we will make certain technical assumptions on the contractible periodic Reeb orbits of $(Y,\alpha)$, at least below some fixed length. To this purpose, we introduce the following definition.

\begin{defn}
  \label{acs}Consider a contractible and non-degenerate periodic Reeb orbit $\gamma$ of $(Y,\alpha)$. We let  $|\gamma| \in \Z \cup \{-\infty\}$ denote the \emph{minimum} of the expected dimensions of the moduli spaces of unparameterized  pseudoholomorphic planes inside the symplectization  $\R \times Y$ that are asymptotic to the Reeb orbit $\gamma$ at the convex end, where the symplectization has been equipped with a cylindrical almost complex structure. In the case when the first Chern class vanishes, this expected dimension does not depend on the chosen plane, and $|\gamma| \in \Z$.
\end{defn}
Note that, in the aforementioned moduli space, we do not identify planes that differ by a translation of the symplectization coordinate. See Appendix \ref{sec:gradingperiodic} for more details. 
\begin{ex} 
{{The expected dimension $|\gamma|$ of a contractible Reeb orbit is at least two}}
for suitable non-degenerate perturbations of the round contact sphere
  $$\left(S^{2n+1},\alpha_{st}=\frac{1}{2}\sum_i(x_idy_i-y_idx_i) \right), \:\: n \ge 1,$$
as well as for the high dimensional ``lens-spaces'' given as the quotients $S^{2n+1}/\Z_k$ for a subgroup $\Z_k \subset S^1$. We will here consider the case $\RP^{2n+1}=S^{2n+1}/\Z_2$;  see Proposition \ref{prop:planedegree} for the relevant index computation in the case of $\RP^{2n+1}$. The computations for the sphere and lens spaces are analogous.
  \end{ex}
The assumption $|\gamma| >1$ for all contractible Reeb orbits, allows us to define the Legendrian contact homology algebra without involving the contact homology algebra for the periodic Reeb orbits; see \cite[Section 3.3.3]{GDRSurgery}. (Note that the contact homology algebra of periodic Reeb orbits has a canonical augmentation in this case.) The assumption $|\gamma|>1$ also eliminates the need for considerations of the periodic Reeb orbits in the proof of the invariance result from \cite{Ekholm} for the Legendrian contact homology algebra under a Legendrian isotopy (while fixing the ambient contact form). 

Before we state our results in detail, we give a quick summary, noting how general the set-up is in light of the above technical discussion on the Legendrian contact homology algebra. 
\begin{itemize}
\item
Theorem \ref{thm:Main} proves lower bounds for the number of Reeb chords between a Legendrian and its image under a contact Hamiltonian in terms of the oscillation norm. This is for an arbitrary Legendrian in an arbitrary contact manifold, but the bounds incorporate the technical condition on 
$|\gamma|$ mentioned above.
\item
Theorem \ref{thm:NonDegen} proves that the Shelukhin--Chekanov--Hofer metric of a Legendrian orbit space is non-degenerate. This is for an arbitrary Legendrian in an arbitrary contact manifold.
\item
Corollary \ref{cor:Nakumara} proves that the $C^0$-limit of a sequence of Legendrians is again Legendrian. Here we assume the contact manifold is geometrically bounded and there exists some lower bound on the length of Reeb chords in the sequence. But there is no assumption on the closed Reeb orbits.
\item
Theorem \ref{thm:Interlink} generalizes the ``inter-linkedness" of an ordered pair of Legendrians \cite[Theorem 1.5]{EntovPolterovich2021a}. We make several assumptions here: the two Legendrians are individually augmentable; $|\gamma| >1$ holds for all closed Reeb orbits; and the resulting well-defined Rabinowitz Floer complex is not acyclic.
\item
Theorem \ref{thm:RP} proves the non-displaceability of the Legendrian equator in standard contact $\RP^{2n+1}$ equipped with a small perturbation of the standard round $S^1$-invariant contact form. We will show that the $|\gamma| >1$ assumption is satisfied because the perturbation is small.
\item Proposition \ref{prop:Bifurcation} proves roughly that the Legendrian contact homology algebra, below any fixed action level, of an arbitrary Legendrian undergoing a small generic isotopy in an arbitrary contact manifold is invariant under stable-tame isomorphism. Here we assume $|\gamma| >1$ holds for those closed orbits below this action level bound.
\end{itemize}

\begin{thm}[Main Theorem]
\label{thm:Main}
Fix a generic closed Legendrian $\Lambda \subset (Y,\alpha)$ of a contact manifold with a fixed contact form.  Generically, we can order the Reeb chords by action
$$0< \ell(c_1) < \ell(c_2) < \ldots.$$
Also, write $\hbar \in (0,+\infty]$ for the minimal length of a contractible periodic Reeb orbit $\gamma$ that satisfies $|\gamma| \le 1$. Suppose that $\mathcal{A}^l(\Lambda)$ with $0 < l \le \infty$ admits an augmentation to the field $\kk$ where $l  \le  \hbar$.
Fix $k$ and consider any compactly supported contact Hamiltonian $H_t \colon Y \to \R$ such that  $\|H_t\|_{\OP{osc}} < \min\left\{l, \ell(c_k)\right\}$
Then there exists at least 
$$
\sum_{i=0}^n \dim\left(H_{i}(\Lambda; \kk)\right)-2(k-1)
$$
many Reeb chords with one endpoint on $\Lambda$ and the other endpoint on $\phi^1_{\alpha,H_t}(\Lambda).$
\end{thm}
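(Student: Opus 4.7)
The plan is to analyze the action-filtered relative Rabinowitz--Floer complex $C$ for the pair $(\Lambda, \phi^1_{\alpha,H_t}(\Lambda))$, extract the chord count from the rank of its persistent homology, and use the persistence-stability principle to track the effect of the contact Hamiltonian.

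First I would transport the given augmentation. By Proposition~\ref{prop:Bifurcation}, the Lagrangian cobordism trace of the isotopy $\phi^t_{\alpha,H_t}$ yields a filtered stable tame isomorphism of DGAs. Under the hypotheses $l\le\hbar$ and $\|H_t\|_{\OP{osc}}<l$, this lets me pull $\varepsilon$ back to an augmentation $\varepsilon'$ of the corresponding sub-DGA of $\phi^1_{\alpha,H_t}(\Lambda)$. The assumption that $|\gamma|>1$ for every contractible Reeb orbit $\gamma$ of length less than $\hbar$ guarantees that all the relevant moduli spaces are unobstructed by closed orbit contributions, so that the augmentation-transport and all curve counts below action $l$ make sense.

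Next, using $(\varepsilon,\varepsilon')$, I would assemble the relative Rabinowitz--Floer complex $C=RFC(\Lambda,\phi^1_{\alpha,H_t}(\Lambda);\varepsilon,\varepsilon')$ whose generators are exactly the Reeb chords between $\Lambda$ and $\phi^1_{\alpha,H_t}(\Lambda)$ that we wish to count. The action filtration turns $C$ into a persistence module over $\kk$ with a well-defined barcode. The central computation is that, after a continuation interpolating $H_t$ down to the zero Hamiltonian, the barcode of $C$ has at least $\sum_{i=0}^n\dim H_i(\Lambda;\kk)$ essential bars matching a Morse-type model for $\Lambda$, while all finite bars shift in endpoint by at most $\|H_t\|_{\OP{osc}}$ along the continuation (the standard persistence-stability estimate for filtered continuation maps).

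Finally the counting. Each essential bar uses at least one generator of $C$, so the number of Reeb chords in question is at least the number of essential bars that survive the continuation. By the stability estimate an essential bar can be destroyed only by pairing with a generator of action within $\|H_t\|_{\OP{osc}}<\ell(c_k)$ of it, and the only Reeb chords of $\Lambda$ available in that range are $c_1,\dots,c_{k-1}$. Each such $c_i$ can absorb at most one essential bar, turning it into a finite bar at the cost of two endpoints. Accounting for at most $k-1$ such cancellations gives the lower bound $\sum_{i=0}^n\dim H_i(\Lambda;\kk)-2(k-1)$ on the surviving generators, hence on the Reeb chord count. The main obstacle is precisely the barcode identification in step two together with the tightness of the cancellation accounting in step three: establishing that the essential part of the barcode of $C$ really is $\sum\dim H_i(\Lambda;\kk)$ requires a careful filtered analysis of the stable-tame isomorphism of Proposition~\ref{prop:Bifurcation} combined with the augmentation twist of the Rabinowitz differential, and this is the technical heart of the argument.
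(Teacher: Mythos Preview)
Your overall strategy---build the filtered Rabinowitz--Floer complex, identify a Morse subcomplex at one end of the isotopy, track its barcode via persistence stability (Theorem~\ref{thm:pwc} and Proposition~\ref{prop:Barcode}), and bound the chord count by the surviving bars---is exactly the paper's. There are, however, two concrete issues.

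First, you cannot ``interpolate $H_t$ down to zero'' to obtain a Morse model: at $H=0$ the two Legendrians coincide, there are no nondegenerate mixed Reeb chords, and $RFC(\Lambda,\Lambda)$ is not defined as a complex with a compatible basis. The paper resolves this by replacing the second copy with a $C^\infty$-small perturbation $\Lambda_0'$ of the time-$\epsilon$ Reeb push-off of $\Lambda$, and applies the isotopy to \emph{that}. At $t=0$ the complex $RFC^{[-2\epsilon,2\epsilon)}(\Lambda,\Lambda_0')$ is then literally the Morse complex of $\Lambda$: the short mixed chords of length $\approx\epsilon$ are the Morse critical points, and the differential counts gradient trajectories. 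This push-off is what makes your ``Morse-type model'' precise and is an essential part of the setup.

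Second, the source of the factor $2$ is not ``two endpoints of a finite bar.'' The pure chords $c_1,\dots,c_{k-1}$ are not themselves generators of $RFC$; only mixed chords are. In the push-off picture each pure chord $c_i$ of $\Lambda$ produces \emph{two} mixed chords between $\Lambda$ and $\Lambda_0'$, one in each direction, with actions near $\pm\ell(c_i)$. By Lemma~\ref{lem:main} the action of a Morse bar drifts by at most $\|H_t\|_{\OP{osc}}<\ell(c_k)$ during the isotopy, so only these $2(k-1)$ long mixed chords lie in range to collide with and annihilate a Morse bar. The paper implements this via two moving action windows per Morse generator and defers the fine bookkeeping to \cite[Section~3.4]{DRS2}.
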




\begin{rem}
\label{rem:ClosedOrbits}
The assumption on the expected dimension of the pseudoholomorphic planes inside the symplectization should not be strictly necessary, but possible to replace by a condition on the existence of an augmentation of the contact homology algebra of periodic Reeb orbits below the action level $l > 0$. However, the setup and analysis become significantly simplified under our stronger assumption. More precisely, without these assumptions, one would have to define the Chekanov--Eliashberg algebra with coefficients in the periodic orbit contact homology algebra in order to define the differential; or, in the case when $(Y,\alpha)$ admits an exact symplectic filling, use anchored disk as in \cite{Ekholm19}. More importantly, our additional hypothesis allows us to avoid the technical gluing and transversality results for pseudoholomorphic planes asymptotic to periodic orbits that require the use of virtual perturbations as done by Pardon \cite{Pardon}, Bao--Honda \cite{BaoHonda}, and the Polyfold theory of Hofer--Wysocki--Zehnder applied in the SFT setting \cite{Polyfolds} by Fish--Hofer. With the additional hypothesis $ l \le  \hbar $, we only need SFT compactness for closed Reeb orbits \cite{BEHWZ}, and the gluing/transversality results for pseudoholomorphic discs with a single positive Reeb chord asymptotic.

Finally note that, even when replacing the assumption on the expected dimension of the pseudoholomorphic half-planes by the existence of an augmentation, the contact form remains fixed throughout our Legendrian isotopies. Hence, the difficult analytical issues that arise when proving the invariance under deformations of the contact form would not be present even in the more general case.
\end{rem}

In \cite{DRS2},  we studied this problem when $Y = P \times \R_z$ and $\alpha = dz - \theta$ where $(P, d\theta)$ is an exact symplectic manifold with a certain bounded geometry at infinity. In both cases we used a persistence homology theory (via barcodes) defined by Reeb chords. The main difference between our setup in that article and the typical setup, is that we considered complexes with a finite action filtration.

The new aspects of this article are: we translate known Floer-continuation results to new (but needed) Floer-bifurcations results; we prove an invariance for a newly-defined limited-action-window Rabinowitz Floer homology theory, which allows for chords of zero-length to appear. 

We now describe several new applications of the Main Theorem \ref{thm:Main}.

Fix a subset $N \subset Y$ and let $\mathcal{L}(N)$ be its orbit space under the identity component of the contactomorphism group, $\mbox{C}_0(Y, \xi).$ 
Given a contact isotopy $\phi^t$  and contact form $\alpha,$ let $H^{\alpha,\phi}_t$ be the contact Hamiltonian for $\phi^t.$
Following Rosen and Zhang's \cite[Definition 1.7]{RosenZhang}, define the pseudo-metric on $\mathcal{L}(N)$
$$\delta_\alpha(N, N') = 
\inf \left\{ \int_0^1\max |H_t|dt; \,\, \phi^1_{\alpha,H_t}(N) = N' \right\}. 
$$
Shelukhin shows that this defines a right-invariant non-degenerate norm on $\mbox{C}_0(Y, \xi)$
\cite{Shelukhin17JSG}.
 If $N$ is $n$-dimensional and somewhere not Legendrian (or more generally, if $N$ is not contact coisotropic) then this pseudo-metric identically vanishes \cite[Proposition 7.4]{RosenZhang}. 
We answer \cite[Conjecture 1.10]{RosenZhang}.

\begin{thm}
\label{thm:NonDegen}
The Shelukhin--Chekanov--Hofer distance $\delta_\alpha$  on a contact manifold that is assumed to be either closed, or which admits a codimension-zero contact embedding into a closed contact manifold,  is non-degenerate when restricted to closed Legendrian submanifolds.
\end{thm}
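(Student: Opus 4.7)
We argue by contradiction. Suppose $\Lambda\neq \Lambda'$ are distinct closed Legendrians with $\delta_\alpha(\Lambda, \Lambda')=0$. Using the codimension-zero embedding hypothesis we reduce to the case where $Y$ is closed, and by approximating $\Lambda$ with a $C^\infty$-close generic Legendrian at arbitrarily small Hofer cost we may assume that $\Lambda$ has non-degenerate Reeb chords. By the vanishing of $\delta_\alpha$, there exist compactly supported contact Hamiltonians $H^{(n)}_t$ with $\phi^1_{\alpha,H^{(n)}}(\Lambda) = \Lambda'$ and $\int_0^1 \max_Y|H^{(n)}_t|\,dt\to 0$; in particular $\|H^{(n)}\|_{\OP{osc}}\le 2\int_0^1 \max|H^{(n)}_t|\,dt\to 0$.

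The plan is to apply Theorem \ref{thm:Main} with $k=1$. Fix a filtration threshold $l>0$ smaller than both $\hbar$ (which is positive on a closed contact manifold, since the shortest closed Reeb orbit has positive length) and the length of the shortest Reeb chord of $\Lambda$, so that $\mathcal{A}^l(\Lambda)=\kk$ admits the trivial augmentation. For $n$ large, $\|H^{(n)}\|_{\OP{osc}} < l$, and Theorem \ref{thm:Main} produces at least $\sum_i \dim H_i(\Lambda;\kk)\ge 1$ Reeb chord from $\Lambda$ to $\Lambda'$. Crucially, tracing through the barcode-based proof, the chord $c_n$ appears as a generator in a persistence bar of the relative Rabinowitz--Floer complex whose length is dominated by the Hofer oscillation of the isotopy, giving the action bound $\ell(c_n)\le \|H^{(n)}\|_{\OP{osc}}\to 0$.

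Thus we extract a sequence of Reeb chords $c_n$ from $\Lambda$ to $\Lambda'$ with $\ell(c_n)\to 0$. By SFT compactness in the closed manifold $Y$, a subsequence degenerates onto a constant map at some $x_0\in\Lambda\cap\Lambda'$, so $\Lambda\cap\Lambda'\neq\emptyset$. To promote this to $\Lambda\subseteq\Lambda'$, fix any $q\in\Lambda$ and precompose each $H^{(n)}$ with an auxiliary contact Hamiltonian isotopy $\sigma_m$, supported in a Darboux chart $U_m\ni q$ shrinking to $\{q\}$ and of arbitrarily small oscillation norm, chosen so that the chord-producing argument applied to $(\sigma_m(\Lambda), \Lambda')$ returns intersection points inside $U_m$. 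The oscillation of the composite Hamiltonian still tends to zero, so these intersection points must accumulate at $q$, and closedness of $\Lambda'$ forces $q\in\Lambda'$. Symmetrizing (using the time-reversed isotopies, which have the same Hofer oscillation norm) yields $\Lambda'\subseteq\Lambda$, and so $\Lambda=\Lambda'$, a contradiction.

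The hardest step will be justifying the action bound $\ell(c_n)\le \|H^{(n)}\|_{\OP{osc}}$, which is not part of the stated conclusion of Theorem \ref{thm:Main} and must be extracted by identifying the produced chord with a specific bar in the relative Rabinowitz--Floer barcode whose length is controlled by the Hofer oscillation. The localization argument is also delicate: it must be set up so that the intersection points produced by the bootstrap step actually accumulate near the chosen reference point $q\in\Lambda$, rather than drifting elsewhere on $\Lambda\cap\Lambda'$.
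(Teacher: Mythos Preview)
Your proposal has a genuine gap in the localization step, and the paper takes a fundamentally different route that sidesteps this issue entirely.

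\textbf{The paper's approach.} The key observation you are missing is the Chekanov-type dichotomy of Rosen--Zhang \cite[Theorem 1.9]{RosenZhang}: on the orbit $\mathcal{L}(\Lambda)$, the pseudo-metric $\delta_\alpha$ is either non-degenerate or identically zero. This reduces the problem to exhibiting \emph{one} pair $\Lambda_1,\Lambda_2$ in the orbit with $\delta_\alpha(\Lambda_1,\Lambda_2)>0$. The paper then constructs such a pair explicitly as $\Lambda_i=j^1(i\epsilon^2 f)$ in a Weinstein neighborhood, where $f$ is a perfect-enough Morse function, and analyzes the barcode of $RFC_*^{[-2\epsilon,2\epsilon)}(\Lambda,\Lambda_t)$ directly (not via Theorem~\ref{thm:Main}) to produce an explicit positive lower bound $C>0$ on $\delta_\alpha(\Lambda_1,\Lambda_2)$ in terms of the critical values of $f$.

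\textbf{Why your localization step fails.} Your argument up to obtaining $\Lambda\cap\Lambda'\neq\emptyset$ is plausible (though see below). But the step promoting this to $\Lambda\subseteq\Lambda'$ does not work. You propose to perturb $\Lambda$ by $\sigma_m$ supported in a shrinking neighborhood $U_m\ni q$ and claim the chord-producing argument ``returns intersection points inside $U_m$.'' There is no mechanism for this: the chord produced by Theorem~\ref{thm:Main} (or its barcode refinement) lies \emph{somewhere} between $\sigma_m(\Lambda)$ and $\Lambda'$, with no control on its location. Concretely, if $\Lambda\cap\Lambda'=\{x_0\}$ with $x_0$ far from $q$, and $\sigma_m$ is a tiny wiggle near $q$, then $\sigma_m(\Lambda)\cap\Lambda'$ is still essentially $\{x_0\}$, and the short chords you produce will accumulate at $x_0$, not at $q$. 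Nothing in the barcode argument singles out the region $U_m$.

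\textbf{A secondary issue.} Your action bound ``$\ell(c_n)\le\|H^{(n)}\|_{\OP{osc}}$'' conflates the \emph{length of a bar} with the \emph{action of the generator}. In the proof of Theorem~\ref{thm:Main} the persisting bars originate from Morse chords of action $\approx\epsilon$ (the Reeb push-off amount), and Lemma~\ref{lem:main} controls how far their endpoints drift under the isotopy. So the chord at time~$1$ has action roughly $\epsilon+O(\|H^{(n)}\|_{\OP{osc}})$, and moreover it lies between $\Lambda$ and $\phi^1(\Lambda_0')$ rather than between $\Lambda$ and $\Lambda'$. This is salvageable by letting the push-off $\epsilon\to 0$ in tandem with $n\to\infty$ and passing to a limit, but it needs to be said; as written, the bound is not correct.

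In summary: invoke the Rosen--Zhang dichotomy and you only need to produce one pair at positive distance, which eliminates the need for your problematic localization argument entirely.
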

\begin{rem}
  \begin{enumerate}
\item The condition on admitting an embedding into a closed contact manifold holds, for example, for contactizations and prequantizations of Liouville domains;
\item  The non-degeneracy should hold for more general open contact manifolds which are of bounded geometry. For the proof the following features are crucial: the constant $\hbar \ge 0$ must be strictly positive for some contact form, and we must be in a setting in which the SFT compactness theorem for pseudoholomorphic curves is satisfied (we need e.g.~convexity assumptions at infinity).
  \end{enumerate}
  \end{rem}

Two special cases of this were already proved: when the Legendrian is hypertight (no contractible Reeb orbits or chords) \cite{Usher20} and when the Legendrian is orderable (no positive loops of Legendrians) \cite{Hedicke21}.

Consider  a sequence of closed Legendrians $\Lambda_i$ which $C^0$-converges to a smooth (not necessarily Legendrian) embedding $\Lambda_\infty.$ Assume that there exists a $\delta$ such that each $\Lambda_i$ has no Reeb chord $\gamma$ of $\alpha$-length $\ell(\gamma)< \delta.$
Nakamura proves that $\Lambda_\infty$ is again Legendrian assuming two conditions \cite[Theorem 3.4]{Nakamura}
\begin{enumerate}
\item
The Reeb vector field $R_\alpha$ is nowhere tangent to $\Lambda_\infty$ and certain geometric boundedness conditions hold for $M$.
\item
$(Y, \alpha) = (P \times \R_z, dz - \theta).$ 
\end{enumerate}
The only need for the second hypothesis in Nakamura's proof is to use \cite{DRS2} to show the Reeb chords persist under a contact isotopy of small oscillatory norm.
 Since our Theorem \ref{thm:Main} generalizes the persistence of Reeb chords proven in  \cite{DRS2} to more general contact manifolds, we get an easy corollary.
\begin{cor}
\label{cor:Nakumara}
Nakamura's convergence result holds only assuming the initial hypotheses: the Reeb chord lengths cannot approach 0, $\Lambda_i$ is closed, and the ambient contact manifold $(M,\alpha)$ is  either closed, or admits a codimension-zero contact embedding into a closed contact manifold.
\end{cor}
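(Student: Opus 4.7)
The plan is to follow Nakamura's contradiction scheme from the proof of \cite[Theorem 3.4]{Nakamura} essentially verbatim, substituting our Main Theorem \ref{thm:Main} for the appeal to the contactization persistence result of \cite{DRS2} on which Nakamura relies. As the paragraph just before the corollary already explains, the contactization hypothesis enters Nakamura's argument only through this persistence statement, so the generalization from $Y = P \times \R_z$ to an arbitrary closed contact manifold (or one admitting a codimension-zero contact embedding into a closed one) is immediate once a quantitative persistence result is available in this broader setting.

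In more detail, I would assume for contradiction that $\Lambda_\infty$ is not Legendrian and invoke Nakamura's local construction in a Darboux chart about a point $p$ where $T_p \Lambda_\infty \not\subset \xi_p$. This produces a compactly supported contact Hamiltonian $H_t$ with $\|H_t\|_{\OP{osc}}$ as small as desired, yet for which the Reeb chords from $\Lambda_i$ to $\phi^1_{\alpha, H_t}(\Lambda_i)$ are strictly fewer than $\sum_{i=0}^n \dim H_i(\Lambda_i; \kk)$ for all sufficiently large $i$ (using the $C^0$-convergence $\Lambda_i \to \Lambda_\infty$ to transfer the local effect of $H_t$ from the limit to the sequence). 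Theorem \ref{thm:Main} applied with $k = 1$ and with the tautological augmentation of $\mathcal{A}^l(\Lambda_i)$---which, for any $0 < l < \delta$, is generated only by the unit by virtue of the uniform Reeb chord lower bound $\delta$---then delivers at least $\sum_{i=0}^n \dim H_i(\Lambda_i; \kk) \geq 1$ such chords, a contradiction.

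The main obstacle is reconciling the hypothesis $l \leq \hbar$ of Theorem \ref{thm:Main}, which bounds the length of contractible closed Reeb orbits with $|\gamma| \leq 1$, with the corollary's blanket absence of any closed-orbit assumption. I expect to handle this by applying a $C^2$-small perturbation of $\alpha$ supported away from the Darboux chart at $p$, chosen so that every contractible closed Reeb orbit of length less than a chosen small $l$ has $|\gamma| > 1$ (or simply does not exist); such perturbations are generic at each fixed action level and neither destroy the Reeb chord lower bound for the $\Lambda_i$ nor disturb the local effect of the constructed Hamiltonian, so the preceding argument goes through unchanged.
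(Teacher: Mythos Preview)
Your approach is correct and is precisely what the paper intends: the paper gives no separate proof of the corollary beyond the sentence preceding it, which says that Nakamura's argument goes through once the contactization-specific persistence input from \cite{DRS2} is replaced by Theorem~\ref{thm:Main}. Your outline of the contradiction scheme (local Darboux construction near a non-Legendrian point, small-oscillation Hamiltonian, trivial augmentation of $\mathcal{A}^l(\Lambda_i)$ for $l<\delta$, apply Theorem~\ref{thm:Main} with $k=1$) is exactly right.

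Your worry about the hypothesis $l\le\hbar$ is legitimate to raise, but the resolution is much simpler than the perturbation you sketch. On a closed contact manifold (or one that contact-embeds into a closed one) the Reeb vector field is nowhere vanishing on a compact set, so there is a strictly positive lower bound on the period of \emph{any} closed Reeb orbit. Choosing $l>0$ smaller than both $\delta$ and this minimal period, there are no closed Reeb orbits whatsoever of length below $l$, and in particular none that are contractible with $|\gamma|\le 1$; hence $l\le\hbar$ holds vacuously. This is exactly why the corollary carries no closed-orbit assumption (compare the remark after Theorem~\ref{thm:NonDegen}, where the authors note that the crucial feature is $\hbar>0$ for some contact form). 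So you may delete the final paragraph of your proposal and simply record this observation in one line.
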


Entov and Polterovich define an ordered pair of  Legendrians $(\Lambda_0, \Lambda_1)$ in  $(Y, \alpha)$ to be 
{\bf{interlinked}} if for some $\mu, c>0$ and every contact Hamiltonian $H$ satisfying 
$H \ge c,$ there is a Hamiltonian orbit $\gamma$ from $\Lambda_0$ to $\Lambda_1$ such that 
$\ell(\gamma) \le \mu/c.$ Using a persistence homology theory for Reeb chords, they proved if $\Lambda_0$ is the $0$-section in 
$(Y = J^1Q, \alpha = dz-pdq),$  $\Lambda_1 \subset J^1Q$ has an augmentation, and there exists a unique non-degenerate Reeb chord from $\Lambda_0$ to  $\Lambda_1,$ then $(\Lambda_0, \Lambda_1)$ are interlinked \cite[Theorem 1.5]{EntovPolterovich2021a}.
We can generalize this.

\begin{thm}
\label{thm:Interlink}
Suppose $\Lambda_0, \Lambda_1 \subset (Y, \alpha)$ are a generic pair of Legendrians with generic contact form $\alpha$ for which $|\gamma| \ge 2$ is satisfied for all contractible  periodic Reeb orbits $\gamma$ (c.f. Remark \ref{rem:ClosedOrbits}). Assume $\Lambda_0, \Lambda_1$ have augmentations. 

Assume that the Rabinowitz Floer complex $RFC^{[0,+\infty)}(\Lambda_0,\Lambda_1)=LCH(\Lambda_0,\Lambda_1),$ which is well-defined, see Section \ref{sec:Banana}, is not acyclic. (This is automatically the case e.g.~when the chords of positive action cannot be partitioned into pairs $(c,d)$ with $\mathfrak{a}(d)>\mathfrak{a}(c)$ and index difference $|d|-|c|=1$.)
Then $(\Lambda_0, \Lambda_1)$ are interlinked.
\end{thm}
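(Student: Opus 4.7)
The plan is to adapt the Entov--Polterovich interlinkedness argument to this setting by using the persistence barcode of $RFC^{[0,+\infty)}(\Lambda_0,\Lambda_1) = LCH(\Lambda_0,\Lambda_1)$ in place of the filtered Chekanov-Eliashberg DGA for a single Legendrian. The non-acyclicity hypothesis yields at least one infinite bar in the persistence barcode; the parenthetical sub-hypothesis guarantees this automatically, since then no pair of generators with consecutive indices and correctly ordered actions is available to cancel any homology class. I would let $\mu > 0$ denote the birth coordinate (equivalently, left endpoint) of such an infinite bar.

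Next, I would use a bifurcation-type invariance, generalizing Proposition \ref{prop:Bifurcation} from the Chekanov-Eliashberg DGA of a single Legendrian to the two-pointed complex $LCH(\Lambda_0,\Lambda_1)$ under deformations of one Legendrian. Along the path of pairs $(\phi^s_{\alpha, H_t}(\Lambda_0),\Lambda_1)$ generated by a compactly supported contact Hamiltonian $H_t$ with $H_t \geq c > 0$, the barcode endpoints vary continuously in $s$ with total shift bounded by the running oscillation; more importantly for interlinkedness, the pointwise positivity $H_t \geq c$ forces a monotone shift of the action spectrum at rate at least $c$. Thus if no Reeb chord between $\phi^s_{\alpha, H_t}(\Lambda_0)$ and $\Lambda_1$ of $\alpha$-length $\leq \mu/c$ appears for any $s \in [0,1]$, then the birth coordinate of the infinite bar is pushed below $0$ by time $s=1$, contradicting its infinite persistence within the action window $[0,+\infty)$.

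Such a Reeb chord from $\phi^1_{\alpha,H_t}(\Lambda_0)$ to $\Lambda_1$ then translates, via the standard graph construction setting $\gamma(t) := \phi^t_{\alpha, H_t}(x)$ for $x$ the initial endpoint of the chord, into a Hamiltonian orbit from $\Lambda_0$ to $\Lambda_1$ whose $\alpha$-length is controlled by $\mu/c$ once the lower bound $H_t \geq c$ is used to convert the Reeb-chord length estimate into an $\alpha$-length estimate on the Hamiltonian orbit. This exhibits constants $(\mu, c)$ witnessing interlinkedness.

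The main obstacle is twofold: first, extending Proposition \ref{prop:Bifurcation} to the pair setting, which requires redoing the bifurcation analysis of Section \ref{sec:Banana} along a one-parameter family of pairs rather than a single Legendrian, and controlling the action windows in both the positive and (infinitesimally) negative direction; second, rigorously establishing the action-shift formula that links the positivity $H_t \geq c$ to a corresponding shift of the barcode endpoints, which requires tracking the conformal rescaling of the contact form under a contact Hamiltonian flow.
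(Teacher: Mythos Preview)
Your overall strategy matches the paper's: track the barcode of the Rabinowitz complex as $\Lambda_0$ is pushed by the flow of $H \ge c$, and use the action-shift computation to show that all mixed-chord actions, hence all bar endpoints, decrease monotonically at rate at least $c$. Both of your ``obstacles'' are already handled in the paper: the invariance of the pair complex under Legendrian isotopy is exactly Theorem~\ref{thm:pwc}, and the action-shift formula is Lemma~\ref{lem:main}.

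However, your conclusion has a genuine gap. Interlinkedness asks for a \emph{trajectory of the contact Hamiltonian flow} from $\Lambda_0$ to $\Lambda_1$, i.e.\ a time $T$ with $\phi^T_{\alpha,H}(\Lambda_0)\cap\Lambda_1 \neq \emptyset$; it does not ask for a Reeb chord between $\phi^s_{\alpha,H}(\Lambda_0)$ and $\Lambda_1$. Your ``graph construction'' does not convert one into the other: with $x$ the starting point of such a Reeb chord you have $\gamma(0)=x \in \phi^1_{\alpha,H}(\Lambda_0)$, not $\Lambda_0$, and $\gamma(1)$ has no reason to lie on $\Lambda_1$. Moreover you run the flow only for $s\in[0,1]$, whereas the time of the sought orbit is itself the parameter and must be allowed to range up to $\mu/c$.

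The correct endpoint of the argument is as follows. Run the flow for $t \ge 0$. The starting point of the infinite bar is at each moment the action $\mathfrak{a}$ of some mixed chord, and by Lemma~\ref{lem:main} it decreases at rate at least $c$ from its initial value $\mu$. Hence at some first time $T \le \mu/c$ this action reaches zero; at that moment the corresponding chord has length zero, meaning there is a point $q \in \phi^T_{\alpha,H}(\Lambda_0)\cap\Lambda_1$. The desired Hamiltonian orbit is then $t \mapsto \phi^t_{\alpha,H}\big((\phi^T_{\alpha,H})^{-1}(q)\big)$ for $t\in[0,T]$. In other words, the bar's starting point reaching action zero is not a contradiction to be derived; it is precisely the event that produces the orbit. (The paper carries this out in the full complex $RFC_*$ restricted to a large finite window, which is why it also records the case of a finite bar crossing zero, but your $[0,+\infty)$ picture is adequate once the conclusion is fixed.)
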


Some hypothesis on the mixed chords is needed. 
Suppose $\Lambda_0,\Lambda_1$ are both in two Darboux charts in $(J^1Q, dz-pdq),$ separated by a large $z$-distance. Isotope $\Lambda_0$ in the $(p,q)$ direction such that the $T^*Q$-projections of $\Lambda_0$ and $\Lambda_1$ no longer intersect. During the isotopy we see (for example, via the $T^*Q$- or  $J^0(Y)$-projection) the mixed chords canceling in pairs  with strips connecting them of expected dimension 1. In this case  neither 
$(\Lambda_0, \Lambda_1)$ nor $(\Lambda_1, \Lambda_0)$ are interlinked.

The standard Legendrian $(n+1)$-sphere inside the standard contact sphere is given by
$$\Lambda_0 := S^{2n+1} \cap \mathfrak{Re} \C^{n+1},$$ 
i.e.~the intersection of the conical Lagrangian in $\C^{n+1}$ which is given by the real-part and the unit sphere. Quotienting by the $\Z_2$-antipodal map $z \mapsto -z \in \C^{n+1},$ $\Lambda_0/\Z_2$ is the standard Legendrian embedding of $\RP^{n}$ into the standard contact $\RP^{2n+1}=S^{2n+1}/\Z_2$.
It is easy to show that $\Lambda_0$ can be displaced from itself in $S^{2n+1}.$
By computing a Rabinowitz Floer theory (Definition \ref{defn:MCC}), we prove this is not true for this real projective plane. 

\begin{thm}
\label{thm:RP}
Consider the standard contact $\RP^{2n+1}$ equipped with a small non-degenerate  perturbation of the standard round $S^1$-invariant contact form as described in Proposition \ref{prop:RCF}. The standard Legendrian $\RP^n=\Lambda_0/\Z_2$, has a Chekanov--Eliashberg algebra that admits an augmentation, and a well-defined and invariant Rabinowitz Floer homology that is equal to
$$RFH(\RP^{n},\RP^{n})=\bigoplus_{i\in\Z} \Z_2[i].$$
In particular, the standard  Legendrian  $\RP^n$ cannot be displaced from itself for these contact forms.
\end{thm}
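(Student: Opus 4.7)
The plan is to compute $RFH(\RP^n,\RP^n)$ directly by exploiting the Morse--Bott structure of the round $S^1$-invariant Reeb dynamics, and then conclude non-displaceability from the invariance/persistence of the Rabinowitz complex established earlier in the paper.

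First I would analyse the Reeb geometry of $\RP^{2n+1}$. The round form $\alpha_{st}$ on $S^{2n+1}$ has Reeb flow $\phi_t(z)=e^{it}z$; this commutes with the antipodal $\Z_2$-action and descends to $\RP^{2n+1}$, where the induced orbits close at time $\pi$ since $[e^{i\pi}z]=[-z]=[z]$. The simple orbits are non-contractible (they generate $\pi_1(\RP^{2n+1})=\Z_2$), so every contractible periodic orbit has action $2k\pi$, and Proposition \ref{prop:planedegree} verifies $|\gamma|\ge 2$ for all of them. Thus the hypothesis $|\gamma|>1$ is satisfied and the Chekanov--Eliashberg DGA is defined without periodic-orbit corrections. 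For $\Lambda=\RP^n$, the time-$k\pi$ map sends $[x]\mapsto[(-1)^kx]=[x]$, so I would show by a direct computation that all Reeb chords of $\Lambda$ organise as Morse--Bott families at actions $k\pi$, $k\ge 1$, each parametrised by $\Lambda$ itself.

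The next step is to apply the standard Morse--Bott perturbation scheme of Bourgeois, adapted to Legendrians through cascades and a $C^2$-small symmetric perturbation as in Proposition \ref{prop:RCF}, using a Morse function $f$ on $\RP^n$ with exactly one critical point in each index $0,\dots,n$. This replaces each family by $n+1$ non-degenerate chords whose degrees cover an interval $[\mu_k,\mu_k+n]$; Proposition \ref{prop:planedegree} also gives the Conley--Zehnder shift $\mu_{k+1}-\mu_k=n+1$, so the intervals tile the integers $\ge\mu_1$ without gap or overlap. Within a single level the differential reduces to the Morse differential of $f$ on $\RP^n$, contributing $H_\ast(\RP^n;\Z_2)=\Z_2^{n+1}$ at each level. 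A potential inter-level differential could only connect the top generator at level $k$ to the bottom generator at level $k+1$; I would rule these out using the residual $\Z_2$-symmetry of the round model, preserved by a symmetric perturbation, acting freely on the relevant moduli spaces of disks with one long positive chord asymptotic, forcing the mod-$2$ count to vanish. Assembling the levels, the positive-action part of the Chekanov--Eliashberg cohomology becomes $\bigoplus_{i\ge\mu_1}\Z_2[i]$; the Rabinowitz complex of Section \ref{sec:Banana} then adds dual negative-action chords and a Morse model on the zero-action diagonal, yielding $RFH(\RP^n,\RP^n)=\bigoplus_{i\in\Z}\Z_2[i]$. The DGA admits the trivial augmentation on positive-action generators, which is a chain map by positivity of action combined with a grading check at the bottom level.

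Non-displaceability is then immediate from the invariance of the Rabinowitz complex under contact isotopies established via Theorem \ref{thm:Main} and the bifurcation refinement of Proposition \ref{prop:Bifurcation}: any displacing contact Hamiltonian would force $RFH(\RP^n,\RP^n)=0$, contradicting the computation above. The hardest step I anticipate is the vanishing of the inter-level differentials---Morse--Bott perturbations generically break the $S^1$-symmetry that would make this automatic, so one must arrange the perturbation to preserve just enough of the antipodal/residual symmetry to run a free-$\Z_2$-action argument on the moduli, while remaining generic enough for transversality of the Legendrian contact moduli spaces.
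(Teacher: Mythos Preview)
Your overall architecture is close to the paper's: set up the Reeb geometry, perturb the Bott families, compute gradings so that $RFC_*(\tilde\Lambda_0,\tilde\Lambda_1)$ has exactly one generator in each degree, show the differential vanishes, and deduce non-displaceability from invariance of RFH. The paper carries this out in Section~\ref{sec:RP} using a specific push-off $\tilde\Lambda_{V_s}$ by a family of linear Lagrangian subspaces (Lemma~\ref{lem:mixeddegrees}), rather than a generic Morse--Bott perturbation of the Reeb push-off, but this is a cosmetic difference.

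The genuine gap is your treatment of the vanishing of the differential. You propose a ``residual $\Z_2$-symmetry acting freely on the moduli spaces,'' but you do not say what this $\Z_2$ is, and there is no obvious candidate: the antipodal involution is already quotiented out in passing to $\RP^{2n+1}$, and the natural anti-holomorphic involutions (e.g.\ complex conjugation) reverse the contact form rather than preserve it, so do not act on the moduli spaces of pseudoholomorphic disks in the way you need. Even if some involution exists, you would still need it to act \emph{freely} on exactly the moduli spaces contributing to the inter-level terms, and to survive a perturbation that is simultaneously generic enough for transversality---you flag this yourself, and it is not clear it can be done. The paper sidesteps this entirely with a different and rather elegant argument (Lemma~\ref{lem:actionshift} and Proposition~\ref{prop:RCF}): there is a contact-form-preserving isotopy $\phi^t$ with $\phi^1$ fixing both $\tilde\Lambda_0$ and $\tilde\Lambda_1$ setwise, under which the family of mixed chords $c_i$ is carried to $c_{i+1}$. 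If a finite bar existed from $c_i$ to $c_{i+1}$, the barcode continuity of Theorem~\ref{thm:pwc} applied to this isotopy would force $c_{i+1}$ to be simultaneously the endpoint of one finite bar and the start of another, which is impossible.

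A second, smaller gap: your augmentation argument (``positivity of action combined with a grading check'') works only for $n\ge 2$, where Proposition~\ref{prop:halfplanedegree} gives $|c|\ge n\ge 2$. For $n=1$ there is a single chord of degree exactly $1$, and grading alone does not exclude $\partial c = 1$. The paper handles $n=1$ separately in Proposition~\ref{prop:aug} by observing that $\RP^3 = UT^*S^2$ and $\tilde\Lambda_V$ is the conormal lift of a point, hence bounds an exact Lagrangian filling; the induced augmentation then forces $\partial c = 0$.
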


There are two standard methods to prove the invariance of holomorphic-curve-based theories, such as the one underlying Theorem \ref{thm:Main}.

Given a generic Legendrian isotopy $\{\Lambda_t\}_{t\in[-,+]}$ between two generic Legendrian submanifolds $\Lambda_-$ and $\Lambda_+,$
there is a filtered unital DGA-morphism 
$$\Phi: \mathcal{A}(\Lambda_+) \rightarrow \mathcal{A}(\Lambda_-)$$ 
given by counting certain holomorphic disks in the symplectization $\R \times Y$ with boundary lying on an exact Lagrangian constructed from the trace of the isotopy with $\Lambda_+$ (resp. $\Lambda_-$) at the positive (resp. negative) end.
See \cite{Chantraine, ChantraineColinDR, EkholmHondaKalman} for details on these Lagrangian trace constructions.
The filtration is defined by the length.
$\Phi$ has a DGA-homotopy inverse, and so induces an isomorphism on homology \cite{Ekholm}. This invariance approach is known as the 
{\bf{continuation method}}.

The {\bf{bifurcation method}} studies how the DGA $\mathcal{A}(\Lambda_t)$ varies with $t \in [-,+].$
Generically, there are two events that occur at isolated $t:$ a pair of Reeb chords can appear or disappear ({\bf{birth/deaths}}); an isolated holomorphic curve can appear ({\bf{handle-slide disk}}), and via one-parameter Gromov compactness, can change the moduli spaces which define the differential.
Each birth/death induces on the DGA a stabilization/destabilization combined with possible filtered tame automorphisms, while each handle-slide disk induces a filtered tame automorphism \cite{EES05b, Ch}. So the isotopy overall induces a sequence of (filtered) {\bf{stable-tame isomorphisms}}.

A stable-tame isomorphism is conjecturally stronger than a homotopically-invertible DGA-morphism.
For example, when making the analogous comparison for Morse chain complex invariance, the bifurcation method preserves the stable-Morse number and certain torsion-based invariances \cite{Damian02, Sullivan02} which the continuation method, a priori, does not. However, due to sensitive gluing analysis, for studying Chekanov-Eliashberg invariance the bifurcation method has only been proved if $(Y, \alpha)=(P \times \R_z, dz - \theta)$ \cite{EES05b, Ch}. This (and the proof of Theorem \ref{thm:Main}) motivates Proposition \ref{prop:Bifurcation}.
The statement is somewhat unwieldy because  a general contact manifold may have infinite Reeb chords with actions arbitrarily close, while we can only analyze events involving finite numbers of birth/deaths and handle-slides. However, it can be roughly formulated in the following (imprecise) way: the Chekanov--Eliashberg algebra in a general contact manifold is invariant under stable-tame isomorphism below any fixed action level.

\begin{prop}[Bifurcation analysis for concordance maps]
\label{prop:Bifurcation}
Let $\{\Lambda_t\}_{t\in[-,+]}$ be a generic Legendrian isotopy between two generic Legendrian submanifolds {{$\Lambda_-$}} and $\Lambda_+$ and denote by $\Phi_{[a,b]}$ the unital DGA morphism induced by the Lagrangian trace of the isotopy $\{\Lambda_t\}_{t \in [a,b]},$ with $[a,b] \subset [-,+]$. Recall that this is an exact Lagrangian cobordism diffeomorphic to a cylinder; see e.g.~Appendix \ref{sec:trace} for its construction.

Fix some number $l>0$ and assume that all birth/deaths in the Legendrian isotopy $\Lambda_t$ are generic, so that none occur precisely at action $l$, while there are finitely many that occur at action strictly less than $l$ at distinct times.

For any  sufficiently fine generic subdivision
$$-=t_1<t_2<\ldots<t_N=+$$
for which each $(t_i,t_{i+1})$ contains at most one birth/death below action $l,$
 the following holds. The restricted DGA-morphism $\Phi_{[t_i,t_{i+1}]}|_{\mathcal{A}^l(\Lambda_{t_{i+1}})}$ can be conjugated to the  algebra map that is defined by mapping to zero any generator involved in a death moment and  canonically  identifying all remaining Reeb chord generators,  where the conjugation is by filtered tame automorphisms of the domain and target.
This means, in particular, that $\Phi_{[t_i,t_{i+1}]}|_{\mathcal{A}^l(\Lambda_{t_{i+1}})}$ is a stable-tame isomorphism of DGAs. 
\end{prop}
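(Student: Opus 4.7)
The plan is to reduce the proposition to a sequence of local analyses on each subinterval $[t_i, t_{i+1}]$. On a sufficiently fine subdivision, each subinterval contains at most one bifurcation event of action below $l$, which is either a handle-slide (an isolated time at which a new rigid holomorphic disk appears in the symplectization $\R \times \Lambda_{t_\ast}$) or a Reeb-chord birth/death tangency; I will treat each case separately, then compose.

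First, I would bound the Reeb-chord action and the $d\alpha$-energy of disks contributing to $\Phi_{[t_i,t_{i+1}]}|_{\mathcal{A}^l(\Lambda_{t_{i+1}})}$. Since $\Phi$ is action-decreasing, only disks whose positive puncture has action less than $l$ appear, and Stokes' theorem bounds their energy by a constant depending only on $l$. An SFT compactness argument using the hypothesis $|\gamma|>1$ for contractible Reeb orbits below action $l$ rules out bubbling into pseudoholomorphic planes asymptotic to closed orbits, so any sequence of such trace disks compactifies only into broken configurations of disks with a single positive Reeb-chord puncture, together with finite-energy disks in the symplectizations over $\Lambda_{t_i}$ and $\Lambda_{t_{i+1}}$.

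Next I would carry out the bifurcation analysis on each subinterval. On a subinterval free of bifurcations, an adiabatic stretching of the trace as $t_{i+1}-t_i$ shrinks shows that the rigid trace disks split into the trivial identification strips together with rigid symplectization disks, and the rigid symplectization contributions assemble into a filtered tame automorphism that, after conjugation, reduces $\Phi$ to the canonical identification of generators. On a subinterval containing a handle-slide moment, a single new rigid disk appears in $\R \times \Lambda_{t_\ast}$ and the trace count produces an additional filtered tame automorphism, exactly as in the algebraic portion of \cite{EES05b, Ch}. On a subinterval containing a birth/death tangency, the two chords $a,b$ involved satisfy $\partial b = a + (\text{higher action terms})$ just before cancellation; a gluing argument analogous to \cite{EES05b, Ch} shows that these chords are sent to zero up to filtered tame conjugation, yielding the asserted (de)stabilization factor.

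The main obstacle I anticipate is the gluing and transversality analysis for the trace moduli spaces in a general contact manifold. In the $1$-jet case $J^1Q$ used in \cite{EES05b, Ch}, the Lagrangian projection to $T^*Q$ provides a concrete finite-dimensional model of disks that makes the handle-slide and birth/death gluings tractable; this tool is not available here. The plan is to work intrinsically in the symplectization with a cylindrical almost complex structure compatible with the trace, and to set up the gluing as a Newton iteration in exponentially weighted Sobolev norms at each Reeb-chord puncture, with the action bound $l$ ensuring that only finitely many asymptotic Reeb chords need to be controlled. Because $|\gamma|>1$ keeps us in the technically favorable regime of pseudoholomorphic disks with a single positive Reeb-chord asymptote, no virtual perturbations are required and the standard gluing scheme from \cite{GDRSurgery} applies. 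Once these analytical inputs are in place, assembling the filtered tame automorphisms and (de)stabilizations from each subinterval gives the stable-tame isomorphism factorization of $\Phi_{[t_i,t_{i+1}]}|_{\mathcal{A}^l(\Lambda_{t_{i+1}})}$ claimed in the proposition.
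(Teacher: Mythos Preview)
Your proposal takes the direct bifurcation route: analyze handle-slides and birth/deaths by tracking rigid disks as $t$ varies and carrying out the corresponding gluing in the symplectization. The paper does something genuinely different. It does \emph{not} redo the bifurcation gluing of \cite{EES05b,Ch} in a general contact manifold; instead it leverages the continuation invariance of \cite{Ekholm}, which is already established in this generality, and then argues \emph{algebraically} that the trace map must be a stable-tame isomorphism. Concretely: Lemma~\ref{lem:ActionBound} shows that for a sufficiently fine subdivision the trace cobordism (and its inverse, and the interpolating family) all have length less than any prescribed $\delta$, so that $\Phi$, $\Psi$, and the chain homotopy $G$ in the relation $\id = \Psi\Phi - (\partial_+ G - G\partial_+)$ all shift action by at most a factor $e^{2\delta}$. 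One then separates three cases by the action spectrum: (1) all chord actions are $\delta$-separated, so the homotopy relation forces the diagonal coefficients $\langle \Phi(x^+),x^-\rangle$ to be units; (2) a birth/death pair $x,y$ is present, so the $2\times 2$ block of $\Psi,\Phi$ vanishes and the homotopy relation forces $\langle \partial_+ x, y\rangle$ to be a unit, reducing to Chekanov's algebraic treatment; (3) two chord actions cross, in which case one introduces an intermediate time where they coincide, shows the off-diagonal differential entry vanishes, and uses that $GL(2,\Z)$ is generated by elementary matrices.

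The practical difference is significant. The gluing you identify as ``the main obstacle'' is precisely what the paper circumvents; the introduction explicitly notes that the bifurcation method has only been established for contactizations because of this sensitive analysis. Your weighted-Sobolev Newton scheme is plausible in outline, but it is not in the literature for the birth/death degeneration in a general $(Y,\alpha)$, and making it precise would be substantial new work rather than a citation. The paper's approach buys you the result with no new gluing at all: the only analytic input beyond \cite{Ekholm} is the length estimate for trace cobordisms (Proposition~\ref{prop:trace}), and the rest is action bookkeeping on the DG-homotopy relation. What your approach would buy, if carried out, is a more geometric picture of the individual bifurcations and possibly finer control on the tame automorphisms; but for the purposes of this paper the algebraic shortcut suffices.
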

\begin{rem}
\label{rem:ClosedOrbits2}
Of course the sub-DGA $\mathcal{A}^l$ is itself not invariant; the restriction $\Phi_{[t_i,t_{i+1}]}|_{\mathcal{A}^l(\Lambda_{t_{i+1}})}$ is e.g.~not necessarily contained inside $\mathcal{A}^l(\Lambda_{t_{i}})$.
\end{rem}
\begin{rem}
Since we do not discuss the virtual perturbation schemes for defining the contact homology algebra for periodic Reeb orbits, we again need to assume that $|\gamma| \ge 2$ holds for any contractible Reeb orbit of length at most $l$ in the above proposition; c.f.~Remark \ref{rem:ClosedOrbits}.
\end{rem}

In \cite{DRS2}, we exploited the bifurcation-type invariance in \cite{EES05b,EES05c} to show that the barcode from persistent homology induced by the action-filtration is continuous with respect to the oscillatory norm in the case of contactizations. The invariance by DG-homotopies from \cite{Ekholm} that holds in general contact manifold also satisfies filtration-preserving properties, but these are a priori not continuously depending on the oscillation alone; see the notion of length introduced in \cite{SabloffTraynor15}. Proposition \ref{prop:Bifurcation} allows us to reprove the results from \cite{DRS2} in the general setting.

In Section \ref{sec:Algebra} we review some algebra and combinatorics of DGAs, mapping cones, and barcodes.
In Section \ref{sec:Geometry}, we prove Proposition \ref{prop:Bifurcation}.
In Section \ref{sec:Rabinowitz}, we introduce a Rabinowitz Floer complex generated by 
Reeb chords between two Legendrians, and study how a certain mapping cone 
of this complex changes when one of the Legendrians isotopes (possibly through the other one). This version of the complex was previously defined by Legout \cite{Legout} in the case of a contactization, and is also related to the Floer homology for Lagrangian cobordisms defined in \cite{Dimitroglou:Cthulhu}. In Section \ref{sec:Proofs}, we use the changing barcodes of these mapping cone complexes to prove Theorem \ref{thm:Main}, Theorem \ref{thm:NonDegen}, and Theorem \ref{thm:Interlink}.
In Section \ref{sec:RP}, we compute the example of Theorem \ref{thm:RP}; see Proposition \ref{prop:RCF}.

\begin{rem}
\label{rem:Oh}
We learned that Oh has posted before us by a couple of weeks a related result \cite{Oh21}: if the Hamiltonian oscillation is less than the length of the shortest Reeb chord between, then the number of Reeb chords is bounded below by the sum of the betti numbers.
This improves one of our earlier results in which we require the upper bound on the oscillation to be less than the length of the shortest Reeb chord, multiplied by the conformal factor of the contact form \cite{DRS1}.
Whereas our prior results (\cite{DRS1, DRS2}) and this paper use versions of Floer theory which have already been established by others, Oh's approach is different in that he establishes the analytical framework for a new theory called Hamiltonian perturbed contact-instantons.
 With this new theory established, his result follows from Chekanov's original argument for Lagrangians \cite{ChekanovDisplacement}. 
\end{rem}

\section{Algebraic preliminaries}
\label{sec:Algebra}
This section is purely algebraic, with no mention of the geometric applications.

\subsection{Filtered DGAs, augmentations and stable tame isomorphisms}
\label{sec:STI}

Let $(\mathcal{A},\partial)$ be either a noncommutative or a graded-commutative
 semifree
  unital DGA over the ground field $\kk.$ 
 Assume $(\mathcal{A},\partial)$ has an action-filtration $\ell \colon \mathcal{A} \to \{-\infty\} \cup \R$, where $\ell(\partial(x)) < \ell(x)$; we say that $\partial$ is (strictly) filtration-decreasing, or action-decreasing. See \cite{DRS2} for more details.  
Suppose $\mathcal{A}$ admits a $\Z_2$-graded augmentation $\varepsilon \colon \mathcal{A} \to \kk$. 
There is an induced  filtration-preserving unital  algebra-automorphism
$ \Psi_{\varepsilon} \colon \mathcal{A} \to \mathcal{A} $
defined by
$$\Psi_{\varepsilon}(a)=a-\varepsilon(a)$$
on the generators whose inverse is defined by
$$\Psi_{\varepsilon}^{-1}(a)=a+\varepsilon(a).$$
In particular, the inverse is also  filtration-preserving,  and these automorphisms conjugate the differential to
$$\partial^{\varepsilon} := \Psi_{\varepsilon} \circ \partial \circ \Psi_{\varepsilon}^{-1}$$
which preserves both word length and action.
In particular, if we write $\partial^{\varepsilon}(a) =  \sum_{i=0} (\partial^{\varepsilon}(a))_i$
as a sum of monomials, then $(\partial^{\varepsilon})_1$  defines  a strictly filtration-decreasing  differential on the graded vector space generated by the DGA generators which is of degree-$(-1)$.

%

A filtered tame automorphism $\Phi: (\mathcal{A}, \partial) \rightarrow (\mathcal{A}, \partial')$ 
 of a semifree DGA $\mathcal{A}$ with preferred basis 
is defined on the generators by
$$\Phi(x) = k x+ \delta_x^y w$$ where 
$k \in \kk$ is a unit, $\delta^y_x$ is the Kronecker-delta, and $w \in \mathcal{A}$ is a word such that $\ell(w) < \ell(y).$

A canonical identification between semifree filtered DGAs with preferred bases is a DGA isomorphism induced by an identification of the generators which preserves the grading and differential, but not necessarily the filtration.

The stabilization {$(\mathcal{B},\partial_{\mathcal{B}})$ of $(\mathcal{A},\partial_{\mathcal{A}})$ is constructed by adding to $\mathcal{A}$ two generators $x,y$ such that $\partial_{\mathcal{B}}(y) = x$ and $\partial_{\mathcal{B}}|_{\mathcal{A}} = \partial_{\mathcal{A}}.$
Note that there is a canonical DGA inclusion $\mathcal{A} \rightarrow \mathcal{B}$ and DGA quotient $\mathcal{B} \rightarrow \mathcal{A}.$

A (filtered) {\bf{stable-tame isomorphism}} (STI) $\Phi \colon (\mathcal{A},\partial_{\mathcal{A}}) \to (\mathcal{B},\partial_{\mathcal{B}})$
is a finite composition of (filtered) tame automorphisms,  (possibly non-filtered) canonical identifications,
stabilizations, and inverse stabilizations.
%
%

Let $\Phi \colon (\mathcal{A},\partial_{\mathcal{A}}) \to (\mathcal{B}, \partial_{\mathcal{B}})$ be a DGA-morphism.
For any augmentation $\varepsilon$ of $\mathcal{B}$ we can define the conjugation
$$ \Phi^\varepsilon :=  \Psi_{\varepsilon} \circ \Phi \circ \Psi_{\varepsilon  \circ \Phi}^{-1} \colon (\mathcal{A},\partial_{\mathcal{A}}^{\varepsilon \circ \Phi}) \to (\mathcal{B},\partial_{\mathcal{B}}^\varepsilon)$$
which satisfies 
$$(\Phi^\varepsilon)_1 (\partial^{\varepsilon \circ \Phi  }_{\mathcal{A}})_1
 = (\partial^{\varepsilon}_{\mathcal{B}})_1(\Phi^\varepsilon)_1 .
$$
If $\Phi$ is a (filtered) STI, then $(\Phi^\varepsilon)_1$ is a finite composition of (filtered) handle-slides, and birth/deaths at the chain level (see Section \ref{ssec:Barcodes}).

\subsection{Mapping cones with filtrations and invariance}

\label{sec:AlgebraInvariance}

Let $(C_{01},d_{01})$ and $(C_{10},d_{10})$ be filtered graded complexes with action filtrations $\ell \colon C \to \R \cup \{-\infty\}$, and strictly 
{{filtration-decreasing}}
 differentials. (The grading subscript is suppressed while the ``01" and ``10" subscripts will be justified in Section \ref{sec:Rabinowitz}.) Moreover, assume that the generators of $C_{01}$ all have actions above some fixed $\gamma \in \R$ while the generators of $C_{10}$ all have actions below $\gamma.$ We write
$$C^{< a} \coloneqq \ell^{-1}(-\infty,a)  \subset C$$
for the sub-complex consisting of chains of action less than $a \in \R$, and denote the quotient complex consisting of chains in the action window $[a,b)$ by $C^{[a,b)} \coloneqq C^{< b}/C^{<a}$.

Let $B \colon C_{01} \to C_{10}$ be a chain map,  which automatically is strictly filtration-decreasing by the above assumptions. For this reason,  we get an induced action filtration on the chain complex given by the mapping cone $(\OP{Cone}(B),d_B)$ which we represent by
$$\left(C_{10} \oplus C_{01},\partial_{\OP{Cone}}=\begin{pmatrix}-\partial_{10} & B \\ 0 & \partial_{01}\end{pmatrix}\right),$$
i.e.~$\partial_{\OP{Cone}}$ is again strictly filtration-decreasing.

Suppose $B' \colon C'_{01} \to C'_{10}$ is another  chain map between filtered complexes. Again we assume that all generators of the domain have action greater than the action of the generators in the target, which means that $B'$ is automatically strictly filtration-decreasing.  A map $f: C \rightarrow C'$ between filtered chain complexes with filtrations $\ell, \ell'$ is said to have {\bf{degree}} $\epsilon \in \R$ if $\ell'(f(x)) \le \ell(x) + \epsilon$ for all $x \in C.$
Assume the following.
\begin{enumerate}[label={(A\arabic*)}, ref=(A\arabic*)]
\item \label{A1} There exist chain maps $\phi_{01} \colon C_{01} \to C'_{01}$ and $\psi_{10} \colon C'_{10} \to C_{10}$ with homotopy inverses $\psi_{01}$ and $\phi_{10}$ with chain homotopies $h_{ij} \colon \psi_{ij}\circ \phi_{ij} \sim \id_{C_{ij}}$ and $k_{ij} \colon \phi_{ij}\circ \psi_{ij} \sim \id_{C'_{ij}}$, where all maps above are of degree $\epsilon$.
\item \label{A2} The map $B$ is chain homotopic to $\psi_{10} B' \phi_{01}$ via a chain homotopy $H \colon B \sim \psi_{10} B' \phi_{01}.$ Note that this homotopy automatically has negative degree, i.e.~it is strictly filtration-decreasing.
\end{enumerate}
{(Note in \ref{A1} that the homotopies have degree $\epsilon$ instead of $2\epsilon$ as is common in Hamiltonian Floer theory literature, for example \cite[Definition 8.1]{UsherZhang16}.)}
Then we have a homotopy commutative square
$$\xymatrix{
C_{01} \ar[r]^{B} \ar[d]_{\phi_{01}}  \ar@{-->}[dr]_{h} & C_{10} \ar[d]^{\phi_{10}} \\
C'_{01} \ar[r]_{B'} &  C'_{10} 
}
$$
where $h=\phi_{10}H+k_{10} B'\phi_{01}$ is the induced chain homotopy between $\phi_{10} B$ and $B'\phi_{01}$, which thus is a map of degree $\epsilon$. Similarly, there is a chain homotopy $h' \colon B \psi_{01} \sim  \psi_{10}B'$ given by $h'=H\psi_{01}+\psi_{01}B'k_{01}$ that makes the square in the following diagram
$$\xymatrix{
C'_{01} \ar[r]^{B'} \ar[d]_{\psi_{01}}  \ar@{-->}[dr]_{h'} & C'_{10} \ar[d]^{\psi_{10}} \\
C_{01} \ar[r]_{B} &  C_{10} 
}
$$
commute up to homotopy. Both $h$ and $h'$ are maps of degree 
{{$\epsilon$.}}

It follows that there are induced chain maps of the cones
$$\xymatrix{
 C_{10} \ar[r] \ar[d] & \OP{Cone}(B) \ar@{-->}[d]_f \ar[r] & C_{01} \ar[d]  \\
 C'_{10} \ar[r] \ar[d] & \OP{Cone}(B') \ar@{-->}[d]_g \ar[r] & C'_{01} \ar[d] \\
  C_{10} \ar[r]  & \OP{Cone}(B) \ar[r] & C_{01}   \\
}
$$
given by $f=\begin{pmatrix}\phi_{10} & h \\ 0 & \phi_{01}\end{pmatrix}$ and
$g=\begin{pmatrix}\psi_{10} & h' \\ 0 & \psi_{01}\end{pmatrix}$
of degree $\epsilon$. By the five-lemma, the induced homology maps are isomorphisms. 

\begin{lem}
  \label{lem:smalldeg}
  The maps $f \circ g$ and $g\circ f$ are each chain homotopic to  automorphisms of filtered chain complexes  (i.e.~degree zero chain maps with degree zero inverses), via chain homotopies that are of degree $\epsilon$.
\end{lem}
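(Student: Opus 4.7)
By symmetry I focus on $g\circ f$; the case $f\circ g$ is identical. The plan is to exhibit an explicit chain homotopy $\mathcal{H}\colon \OP{Cone}(B)\to\OP{Cone}(B)$ of filtration degree $\epsilon$ satisfying $d_{\OP{Cone}}\mathcal{H}+\mathcal{H}d_{\OP{Cone}} = g\circ f - \Theta$ for some filtered chain automorphism $\Theta$ of degree $0$.

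First, I would compute
\[
g\circ f = \begin{pmatrix}\psi_{10}\phi_{10} & \psi_{10}h + h'\phi_{01}\\ 0 & \psi_{01}\phi_{01}\end{pmatrix},
\]
and observe that every block of $g\circ f - \id$ has filtration degree \emph{strictly} less than $\epsilon$. On the diagonal, $\psi_{ii}\phi_{ii}-\id$ equals a coboundary of the degree-$\epsilon$ chain homotopy $h_{ii}$ under the strictly filtration-decreasing differential $\partial_{ii}$, so it has degree $<\epsilon$. The off-diagonal block $\psi_{10}h+h'\phi_{01}\colon C_{01}\to C_{10}$ is automatically strictly filtration-decreasing, since generators of $C_{01}$ have action above $\gamma$ while those of $C_{10}$ have action below $\gamma$.

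Next, I would take the candidate
\[
\mathcal{H} = \begin{pmatrix}\pm h_{10} & H_{12}\\ 0 & \pm h_{01}\end{pmatrix}
\]
(with signs arranged so that the diagonal of $d_{\OP{Cone}}\mathcal{H}+\mathcal{H}d_{\OP{Cone}}$ matches the diagonal of $g\circ f-\id$ automatically). The off-diagonal chain-homotopy condition then reduces to a single equation $-\partial_{10}H_{12}+H_{12}\partial_{01} = R$ for an explicit linear combination $R$ of $\psi_{10}h$, $h'\phi_{01}$, $Bh_{01}$, and $h_{10}B$. Substituting the definitions of $h$ and $h'$ and invoking the chain-homotopy identities for $h_{ii}, k_{ii}, H$ together with the chain-map relation $\partial_{10}B=B\partial_{01}$, one checks that $R$ is a cocycle in the hom-complex (with the cone-differential sign convention on $C_{10}$) and exhibits a primitive $H_{12}$ built from combinations such as $h_{10}H$ and $Hh_{01}$, plus further terms involving $k_{ii}$ as needed.

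The hard part is the filtration-degree bookkeeping: a priori, a composition of two degree-$\epsilon$ maps has degree $2\epsilon$, so it is not immediate that $H_{12}$ stays within degree $\epsilon$. The critical observation is that every composition appearing in $H_{12}$ contains at least one strictly filtration-decreasing factor---either the homotopy $H$ itself, or a factor such as $B$ or $B'$ which crosses the action-window gap $\gamma$---so the excess degree is absorbed and $H_{12}$ remains of degree at most $\epsilon$. Any residual degree-$0$ off-diagonal left over from the equation yields the required automorphism $\Theta = \id + \begin{pmatrix}0 & \Theta_{12}\\ 0 & 0\end{pmatrix}$: the chain-map condition reduces to $\partial_{10}\Theta_{12}+\Theta_{12}\partial_{01}=0$, and the inverse $\id-\begin{pmatrix}0 & \Theta_{12}\\ 0 & 0\end{pmatrix}$ is likewise a degree-$0$ chain map, so $\Theta$ is indeed a filtered chain automorphism of degree $0$ with degree-$0$ inverse.
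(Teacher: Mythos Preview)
Your overall logical structure is sound, and you do make the key observation---that any map $C_{01}\to C_{10}$ is automatically strictly filtration-decreasing because of the action-gap at $\gamma$---but you fail to exploit it fully and end up working much harder than necessary. The paper simply takes the diagonal homotopy
\[
\mathcal{H}=\begin{pmatrix}h_{10}&0\\0&h_{01}\end{pmatrix}
\]
with \emph{no} off-diagonal term. Then $g\circ f-(\partial_{\OP{Cone}}\mathcal{H}+\mathcal{H}\partial_{\OP{Cone}})$ has the form $\begin{pmatrix}\id_{C_{10}}&K\\0&\id_{C_{01}}\end{pmatrix}$ for some $K\colon C_{01}\to C_{10}$, which is automatically of degree~$0$ by the action-gap. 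This matrix is a chain map because it is chain homotopic to the chain map $g\circ f$, and its inverse $\begin{pmatrix}\id&-K\\0&\id\end{pmatrix}$ is likewise of degree~$0$. Done.

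In other words: your entire ``hard part''---constructing $H_{12}$, verifying $R$ is a cocycle, finding a primitive, and the $2\epsilon$-versus-$\epsilon$ bookkeeping---is unnecessary. You already observed that the off-diagonal of $g\circ f$ is automatically degree~$\le 0$; the same holds for the off-diagonal of $\partial_{\OP{Cone}}\mathcal{H}+\mathcal{H}\partial_{\OP{Cone}}$ with $H_{12}=0$, and hence for their difference $K$. There is no need to kill any part of $K$ by a clever choice of $H_{12}$; it can all be absorbed into the automorphism $\Theta$. Your final paragraph is in fact the whole proof, once you set $H_{12}=0$ and note that $\Theta$ is a chain map because it differs from $g\circ f$ by $\partial_{\OP{Cone}}\mathcal{H}+\mathcal{H}\partial_{\OP{Cone}}$.
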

\begin{proof}
  We show the statement for $g\circ f$; the argument for $f \circ g$ is completely analogous.

There is a chain homotopy $\begin{pmatrix}h_{10}& 0 \\ 0 & h_{01} \end{pmatrix}$ of degree $\epsilon$ from $f\circ g$ to a chain automorphism of the form
  $$ \begin{pmatrix}\id_{C_{10}}& K \\ 0 & \id_{C_{01}} \end{pmatrix}.$$
Since the entry $K \colon C_{01} \to C_{10}$ is of non-positive degree, the matrix is of degree zero. The matrix is a chain map since it is chain homotopic to $f \circ g.$ This chain map-property translates to the fact that {{$K$}} is a chain map (that performs a grading shift by $+1$). Note that the inverse map is
  $$ \begin{pmatrix}\id_{C_{10}}& -K \\ 0 & \id_{C_{01}} \end{pmatrix}.$$
  which hence also is of degree zero.

  \end{proof}

\subsection{Simple equivalences from small degree homotopy equivalences}
\label{sec:homotopyinvariance}

Next we relate the homotopy equivalences of small degree as above with the invariance of bifurcation-type that our previous work \cite{DRS2} was based on.

Recall that a basis $\{\mathbf{e}_i\}$ of the filtered complex $C$ is {\bf compatible} with the filtration if there is an action function $\ell(\mathbf{e}_i) \in \R$ defined on the basis, so that $c \in C^{<a}$ if and only if $c$ can be written as a sum of basis elements of action less than $a$; see \cite[Section 2.1]{DRS2}). We say that a complex is {\bf $\delta$-gapped} if two different basis elements in a compatible basis have action values that  either coincide, or differ by at least $\delta>0$.

  \begin{lem}
    \label{lem:se}
    Consider two filtered complexes $C$ and $C'$, where $C$ is $\delta$-gapped and satisfies the property that each action level has at most finitely many generators, and for which the following is satisfied:
    \begin{enumerate}
  \item $C$ and $C'$ admit bases compatible with the filtration whose elements are in a bijection $x\mapsto x'$ under which the action satisfies $\ell(x)-\ell'(x') \le \epsilon$; and
  \item There are chain maps $\phi \colon C \to C'$ and {{$\psi \colon C' \to C$}} which both are of degree $\epsilon>0$, where $\psi\phi$ and $\phi\psi$ are homotopic to  automorphisms of filtered chain complexes  via chain homotopies of degree $\epsilon$.
\end{enumerate}
If $\delta>4\epsilon>0$, then $\phi$ is a isomorphism with inverse $\psi$.

If we endow the complexes with bases that are compatible with the filtrations, ordered in decreasing action, with the additional assumption that two different elements have distinct action values, then $\phi$ is upper-triangular with units on the diagonal.
\end{lem}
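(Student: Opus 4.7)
\textbf{Proof plan for Lemma \ref{lem:se}.}

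The plan is to exploit the gap condition $\delta>4\epsilon$ to upgrade the approximate, homotopical information of hypothesis (2) into strict algebraic statements about the matrices of $\phi$ and $\psi$ in the compatible bases. The key principle is that any composition of at most two degree-$\epsilon$ maps applied to a basis element $x$ produces an element of action at most $\ell(x)+2\epsilon$, at which point the $\delta$-gap on $C$ forces a clean dichotomy: the resulting basis contributions either share the action $\ell(x)$ exactly or sit at action $\le\ell(x)-\delta$. This eliminates all mixing between nearby filtration levels, and that is ultimately what makes the matrix of $\phi$ (block-)upper-triangular.

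First I would analyze the matrix of $\phi$. If $\phi(x)=\sum c_iy_i'$ then $\ell'(y_i')\le\ell(x)+\epsilon$ by degree, and via the bijection $y_i'\leftrightarrow y_i$ one gets $\ell(y_i)\le\ell(x)+2\epsilon$; the $\delta$-gap with $\delta>2\epsilon$ then forces $\ell(y_i)\le\ell(x)$, and the same applies to $\psi$. Under the distinct-action assumption this means $\phi(x)=c_xx'+(\text{strictly lower action})$ and $\psi(x')=d_xx+(\text{strictly lower})$. Composing, the lower-action terms $y'$ appearing in $\phi(x)$ satisfy $\ell(y)\le\ell(x)-\delta$ by the gap, so $\psi(y')$ has action at most $\ell(y)+2\epsilon\le\ell(x)-\delta+2\epsilon<\ell(x)$; hence $\psi\phi(x)=c_xd_x\,x+(\text{strictly lower action})$.

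Next I would compute the same leading component from the identity $\psi\phi=A+\partial h+h\partial$. Since $A$ is a filtered chain automorphism with filtered inverse, its induced map on each associated-graded piece is an isomorphism, so in the distinct-action setting $A(x)=a_x\,x+(\text{strictly lower})$ with $a_x$ a unit. The homotopy term contributes nothing at action $\ell(x)$: for $\partial h(x)$, any basis element $z$ appearing in $h(x)$ satisfies $\ell(z)\le\ell(x)+\epsilon$, and an $x$-component of $\partial z$ would require $\ell(z)>\ell(x)$, hence $\ell(z)\ge\ell(x)+\delta$ by the gap, impossible since $\delta>\epsilon$. For $h\partial(x)$, every basis element in $\partial x$ has action $\le\ell(x)-\delta$ by the gap, so $h$ sends it to action $\le\ell(x)-\delta+\epsilon<\ell(x)$. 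Comparing the two computations gives $c_xd_x=a_x$; since $a_x$ is a unit, so are $c_x$ and $d_x$.

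This already proves the second statement: $\phi$ is upper-triangular with units on the diagonal in the ordered distinct-action basis, hence bijective, and the analogous argument shows $\psi$ has the same structure. For the first statement, without the distinct-action hypothesis, the same sequence of estimates shows $\phi$ (and $\psi$) is block-upper-triangular with respect to the action-level decomposition and with leading block a chain isomorphism on each level, so finiteness per action level plus a level-by-level inductive inversion yields that $\phi$ is a chain isomorphism; $\psi$ is then the inverse in the sense that $\psi\phi$ and $\phi\psi$ are homotopic to filtered automorphisms with unit diagonals, so $\psi$ agrees with $\phi^{-1}$ to leading order at every action level. The main obstacle is purely bookkeeping: tracking the accumulated $\epsilon$-slack through $\psi\circ\phi$, $\partial\circ h$, and $h\circ\partial$, and confirming that $\delta>4\epsilon$ safely dominates the worst case among the constants $2\epsilon$, $\epsilon$ and $3\epsilon$ appearing in the individual estimates above.
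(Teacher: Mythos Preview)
Your argument is correct and rests on the same key observation as the paper: the $\delta$-gap forces the degree-$\epsilon$ homotopy to be filtration-nonincreasing, so $\partial h+h\partial$ is strictly filtration-decreasing and contributes nothing at the leading action level. The paper packages this more succinctly. Rather than tracking individual matrix entries through the bijection $x\leftrightarrow x'$, it simply observes that $\psi\phi$, being a filtered automorphism plus a strictly filtration-decreasing correction, is itself a filtered automorphism of $C$; injectivity of $\phi$ is then immediate, and surjectivity comes from a dimension count on the finite action windows $C^{[\ell(x)-3\epsilon,\ell(x)+3\epsilon)}\to C'^{[\ell(x)-3\epsilon,\ell(x)+3\epsilon)}$, which are equidimensional by hypothesis~(1). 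Your route---computing the diagonal entries $c_x$ explicitly and matching $c_xd_x$ against the unit diagonal of the filtered automorphism $A$---has the advantage of yielding the upper-triangular statement in the same breath, at the cost of invoking the basis bijection twice (once for $\phi$, once for $\psi$) and carrying a bit more $\epsilon$-bookkeeping. Both routes share the same minor loose ends: neither quite pins down $\psi=\phi^{-1}$ exactly (only that $\psi\phi$ is a filtered automorphism), and both implicitly treat the inequality in hypothesis~(1) as two-sided.
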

\begin{proof}
By the assumptions, we can write
  $$\psi\phi=\Phi+\partial K+K\partial$$
  where $\Phi$ is  an automorphism of filtered chain complexes.  By the assumption that $C$ is $\delta$-gapped, and since $K$ is of degree degree $\epsilon<\delta/4$, we conclude that $K$ is in fact filtration preserving. Hence $\partial K +K\partial$ is strictly filtration-decreasing. It follows by a standard fact that $\psi\phi$ itself is an automorphism of filtered chain complexes.

  The map $\phi$ is injective by the above. 
 It thus suffices to show that $\phi$ is surjective since, in that case, $\psi=\phi^{-1}$.

  Note that, for any basis element $x \in C$ in a basis compatible with the
 filtration,  the map induced by quotient and restriction 
$$\phi \colon C^{[\ell(x)-3\epsilon,\ell(x)+3\epsilon)} \to C'^{[\ell(x)-3\epsilon,\ell(x)+3\epsilon)}$$
 is a map between equidimensional vector-spaces 
  by the assumption on the action spectrum of the involved complexes. Since $\psi\phi$ is an automorphism of filtered complexes, together with the assumption that $C$ is $\delta$-gapped, we deduce that the above map on the quotient is injective as well, and thus surjective. Consequently the map $\phi$ itself is surjective, as sought.
  \end{proof}

  \begin{lem}[Characterization of a birth/death]
        \label{lem:bd}
Consider two filtered complexes $C$ and $C'$, and assume that $C^{[a+\delta,a+3\delta)}=C^{[a,a+4\delta)}$ are both two-dimensional, while $C'^{[a,a+4\delta)}=0$. Assume that there exist chain maps $\phi \colon C \to C'$ and $\psi \colon C' \to C$  which both are of degree $\epsilon>0$, where $\psi\phi$ and $\phi\psi$ are homotopic to  automorphisms of filtered chains complexes  via chain homotopies of degree $\epsilon$. If $\delta>\epsilon>0$, then $C^{[a+\delta,a+3\delta)}$ 
is a complex generated by $x,y$ with $\partial x = k y$ for some unit $k.$
\end{lem}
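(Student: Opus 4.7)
My plan is to pass to the filtered subquotient $Q := C^{<a+3\delta}/C^{<a}$ and exploit the hypothesis $C'^{[a,a+4\delta)} = 0$ to force the induced differential on $Q$ to be non-trivial, after which the desired structure is read off from strict action-decrease. The equality $C^{[a+\delta,a+3\delta)} = C^{[a,a+4\delta)}$ is equivalent to the vanishing of the boundary bands, i.e.~$C^{<a} = C^{<a+\delta}$ and $C^{<a+3\delta} = C^{<a+4\delta}$, so $Q$ is canonically identified with the two-dimensional window complex $C^{[a+\delta,a+3\delta)}$. Likewise $C'^{<a} = C'^{<a+4\delta}$ follows from $C'^{[a,a+4\delta)} = 0$.

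Next I would use the degree-$\epsilon$ bounds together with $\epsilon < \delta$ to check that $\psi \circ \phi$ descends to the zero map on $Q$, while the chain homotopy $K$ still descends. Indeed, for $c \in C^{<a+3\delta}$ one has $\phi(c) \in C'^{<a+3\delta+\epsilon} \subset C'^{<a+4\delta} = C'^{<a}$, and then $\psi\phi(c) \in C^{<a+\epsilon} \subset C^{<a+\delta} = C^{<a}$. A parallel check gives $K(C^{<a}) \subset C^{<a}$ and $K(C^{<a+3\delta}) \subset C^{<a+3\delta}$, so $K$ descends to $\bar K \colon Q \to Q$. The filtered automorphism $\Phi$ to which $\psi\phi$ is homotopic descends to a chain automorphism $\bar\Phi$ of $(Q, \bar\partial)$, and the chain-homotopy identity $\Phi - \psi\phi = \partial K + K\partial$ descends (up to sign) to $\bar\Phi = \bar\partial \bar K + \bar K \bar\partial$ on $Q$.

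The key algebraic consequence is then immediate: if $\bar\partial \equiv 0$ on $Q$, the right-hand side above vanishes, contradicting the fact that $\bar\Phi$ is an automorphism of the two-dimensional space $Q$. Hence $\bar\partial \not\equiv 0$. I would then choose basis elements $\bar x_1, \bar x_2 \in Q$ coming from a basis of $C$ compatible with the filtration, with actions $\ell_1 \le \ell_2$ in $[a+\delta, a+3\delta)$. Strict action-decrease of $\bar\partial$ rules out $\ell_1 = \ell_2$ (otherwise $\bar\partial \bar x_i$ would have action strictly below $\ell_1 = \ell_2$, hence vanish in $Q$, forcing $\bar\partial = 0$), so $\ell_1 < \ell_2$. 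The same strict action-decrease then forces $\bar\partial \bar x_1 = 0$ and $\bar\partial \bar x_2 = k \bar x_1$ for some scalar $k$, which must be a unit by the non-triviality of $\bar\partial$. Setting $x := \bar x_2$ and $y := \bar x_1$ gives the claim.

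I do not anticipate a serious obstacle: the whole argument is a short diagram chase with the action filtration, and the numerical hypothesis $\delta > \epsilon$ is used precisely to guarantee both that $\psi\phi$ sends $C^{<a+3\delta}$ into $C^{<a}$ and that $K$ descends to the quotient $Q$. The only minor point requiring care is tracking the sign in the descended chain-homotopy identity, which has no bearing on the qualitative conclusion that $\bar\partial \neq 0$.
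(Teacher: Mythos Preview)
Your proposal is correct and follows essentially the same approach as the paper. The paper's proof is a two-sentence sketch: it asserts that $\phi$ induces a homotopy equivalence $C^{[a,a+4\delta)} \to C'^{[a,a+4\delta)}=0$, hence $C^{[a,a+4\delta)}$ is acyclic, and a two-dimensional acyclic complex has the stated form. Your argument unpacks precisely the step the paper elides---namely, why the degree-$\epsilon$ homotopy data descend to the subquotient $Q$ and force $\bar\partial\neq 0$ (equivalently, acyclicity)---using the boundary-band vanishing $C^{<a}=C^{<a+\delta}$, $C^{<a+3\delta}=C^{<a+4\delta}$ together with $\epsilon<\delta$.
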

\begin{proof}
Since the map $\phi \colon C^{[a,a+4\delta)} \to C'^{[a,a+4\delta)}=0$ induced by quotient and surjection  is a homotopy equivalence, 
$C^{[a,a+4\delta)}$ is acyclic. The only two-dimensional acyclic complexes are the ones described above.
\end{proof}

\subsection{Barcodes}
\label{ssec:Barcodes}

We sketch without details the modified barcode theory done in \cite[Section 2]{DRS2}.

For $t \in \R,$ $C(t)^{b_t}_{a_t}$ be a filtered complex with filtration action $\ell$ taking values in 
$[a_t, b_t) \subset \R$ and $-\infty.$
A {\bf{piecewise continuous}} (PWC) {\bf{family}} of such filtered complexes, parameterized by $t,$ is characterized by the following properties.
\begin{itemize}
\item The endpoints of the action window $[a_t,b_t)$ vary continuously with $t.$
\item There exists a discrete set of $t_1 < \cdots < t_N$ such that during any component $I \subset \R \setminus \{t_1, \ldots, t_N\}$ there are canonically identified (for different $t \in I$) generators of the complexes which are compatible with the action.
\item The action of each such generator is continuous and almost everywhere differentiable with respect to $t \in I.$
\item For each $t \in I,$ the differential strictly decreases the action.
\item For each $t \in \{t_1, \ldots, t_N\}$ the chain complex undergoes at most one of the following possible ``simple bifurcations" 
\begin{itemize}
\item The algebraic equivalent of a Morse handle-slide
\item The algebraic equivalent of a Morse birth/death
\item An entrance (resp. exit) of one generator into (resp. from) at either the top or bottom  of the action window.
\end{itemize}
\end{itemize}

We continue to use \cite[Section 2]{DRS2} in defining barcodes, but for the equivalent definition based on normal forms, see \cite[Section 2.1]{PolterovichRosenSamvelyanZhang} and \cite[Definition 6.2]{UsherZhang16}.
A barcode is a finite collection of ``bars" $[s,e)$ where the endpoint $e$ might be $+\infty.$
Let $\phi_{c_0,c_1} \colon H(C(t)^{c_0}_{a_t}) \to H(C(t)^{c_1}_{a_t})$ be induced from the inclusion $C(t)^{c_0}_{a_t} \hookrightarrow C_*(t)^{c_1}_{a_t}$ where $a_t \le c_0 \le c_1 \le b_t.$
The {\bf barcode of the complex} $(C(t)^{b_t}_{a_t},\partial_t)$ is the barcode uniquely characterized by the following properties.
\begin{itemize}
\item The number of bars with starting point $s$ is equal to the dimension of the quotient
$$ \OP{coker}( \phi_{s,s+\epsilon}) = H(C(t)^{s+\epsilon}_{a_t},\partial_t)/\im\,\phi_{s,s+\epsilon}$$
where $\epsilon>0$ is any sufficiently small number.
\item The number of bars with starting point $s$ that persist at action level $l\ge s$ is equal to the dimension of the subspace
$$ [\phi_{s+\epsilon,l+\epsilon}](\OP{coker}( \phi_{s,s+\epsilon})) \subset H(C(t)^{l+\epsilon}_{a_t},\partial_t)/\im\,\phi_{s,l+\epsilon}$$
where $\epsilon>0$ is any sufficiently small number and where the map
$$[\phi_{s+\epsilon,l+\epsilon}] \colon \OP{coker}( \phi_{s,s+\epsilon}) \to H(C(t)^{l+\epsilon}_{a_t},\partial_t)/\im\,\phi_{s,l+\epsilon} $$
is induced by descending $\phi_{s+\epsilon,l+\epsilon}$ to the quotients.
\end{itemize}

\begin{prop}[Proposition 2.7 of \cite{DRS2}]
\label{prop:Barcode}
Consider a PWC family of filtered complexes with action window, $C(t)^{b_t}_{a_t}.$ 
When the complex undergoes no such bifurcation, the barcode undergoes a continuous change of action levels for its starting and endpoints.
At the bifurcations the barcode undergoes the following corresponding changes.
\begin{itemize}
\item {\bf Handle-slide: } The barcode is unaffected.
\item {\bf Birth/death: } When two generators $x,y$ undergo a birth/death, then a bar connecting $\ell(x)$ to $\ell(y)$ is added to/removed from the barcode. (The bar is not present at the exact time of the birth/death, but immediately after/before it is visible and of arbitrarily short length.)
\item {\bf Exit below: } A generator slides below the action level $a_t$ at time $t.$ If the uniquely determined 
bar which starts at the action level of that generator is of finite length, then that bar gets replaced with a bar of infinite length whose starting point is located at the same action level as the endpoint of the original bar. If the bar has infinite length, then it simply disappears from the barcode.
\item {\bf Entry below: } This is the same as a exit below but in backwards time.
\item {\bf Exit above: } A generator slides beyond the action level $b_t$ at time $t.$ There is a uniquely determined bar which either ends or starts at the action level of that generator. In the first case, the bar gets replaced with one that has the same starting point but which is of infinite length. In the second case, when the bar necessarily is infinite, then that bar simply disappears from the barcode.
\item {\bf Entry above: } This is the same as an exit above, but in backwards time.
\end{itemize}
\end{prop}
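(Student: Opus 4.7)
The strategy is to apply the standard normal-form decomposition for a finite-dimensional filtered chain complex over a field referenced in the excerpt: every such complex splits, uniquely up to isomorphism, as a direct sum of one-dimensional summands $\langle z\rangle$ with $\partial z=0$ (each contributing an infinite bar starting at $\ell(z)$) and two-dimensional acyclic summands $\langle x,y\rangle$ with $\partial x = y$ and $\ell(x) > \ell(y)$ (each contributing a finite bar $[\ell(y),\ell(x))$). Such a decomposition is produced by Gauss reduction on the strictly upper-triangular matrix of $\partial$ in a basis ordered by decreasing action. Unpacking the cokernel-based definition of the barcode given in the excerpt shows that the bars of $(C(t)^{b_t}_{a_t},\partial_t)$ are precisely the portions of these elementary intervals lying in the window $[a_t,b_t)$, truncated to an infinite bar or dropped entirely when an endpoint escapes.

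Given this dictionary, the first claim — continuous motion of bar endpoints between bifurcations — follows because the pivot pattern used to produce the normal form is locally stable under small perturbations of $\partial$ in the preferred basis, as long as no pivot entry vanishes and no two generators exchange action. By the PWC hypothesis, each such degeneracy occurs at an isolated time and is one of the listed simple bifurcations, so between bifurcations the normal form can be chosen to vary continuously and each bar endpoint tracks the action of the generator attached to it.

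For the bifurcation cases I would proceed as follows. A handle-slide is a filtered tame automorphism $\Phi$ conjugating $\partial$; since $\Phi$ preserves each $C^{<c}$ and intertwines the inclusion maps $\phi_{c_0,c_1}$, every cokernel used to define the barcode is unchanged. For a birth/death, Lemma \ref{lem:bd} (or a direct change of basis) shows that the newly appearing or disappearing pair forms an acyclic summand $\langle x,y\rangle$ with $\partial x = k y$ for a unit $k$, whose contribution is the asserted finite bar $[\ell(y),\ell(x))$, degenerating to zero length at the event instant. For entries and exits at the window boundaries, I would fix the normal form just before the event, locate the unique elementary summand containing the exiting generator $g$ (which can be taken as one of the normal-form basis elements by ordering the Gauss reduction from the relevant end of the action range), and enumerate the sub-cases: an exit below of $g$ in a one-dimensional summand causes its infinite bar to vanish, while an exit below of $g = y$ in an acyclic pair leaves $x$ as an unkilled cycle and so converts the finite bar $[\ell(y),\ell(x))$ into the infinite bar $[\ell(x),\infty)$; the exit-above cases are symmetric with $x$ and $y$ swapped, and the entry cases are the time reversals. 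Each sub-case matches the corresponding clause of the proposition verbatim.

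The main obstacle is establishing the dictionary between the cokernel-based barcode definition and the normal-form decomposition in the presence of a moving action window, and confirming that one may choose the normal form so that an exiting generator is itself a normal-form basis element (rather than an uncontrolled linear combination of preferred generators). Both issues are handled by elementary linear algebra over a field, but some care is needed to keep track of actions under the reduction and to ensure uniqueness of the summand containing a given exiting generator. Once this dictionary is in place, each of the six bifurcation cases reduces to a direct computation in a two- or three-term subcomplex.
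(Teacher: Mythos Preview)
The paper does not supply its own proof of this proposition: it is stated as a citation of \cite[Proposition~2.7]{DRS2}, with only the surrounding definitions sketched. So there is no in-paper argument to compare against.

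Your approach via the normal-form (Barannikov/Usher--Zhang) decomposition is the standard one and is correct in outline; it is also the approach taken in \cite{DRS2}. The dictionary you set up---each one-dimensional summand contributing an infinite bar, each two-dimensional acyclic summand contributing a finite bar, with truncation at the window boundaries---is exactly what makes the exit/entry cases transparent. Your handling of the four window-boundary cases is right, and your observation that the exiting generator (being of extremal action) can always be taken as a normal-form basis element is the key point that makes the ``uniquely determined bar'' language in the statement meaningful.

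One small caveat: your justification of continuity between bifurcations appeals to local stability of the Gauss-reduction pivot pattern under perturbation of $\partial$. In the PWC setup as defined here, the generators are canonically identified on each interval $I$ and only their actions move; the cleaner argument is that the barcode is determined by the ranks of the inclusion-induced maps $\phi_{c_0,c_1}$, and these ranks are locally constant while the action thresholds at which they jump move continuously with the $\ell$-values. This avoids any worry about whether $\partial$ itself is varying. Otherwise your proposal is complete.
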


\subsection{A piecewise continuous family of complexes from small-degree homotopy equivalences}

In order to investigate the continuous dependence of the barcodes in relation to the invariance properties established in Section \ref{sec:AlgebraInvariance}, we need to relate invariance under small degree homotopy equivalence as in Lemma \ref{lem:smalldeg} to the bifurcation-type invariance that Proposition \ref{prop:Barcode} above is based upon.

Let $C(t)$, $t \in [0,1]$ be a family of finite-dimensional filtered complexes with choices of compatible bases elements. We assume that all basis elements vary smoothly with $t$ except that there are finitely times $t_1<\ldots<t_N$ at which there is a unique birth/death moment. Roughly speaking,  at these moments precisely two basis elements of the same action either appear or disappear.  The next paragraph gives the precise characterization of a birth/death.

Since we require the differential to be strictly filtration-decreasing, the two basis elements that undergo a birth/death at $t_i$ are necessarily missing from the complex $C(t_i)$.  However, in the case of a birth (resp. death) two generators for $t>t_i$ (resp $t<t_i$) are assumed to  have actions that extend continuously to $t=t_i$, such that the extensions moreover attain the same action value at $t=t_i$. (Note that, in particular, the action of any basis element is bounded in the family, and there is a global bound on the dimension of the complexes $C(t)$ in the family.)
For simplicity we make the additional assumptions that, at each birth/death moments $t=t_i$, all elements in the compatible basis have distinct action values and, moreover, their action values different from the action of the (continuous extension of the) birth/death pair. In addition, we assume that there is a finite set of times when the action values of a compatible basis are not distinct.

 In order to simplify the notation, we will now assume that the finite number of times when the action spectrum is not injective, as well as the birth/death moments, all occur at rational times $t \in \Q \cap [0,1]$.

It is worth to stress that, at this moment, we have not yet made any assumptions on how the differential of the complexes $C(t)$ varies; we are simply prescribing how their compatible bases depend on $t$. Under the further assumptions of the next result, Proposition \ref{prop:PWC}, we establish an invariance result for this family of complexes; this is in fact one of the main goals of this section.

\begin{prop}
\label{prop:PWC}
Let $C(t)$ be a family of complexes as above that satisfies the following additional requirement. 
For all $\epsilon>0$, all sufficiently large $N \gg 0$ and all $i \in \{0,\ldots,N-1\},$ there are chain maps
  \begin{gather*}
    \phi_i \colon C(i/N)  \to C((i+1)/N) \:\:\text{and}\:\:  \psi_i \colon C((i+1)/N)  \to C(i/N)
  \end{gather*}
of degree $\epsilon>0$,  where $\psi_i\phi_i$ and $\phi_i\psi_i$ are homotopic to  automorphisms of filtered chain complexes  via chain homotopies of degree $\epsilon$. Then, for  $N \gg 0$ is sufficiently large, there exists a piecewise continuous family of complexes $D(t)$, that admit isomorphisms $C(t) \cong D(t)$ of filtered vector spaces, that moreover are chain maps for all $t=i/N$, $i =0,1\ldots,N$.
\end{prop}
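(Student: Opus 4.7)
The plan is to construct $D(t)$ on each subinterval $[i/N, (i+1)/N]$ by realizing the transition from $(C(i/N), \partial_{i/N})$ to $(C((i+1)/N), \partial_{(i+1)/N})$ as a finite sequence of simple bifurcations on a single underlying filtered vector space, and then to glue these local constructions at the sample times $t=i/N$.

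For the parameter choice, I would use the global bound on $\dim C(t)$ and the continuous variation of the action values of a compatible basis to extract some $\delta>0$ which bounds below the action-gap of the complexes $C(i/N)$ away from the finitely many exceptional times (birth/death moments and action-coincidence moments, all rational by assumption). Then fix any $\epsilon < \delta/4$, and invoke the hypothesis to produce, for all sufficiently large $N$, chain maps $\phi_i, \psi_i$ of degree $\epsilon$ with the required homotopy properties. Further enlarge $N$ so that each $[i/N,(i+1)/N]$ contains at most one birth/death of the $C(t)$-family, and so that the endpoints $t=i/N$ avoid the exceptional times.

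On a subinterval containing no birth/death, Lemma \ref{lem:se} applies and shows that $\phi_i$ is a filtered isomorphism which is upper-triangular with units on the diagonal in the preferred bases ordered by decreasing action. I would factor $\phi_i$ as a finite product of elementary upper-triangular matrices (each differing from the identity in a single off-diagonal entry, together with diagonal rescalings), and realize this factorization inside $D(t)$ as a sequence of handle-slide bifurcations placed at distinct times in the interior of $(i/N,(i+1)/N)$. Between handle-slides, the basis identifications are held fixed and the action values interpolate continuously from their values at $t=i/N$ to their values at $t=(i+1)/N$. When a birth/death time $t^* \in (i/N,(i+1)/N)$ is present, I would isolate it inside a small window $[t^*-\eta, t^*+\eta]$, apply the previous argument on the two flanking intervals (refining $N$ if necessary to produce the small-degree chain maps at this finer scale), and at $t^*$ invoke Lemma \ref{lem:bd} to conclude that the pair of generators $x,y$ involved in the $C(t)$-birth/death satisfies $\partial x = k y$ for some unit $k$. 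This is precisely the algebraic data of a simple birth/death bifurcation, which can be inserted into $D(t)$ at $t^*$.

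Concatenating across all subintervals yields the required PWC family $D(t)$ with $D(i/N)\cong C(i/N)$ as chain complexes at the rational sample times. The main obstacle is the explicit realization step in the no-birth/death case: factoring an upper-triangular matrix into elementary ones is routine linear algebra, but carrying out this factorization inside a PWC family requires ordering the elementary handle-slides so that each intermediate differential remains strictly filtration-decreasing and each intermediate action spectrum stays discrete, i.e., no two bifurcations collide in time or action level, in accord with the simple-bifurcation requirements of Section \ref{ssec:Barcodes}.
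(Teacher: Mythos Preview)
Your approach is correct and close in spirit to the paper's, but you make the construction more laborious than necessary. The paper's key simplification is this: on a subinterval $[i/N,(i+1)/N)$ containing no birth/death, it simply declares $D(t)$ to have the \emph{same differential} as $D(i/N)=C(i/N)$ throughout, letting only the action values of the compatible basis drift continuously toward their values at $t=(i+1)/N$. The limiting complex $\tilde D((i+1)/N)$ then differs from $D((i+1)/N)=C((i+1)/N)$ by the chain isomorphism $\phi_i$, which Lemma~\ref{lem:se} shows is of degree zero once the actions have already drifted by $\epsilon$. The paper does not factor this isomorphism into elementary handle-slides; if one insists on the literal ``at most one simple bifurcation per time'' clause of the PWC definition, such a factorization is routine and can be inserted in an arbitrarily short window near $(i+1)/N$.

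Your stated ``main obstacle''---ordering elementary handle-slides so that every intermediate differential remains strictly filtration-decreasing---therefore disappears in the paper's presentation: the differential is held fixed while actions move, and all the change-of-basis happens at once via a single filtered automorphism. For the birth/death case your argument and the paper's coincide, both invoking Lemma~\ref{lem:bd}. One small correction: your phrase ``refining $N$ if necessary'' near a birth/death is not quite right, since the hypothesis only furnishes $\phi_i,\psi_i$ at the fixed grid $\{i/N\}$; one should instead choose $N$ large enough from the outset so that each subinterval contains at most one exceptional moment.
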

\begin{proof}

  We start by prescribing
  $$D(i/N) \coloneqq C(i/N)$$
  for all $i =0,1\ldots,N.$
  
  First we consider the special case when $C(t)$ has no birth/deaths, and the action values of the basis elements are all distinct for all times. We then construct the PWC family as follows. The complexes $D(t)$ for $t \in [i/N,(i+1)/N)$ are constructed by setting $D(t) \coloneqq D(i/N)$ as a complex, and then simply letting the action values of a compatible basis vary accordingly with $t \in [i/N,(i+1)/N)$. (I.e.~the differential remains unchanged.) It is immediate that $D(t) \cong C(t)$ holds on the level of \emph{filtered vector spaces.}

  The family of complexes $D(t)$ for $t \in [i/N,(i+1)/N)$ extends by continuity to also $t=(i+1)/N$; denote the limit filtered complex by $\tilde D((i+1)/N)$.  What remains is to construct an isomorphism of filtered complexes $\tilde D((i+1)/N)\cong D((i+1)/N)$.

  The assumptions of Lemma \ref{lem:se} are satisfied for all maps $\phi_i,\psi_i$ whenever $N \gg 0$. In particular, all complexes $D(i/N)$ can be assumed to be $\delta$-gapped for some fixed $\delta>0$. Hence $\phi_i \colon D(i/N) \to D((i+1)/N)$ is a chain isomorphism of degree $\epsilon$. Since $\tilde D((i+1)/N)$ is canonically identified with $D(i/N)$ (only the action values of the compatible basis have changed slightly), it follows that the induced chain isomorphism $\phi_i \colon \tilde D((i+1)/N) \to D((i+1)/N)$ is of degree zero. This implies that we have a PWC family, as sought.

In the case when the family $C(t)$ has birth/deaths or compatible basis elements of the same action value, then we need to take care at those moments separately. This we do in the subsequent paragraph. After having constructed the PWC family in that a small neighborhood of these points in time, the family for the remaining times can be constructed as above. 

In the case when two action values for a compatible basis coincide at some $t=i/N$, then we can again use Lemma \ref{lem:se} as above to construct the family $D(t)$ for $t \in [i/N,(i+1)/N]$ and $[(i-1)/N,i/N]$. In the case when there is a birth/death at $t=i/N$, the same can be done by alluding to Lemma \ref{lem:bd}. Once having taking care of the construction of $D(t)$ for these times, we simply invoke the construction in the first simple case, i.e.~the case when action values are distinct, and when there are no birth/death moments.



 \end{proof}

\section{Proof of Proposition \ref{prop:Bifurcation}}
\label{sec:Geometry}

Consider in the symplectization $(\R_\tau \times Y, d(e^\tau \alpha)),$ the {\bf{Lagrangian trace}} of a Legendrian isotopy $\Lambda_t$ with $t$ ranging from $-$ to $+.$
This (exact embedded) Lagrangian concordance coincides with the cylinder $\R \times \Lambda_\pm$ for $\pm \tau \ge k$ for some $k \gg 0$, and after reparameterizing the Legendrian isotopy, the $\tau$-level set of the trace is close to $\{\tau\} \times \Lambda_\tau$
For example, see \cite[Theorem 1.2]{Chantraine}, the proof of \cite[Lemma A.1]{Ekholm}, the proof  of \cite[Lemma 6.1]{EkholmHondaKalman}, or \cite[Definition 2.10]{PanRutherford20} on constructing this trace.  We recall the version of the construction from \cite[Theorem 1.2]{Chantraine} by Chantraine in Appendix \ref{sec:trace}.

The {\bf length} $\delta \in \R_{\ge 0}$ of the cobordism, as defined by Sabloff--Traynor in \cite{SabloffTraynor15}, is the shortest $\delta$ such that $\{\tau \in [\tau_0,\tau_0+\delta]\} \subset \R \times Y$ contains the non-cylindrical portions of the cobordism and almost complex structure. Proposition \ref{prop:trace} in the Appendix shows how the length of the trace cobordism constructed by \cite{Chantraine} depends on the conformal factor of the contact isotopy; we also consider the length of the ``inverse cobordism''. The length is crucial for analyzing the filtered invariance result, since the chain maps produced by the trace cobordism have filtration properties that depend on this.

We denote by $x^t$ a continuous family of chords of $\Lambda_t.$ Recall that the family of Legendrians is assumed to be generic, which means that a chord $x^\pm$ of $\Lambda_\pm$ that does not undergo a death-type bifurcation in the family corresponds to a unique chord $x^\mp$ of $\Lambda_\mp$.

The Lagrangian together with an appropriately compatible almost complex structure $J$, induce a DGA-morphism $$\Phi: (\mathcal{A}_+, \partial_+) \rightarrow (\mathcal{A}_-, \partial_-)$$ between the DGAs $(\mathcal{A}_\pm, \partial_\pm)$ of $\Lambda_\pm$ \cite{Ekholm}. More precisely, we require as in \cite{Ekholm} that the almost complex structure is {\bf{adjusted}}, which means that it is compatible with the symplectic form and, {\bf{cylindrical}} outside of a compact subset; the latter means that the almost complex structure preserves the contact planes lifted to $\R_\tau \times Y,$ is invariant under the $\R_\tau$-translation, and sends $\partial_\tau$ to the lifted Reeb flow.
 
Suppose the Lagrangian trace of an isotopy $\Lambda_t$  induces the map $\Phi$ and the inverse trace induces the map $\Psi.$ 
Construct a generic 1-family of Lagrangians connecting the trivial cobordism $\Lambda_+\times \R$ with induced DGA map $\id: (\mathcal{A}_+, \partial_+) \rightarrow (\mathcal{A}_+, \partial_+),$ to the trace concatenated with its inverse.
Let $G$ count index $-1$ punctured pseudoholomorphic curves that occur at isolated moments in this family of cobordism, as defined in \cite{Ekholm}. Then 
\begin{eqnarray}
\label{eq:homotopy1}
\id & = & \Psi\Phi - (\partial_+ G - G \partial_+).
\end{eqnarray}
The below  result follows from the  filtration-preserving  properties of the involved DGA-morphisms and chain homotopy.



\begin{lem}
\label{lem:ActionBound}
\begin{enumerate}
\item
  Consider the Legendrian isotopy $\Lambda_t$ generated by a time-dependent contact Hamiltonian $H_t$. For any $\delta>0$ the Legendrian isotopies
  $$\{\Lambda_t\}_{t \in [i/N,(i+1)/N]} \quad \text{and} \quad \{\Lambda_{-t}\}_{t\in[-(i+1)/N,-i/N]}$$
  may all be assumed to have a trace cobordism of length less than $\delta$ whenever $N \gg 0$.  In addition, both concatenations of these two trace cobordisms may be assumed to be compactly supported Hamiltonian isotopic to the trivial cylinder through cobordisms of length at most $2\delta$.

\item Let $\Phi$ and $\Psi$ above be induced by Lagrangian cobordisms of length $\delta>0$, where the concatenation of the cobordisms are Hamiltonian isotopic to trivial cobordism through cobordisms of length at most $2\delta$. (E.g.~the trace cobordisms from Part (1).)  Then
  $$ \ell(F(x^+)) < \ell(x^+) e^{2\delta}$$ holds for $F \in \{\Psi, \Phi\}$,
  while any word $\mathbf{x}^+$ that consists of letters that satisfy $\ell(x^+) \le a$ has the property that $G(\mathbf{x}^+)$ consists of words of letters that all satisfy $\ell(x^-) \le e^{2\delta} a$. (Notation as in Equation (\ref{eq:homotopy1}).) 

%
\end{enumerate}
\end{lem}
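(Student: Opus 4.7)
The plan is to derive Part (1) from the explicit geometric properties of Chantraine's Lagrangian trace construction as recalled in Proposition \ref{prop:trace} of the Appendix, and to deduce Part (2) from a standard Stokes' theorem and positivity-of-area argument applied to the pseudoholomorphic curves defining $\Phi$, $\Psi$, and $G$.

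For Part (1), the key input is the estimate provided by Proposition \ref{prop:trace} relating the length of the trace cobordism to the time-interval length together with the conformal factor associated with the contact Hamiltonian $H_t$ generating $\Lambda_t$. Restricting the isotopy to a subinterval $[i/N,(i+1)/N]$, the time-length is $1/N$ and the conformal factor of the corresponding contactomorphism is $1+O(1/N)$, yielding a trace cobordism of length $O(1/N)$, which falls below $\delta$ once $N\gg 0$. The same estimate applies to the time-reversed isotopy $\Lambda_{-t}$. For the concatenation, observe that the composed contact isotopy is the identity, so by a standard Moser-type argument the concatenated cobordism is compactly supported Hamiltonian isotopic to the trivial cylinder; this isotopy can be performed through cobordisms whose length is controlled by the sum of the lengths of the two individual traces, giving the bound $\le 2\delta$.

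For Part (2), I would apply Stokes' theorem and the positivity of the symplectic form $d(e^\tau\alpha)$ to a pseudoholomorphic disk $u$ contributing to $\Phi(x^+)$ or $\Psi(x^+)$. Such a disk has one positive puncture asymptotic to $x^+$ at the top cylindrical end and finitely many negative punctures asymptotic to chords $y^-_j$ of $\Lambda_-$ at the bottom, with the non-cylindrical portion of the cobordism (and of the almost complex structure) confined to $\tau\in[\tau_0,\tau_0+\delta]$. Using the exactness of $L$ together with the fact that a primitive of $e^\tau\alpha|_L$ is constant on each component of the cylindrical ends (since $\alpha|_{\Lambda_\pm}\equiv 0$ by the Legendrian condition), the standard computation of the symplectic area of a punctured holomorphic disk translates positivity of area into an action estimate of the form
\[
\sum_j \ell(y^-_j)\;\le\; e^{2\delta}\,\ell(x^+).
\]
Via the multiplicativity of the DGA morphism on words, this applies letter by letter, yielding the first claim. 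The bound for $G$ follows analogously: disks in the parameterized moduli space associated with the $1$-parameter family of cobordisms from Part (1) are each confined to cobordisms of length at most $2\delta$, so the same Stokes' argument bounds each letter appearing in $G(\mathbf{x}^+)$ by $e^{2\delta}\,a$.

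The main obstacle is the bookkeeping in Part (1): translating the time-rescaling argument into a genuine quantitative bound on the length of the trace cobordism requires unpacking Chantraine's Moser-type construction, and building a length-controlled Hamiltonian isotopy between the concatenation and the trivial cylinder requires a parameterized version of the same estimate. Once Proposition \ref{prop:trace} is carefully applied, the area estimates of Part (2) are the standard ones in Legendrian contact homology and both claims follow directly.
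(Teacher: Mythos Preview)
Your proposal is essentially correct and follows the same route as the paper. For Part~(1), both you and the paper reduce directly to Proposition~\ref{prop:trace}; note that the concatenation bound and the Hamiltonian isotopy to the trivial cylinder are already packaged in Proposition~\ref{prop:trace}(3), so no separate Moser-type argument is needed. For Part~(2) on $\Phi$ and $\Psi$, the paper cites the explicit Hofer-energy bound from \cite[Lemma~3.4 and Proposition~3.5(9)]{Dimitroglou:Cthulhu}, which is exactly your Stokes/positivity inequality $\ell(F(x^+))e^{-\delta}\le \ell(x^+)e^{\delta}$.

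The one place where your sketch is too glib is the treatment of $G$. The chain homotopy $G$ is \emph{not} simply a count of index $-1$ disks in a one-parameter family of cobordisms; in the SFT setting with multiple negative punctures it is built from pseudoholomorphic \emph{buildings} as in \cite[Appendix~B]{Ekholm}, and this structure is what makes the per-letter bound work. Concretely, each term of $G(x_1^+\cdots x_k^+)$ corresponds to a disconnected building whose components each have a single positive puncture at one of the $x_i^+$; one level of each component lives on a cobordism in the interpolating family (length $\le 2\delta$), while the remaining levels consist of index-$1$ disks on the cylindrical ends (i.e.\ pieces of the differential, which are strictly filtration-decreasing). The bound $\ell(x^-)\le e^{2\delta}a$ on each output letter then follows because that letter sits at a negative puncture of some component whose unique positive input has length $\le a$, and only the single cobordism level contributes the $e^{2\delta}$ stretch. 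Your Stokes argument is the right engine, but without the building decomposition one only gets a bound on the total action of the output word, not on its individual letters; the paper makes this point explicitly.
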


\begin{proof}
 For item (1), we start by fixing a global contact isotopy that generates the Legendrian isotopy $\Lambda_t$. For $N \gg 0$, we may assume by continuity that the restriction of the contact isotopy that generates $\{\Lambda_t\}_{t\in [i/N,(i+1)/N]}$ has a conformal factor that is bounded by $\delta>0$. The construction of the trace cobordisms with the sought properties can now be deduced from Proposition \ref{prop:trace}.

  

For item (2): The statement is clear for any map $F$ that is defined by a count of finite energy pseudoholomorphic disks in $\R_\tau \times Y$ with boundary on a Lagrangian cobordism of length at most $2\delta$ and a single positive puncture, when the almost complex structure is adjusted and, moreover, cylindrical where the cobordism is cylindrical (e.g.~outside of $[-\delta,\delta]\times Y$). Namely, \cite[Lemma 3.4 and Proposition 3.5 (9)]{Dimitroglou:Cthulhu} explicitly bound the Hofer energy of such curves used to define $F(x^+)$ from below by 0 and from above by the quantity $\ell(x^+) e^{\delta} -\ell(F(x^+))e^{-\delta}.$ To match with their convention, we ignore their distinction of pure and mixed chords, and we center the concordance around $\tau_0 = 0$ (from our definition of length above).

Since the chain maps $F=\Phi,\Psi$ are defined by pseudoholomorphic disks of the type mentioned above, the statement now directly follows in these cases.

The chain homotopy $G$ has a more complicated construction, which was carried out in \cite[Appendix B]{Ekholm}. Each term in $G(x^+_1\cdot\ldots\cdot x^+_k)$ corresponds to a count of disconnected pseudoholomorphic \emph{buildings} (see \cite{BEHWZ}), where each component of the building has the topological type of a broken disk with a single positive puncture at $x^+_i$ for $i=1,\ldots,k$. In addition, each component satisfies the following:
\begin{itemize}
\item There is a single level consisting of a number of pseudoholomorphic disks of index $-1$ and $0$ inside $\R \times Y$, each with boundary on one of the Lagrangian cobordisms in the family that interpolates between the concatination and the trivial cylinder (these are all of length at most $2\delta$), and which all have a single positive puncture. As in the first case, we can again assume that the almost complex structure is cylindrical in the subset where the family of cobordisms are cylindrical (e.g.~outside of $[-\delta,\delta]\times Y$).
\item All other levels consist of punctured disks of index $1$ and trivial strips of index 0 inside $\R \times Y$, with boundary on either $\R \times \Lambda_{i/N}$ or $\R \times \Lambda_{(i+1)/N}$ and a single positive puncture, which are pseudoholomorphic for a cylindrical almost complex structure.
\end{itemize}
If one consider these terms as a composition of operations, the fact that the disks of index 1 in the second bullet point define the differential (which is strictly filtration-decreasing), the statement finally follows by an energy estimate similarly to the first case.

%


\end{proof}

%
%
%
%

The remainder of the proof of Proposition \ref{prop:Bifurcation} is similar to the proof of Proposition \ref{prop:PWC}; roughly, we decompose the isotopy into small steps that then are shown to induce homotopy equivalences of small degree.  Lemma \ref{lem:ActionBound}.1 implies that, for a sufficiently fine decomposition of $[-,+]$, each map $\Phi_{[-_i,+_i]}$ has an arbitrarily small cobordism length. To prove Proposition \ref{prop:Bifurcation}, we thus restrict to a single interval of a sufficiently fine generic decomposition (this single small interval we continue to label $[-,+]$) and show that it is  a finite composition of (de)stabilizations and tame automorphisms.

Note that the Reeb chord lengths vary continuously with the parameter $t$. For a very fine decomposition we may thus assume that these lengths are almost constant in the interval $[-,+]$. Together with the Proposition's hypothesis of genericity, and since $\delta>0$ can be assumed to be arbitrarily small, we get three cases listed below. For all cases, we assume that no chord has action less than $e^{2\delta}.$ Recall that we only consider the Chekanov--Eliashberg algebra $\mathcal{A}^l$ generated by chords with actions less than $l.$ Below we thus disregard all chords of action greater than $l'$ for some suitable action level $l' \gg l.$  Moreover, after further shrinking $\delta>0$, we can assume that no Reeb chords on the family $\Lambda_t$ of Legendrians has length contained in the interval $[e^{-\delta}l',e^{\delta}l']$.

\begin{enumerate}
\item[Case (1):] There are no births/deaths in $[-,+]$. Further, any $x^+$ satisfies
  $$  \ell(x^\mp) \in  [e^{-\delta}\ell(x^\pm), e^{\delta}\ell(x^\pm)],$$
  while any $y^+$ different from $x^+$ satisfies
  $$[e^{-\delta}\ell(y^-),e^{\delta}\ell(y^-)] \cap [e^{-\delta}\ell(x^+),e^{\delta}\ell(x^+)]=\emptyset.$$
  (In particular, any two Reeb chords have distinct lengths.)
\item[Case (2):]  The chords whose lengths are contained in $[e^{-2\delta}\ell_0,e^{2\delta}\ell_0]$ are precisely two, and undergo a birth/death at $0 \in [-,+]$; i.e.~there are precisely two chords $x^+,y^+$ of lengths
  $$\ell(x^t),\ell(y^t) \in [e^{-\delta}\ell_0,e^{\delta}\ell_0]$$
for $t > 0$ (resp. $t <0$) while there are no such chords for $t<0$ (resp. $t >0$). Furthermore, $\ell(x^+) > \ell(y^+)$ (resp. $\ell(x^-) > \ell(y^-)$). The chords $z^t$ of length
  $$\ell(z^t) \notin [e^{-2\delta}\ell_0,e^{2\delta}\ell_0]$$
  satisfy  the assumptions of Case (1).

\item[Case (3):] There are no births/deaths in $[-,+]$.  The chords whose lengths are contained in $[e^{-2\delta}\ell_0,e^{2\delta}\ell_0]$ are precisely two, which moreover have lengths contained inside $[e^{-\delta}\ell_0,e^{\delta}\ell_0]$, satisfy $\pm\ell(x^\pm) > \pm\ell(y^\pm)$ while $\ell(x^0) =\ell(y^0).$ The chords $z^t$ of length
  $$\ell(z^t) \notin [e^{-2\delta}\ell_0,e^{2\delta}\ell_0]$$
  satisfy  the assumptions of Case (1).

\end{enumerate}

%

\smallskip

In the case when there are no births/deaths, the invariance under DG-homotopy together with Lemma \ref{lem:ActionBound}.2 now implies that
\begin{eqnarray}
  \label{eq:homotopy}
  x^+ & = & (\Psi\Phi - (\partial_+ G - G \partial_+))x^+ = \\
\notag &=&  k_\Psi k_\Phi x^+ + \sum_j  v^+_j - (\partial_+ G - G \partial_+)x^+ 
=  k_\Psi k_\Phi x^+ + \sum_j  v^+_j + \sum_k  u^+_k
\end{eqnarray}
is satisfied where $k_\Phi \in \kk$ (resp. $k_\Psi \in \kk$) are the coefficients $\langle \Phi(x^+),x^-\rangle$ and $\langle \Psi(x^-),x^+\rangle$, and $v^+_j, u^+_k$ are monomials of chords of $\Lambda_+$ that satisfy
$$e^{2\delta} \ell(x^+) \ge \ell(v^+_j), \ell(u^+_k).$$
%

\smallskip

\noindent
Case(1):

 Looking at the last two terms in equation (\ref{eq:homotopy}), Lemma \ref{lem:ActionBound}.2 implies two things.
First, if $v_j^+ \in (\kk\setminus\{0\}) x^+$ then $v_j^+$ is in the image of  $\Psi$ of an element of action at most $e^{\delta}\ell(x^+),$ which by definition is not contained in $\kk x^-$.
Second, if $u_k^+ \in (\kk\setminus\{0\}) x^+$, then either $x^+$ appears as a term in the differential of a word $G(x^+)$ whose all letters all are of action at most $e^{2\delta}\ell(x^+)$, or as a term in $G(w^+)$ for a word $w^+$ of action $\ell(w^+) < \ell(x^+)$. Moreover, in the latter case, all letters in $G(w^+)$, and thus in particular $x^+$ itself, have action bounded by $e^{2\delta}\ell(w^+)$.
The hypotheses in Case (1) imply that $\ell(v^+_j), \ell(u^+_k) <  \ell(x^+).$ Thus $k_\Phi= k_\Psi^{-1}$ in (\ref{eq:homotopy}) is a unit. 

\smallskip

\noindent
Cases (2) and (3):

 Case (1) handles all chords without other chords of approximately the same action. So we have reduced to studying the maps at $x$ and $y$ which have approximately the same action.
Consider the $2 \times 2$ matrix 
$ [F]:=  
\begin{pmatrix} F_{xx}  & F_{xy} \\ F_{yx} & F_{yy} \end{pmatrix}$
in the $x,y$ basis, for the map $F \in \{\Phi, \Psi, G, \partial_+, \id\}.$
The bound from below of $\ell(z)$ implies there is no additional (non-linear) term involving $x$ or $y$ in either $F(x)$ or $F(y).$ 
Since $\ell(x^+) > \ell(y^+),$
$[ \partial_+  ]  =\begin{pmatrix} 0  & 0 \\  (\partial_+)_{yx} & 0 \end{pmatrix}$ for some $(\partial_+)_{yx} \in \kk.$
 We also consider the corresponding $2 \times 2$ matrix version of (\ref{eq:homotopy}).

\smallskip

\noindent
Case (2):

Since $(x^-,y^-)$ do not exist, $$
[\Psi  ] 
=\begin{pmatrix} 0  & 0 \\ 0 & 0 \end{pmatrix} = 
[\Phi  ] .$$
So (\ref{eq:homotopy}) implies $1 = \id_{xx} =  G_{xy}(\partial_+)_{yx}  .$
In particular, after the tame automorphism of scaling $y$ by $\left((\partial_+)_{yx}\right)^{-1} =  G_{xy} \in \kk,$
$\partial_+(x^+) = y^+ + \sum_i w_i$ with $\ell(w_i) < \ell(y^+).$
The result now follows from Chekanov's algebraic treatment of birth-deaths 
\cite[Sections 8.4-8.5]{Ch}.

\smallskip

\noindent
Case (3):

Assume at  $t=0$ that $J$ is generic so that the DGA differential $\partial_0$ is well defined.
Let $\Phi^0: (\mathcal{A}, \partial_+) \rightarrow  (\mathcal{A}, \partial_0)$ and
$\Psi^0: (\mathcal{A}, \partial_0) \rightarrow  (\mathcal{A}, \partial_+)$ be the DGA morphisms
induced by the trace and its inverse over the subinterval $[0,+] \subset [-,+].$
Let $G^0$ be the homotopy relating $\Psi^0 \Phi^0$ and $\id_0.$
As above, define the $2 \times 2$ matrix 
$\begin{pmatrix} F_{xx}  & F_{xy} \\ F_{yx} & F_{yy} \end{pmatrix}$
in the $x^0,y^0$ basis, for the map $F \in \{\Phi^0, \Psi^0, G^0, \partial_0, \id_0\}.$
Stokes Theorem implies that $[ \partial_0 ] =0$ as a $2 \times 2$ matrix.
From the $2 \times 2$ matrix equations
$$
[ \id_0   ] 
= 
[ G^0 \partial_0  ] 
+ 
[ \partial_0 G^0 ] 
+ 
[ \Psi^0 \Phi^0 ]  = [\Psi^0 \Phi^0  ] ,
\quad 
[ \partial_+ \Phi^0 ]  = 
[ \Phi^0 \partial_0 ]  = 0,
\quad
[ \Psi^0 \partial_+ ]  = 
[ \partial_0 \Psi^0 ]  = 0,$$
we get 
$$
1 = (\id_0)_{xx} = \Psi^0_{xx} \Phi^0_{xx} + \Psi^0_{xy} \Phi^0_{yx},
\quad
(\partial_+)_{yx} \Phi^0_{xx}=0,
\quad
\Psi^0_{xy} (\partial_+)_{yx}=0,
$$
which imply $(\partial_+)_{yx}=0.$ 

Since $[ \partial_+ ]  =0$ as a $2\times 2$ matrix, (\ref{eq:homotopy}) implies $[\Psi  ] , [\Phi  ]  \in 
{{GL(2,\Z).}}$
\cite[Corollary 2.6 ]{Conrad} proves that 
{{$SL(2,\Z),$}} 
which is an index 2 subgroup of 
{{$GL(2,\Z),$}}
 is generated by the two tame automorphisms
$$
\begin{pmatrix} 1  & 1 \\ 0 & 1 \end{pmatrix}, 
\quad
\begin{pmatrix} 1 & 0 \\ 1 & 1 \end{pmatrix}.
$$
But we also allow the map
$$\begin{pmatrix} -1  & 0 \\ 0 & 1 \end{pmatrix} \in 
{{GL(2,\Z) \setminus SL(2,\Z).}}
$$
Thus $\Psi, \Phi$ are compositions of our allowable tame automorphisms.

\section{A Rabinowitz-Floer theory for Legendrians}
\label{sec:Rabinowitz}

Rabinowitz-Floer homology in the case of a contact type hypersurface was originally defined by Cieliebak, Frauenfelder and Oancea \cite{CieliebakFrauenfelderOancea}. We present a version of the theory in the relative case, RFH, which previously has been considered in the Hamiltonian setting by Merry \cite{Merry} and Cieliebak and Oancea \cite{CieliebakOancea}, and in the SFT setting by Legout \cite{Legout}. Our construction of the complex is the direct generalization of the construction from \cite{Legout} to the case of an arbitrary contact manifold, while allowing augmentations that are only defined under some action level.

In Section \ref{sec:ModuliSpace}, we compactify the moduli spaces used in Sections \ref{sec:Banana} and \ref{sec:BananaInvariance}.
In Section \ref{sec:Banana}, we introduce a Rabinowitz Floer complex (RFC) as a mapping cone complex generated by Reeb chords. 
In Section \ref{sec:BananaInvariance}, we prove the invariance of this mapping cone complex. Compared to the invariance result from \cite{Legout}, we here need to take extra care for controlling the filtration-preserving properties, in order to establish invariance by a PWC family of complexes.

\subsection{Compactification of certain moduli spaces}
\label{sec:ModuliSpace}

Let $\Lambda_0^+, \Lambda_1^+ \subset Y$ be two Legendrians isotopic to $\Lambda_0^-,\Lambda_1^-.$  Assume $\bar{\Lambda}^\pm := \Lambda_0^\pm \cup \Lambda_1^\pm$ is embedded.
Let $ - \le t \le +$ parameterize this isotopy. (We use $\pm$ instead $\pm 1$ to avoid notational overuse of ``1.")
Further, let $L_0, L_1 \subset ( \R_\tau \times Y, d(e^\tau \alpha))$ be the exact Lagrangian concordance arising from the trace of the isotopy, with $L_i \cap  \{\tau\} \times Y = {\Lambda}_i^\pm$ for $\pm \tau \gg 1.$
Assume that $L := L_0 \cup L_1$ has at most one transverse double point $q.$

 There exists primitives of $e^\tau\alpha|_{TL_i}$ by the exactness. Since this primitive is necessarily locally constant wherever $L_i$ is cylindrical, we can fix a unique primitive that vanishes on the negative ends of $L_i$. After a small perturbation of $L_i$, we may assume that there is a non-zero difference of primitives at the unique intersection point $\{q\}=L_0 \cap L_1$.

We  consider several types of asymptotic behaviors for our holomorphic disks.
\begin{itemize}
\item {\bf{Mixed $\alpha^\pm$-chords.}}
Such a chord starts on $\Lambda_0^\pm$ (resp. $\Lambda_1^\pm$) and ends on $\Lambda_1^\pm$ (resp. $\Lambda_0^\pm$). 
\item {\bf{Pure $\alpha^\pm$-chords.}}
Such a chord both starts and ends on $\Lambda_0^\pm$ (resp. $\Lambda_1^\pm$). 
\item {\bf{Lagrangian intersection point $q.$}}
\end{itemize} 
Let $\Gamma$ be a non-empty cyclically ordered set of the above, each endowed with a sign.
Repetition is allowed.
%
Let $\mbox{Bd}\in \{L, \bar{\Lambda}^\pm\}.$
Let $\mathcal{M}^{d}(\Gamma; \mbox{Bd})$
denote the moduli space of $J$-holomorphic disks $u:D \rightarrow \R \times Y,$ with  boundary  marked points, satisfying the following conditions.
\begin{itemize}
\item
The boundary of the disk maps to $L$ if $\mbox{Bd}= L$ and to $\R \times \bar{\Lambda}^\pm $ if $\mbox{Bd}= \bar{\Lambda}^\pm.$
\item
The (formal) dimension of the component is $d.$
\item 
Each marked point maps to an element of $\Gamma.$ The cyclic ordering of marked points induced by the boundary orientation matches the cyclic ordering of the chords/double points in $\Gamma.$
\item
If a marked point maps to the double point (i.e. when $\mbox{Bd}= L$), then the puncture is {\bf positive} (resp. {\bf negative}) if the primitive of $e^\tau\alpha|_{L}$ evaluated along the boundary of the disk makes a jump to a lower (resp. higher) value at the puncture, while traversing the boundary in the positive direction.

If a marked point maps to a chord, 
the endowed sign $\pm$ indicates that it is an asymptotic limit at the $\pm \infty$ end of the symplectization boundary. 
\end{itemize}
\begin{rem}
  Consider the Legendrian lift of $L_0 \cup L_1$ to the contactization $(\R_\tau \times Y \times \R_Z,dZ+e^\tau\alpha)$ that is uniquely determined by the requirement that its $Z$-coordinate vanishes at $\tau=-\infty$. The sign of the puncture at a double point has the following direct reformulation in terms of the Reeb chord on this Legendrian. A disk has a positive (resp. negative) puncture at $q$ if and only if the value of the $Z$-coordinate along the boundary of the disk, as specified by the Legendrian lift, jumps to a higher (resp. lower) value at the puncture when traversing the boundary according to the orientation. This will be important later, when we describe the cobordism by the front projection of its Legendrian lift.
  \end{rem}

Note that $\mathcal{M}^{d}(\Gamma; \bar{\Lambda}^\pm)$ has an $\R$-translation in the range.
We denote the quotient of this translation by $\widehat{\mathcal{M}}^{d-1}(\Gamma; \bar{\Lambda}^\pm) = \mathcal{M}^{d}(\Gamma; \bar{\Lambda}^\pm)/\R.$
We next list different types of boundary conditions for the moduli spaces.
In all cases below, $\Gamma$ may have some additional negative pure chords.
\begin{enumerate}
\item[($1_\pm$):]  $\mbox{Bd} =  \bar{\Lambda}^\pm;$ $\Gamma$ contains two positive mixed chords.
\item[($2_\pm$):] $\mbox{Bd} =  \bar{\Lambda}^\pm;$ $\Gamma$ contains one positive and one negative mixed chord.
\item[($3_\pm$):] $\mbox{Bd} =  \bar{\Lambda}^\pm;$ $\Gamma$  contains one positive pure chord  and no mixed chords.
\item[(4):]   $\mbox{Bd} =L;$ $\Gamma$ contains two mixed chords (both positive).
\item[(5):]   $\mbox{Bd} =L;$ $\Gamma$ contains two mixed chords (one positive, one negative).
\item[(6):]   $\mbox{Bd} =L;$ $\Gamma$ contains one mixed chord (positive) and $q$ (positive or negative).
\item[(7):]   $\mbox{Bd} =L;$ $\Gamma$ contains one mixed chord (negative) and $q$ (positive or negative).
\end{enumerate}
Note that the sign of the puncture at $q$ in cases (6) and (7) can be recovered by the following data: the component of the starting point (or endpoint) of the mixed Reeb chord asymptotic of the disk, together with the two action values of the potentials at $q$ for each of the two sheets of $L$.

We now describe the boundary $\partial$  in the sense of the SFT-compactification \cite{BEHWZ}, also called Gromov--Hofer compactification,  of certain 1-dimensional moduli spaces, modding out by the $\R$-translation when one can. 
We illustrate the notation with some examples. The broken curve ``$(2_+ \times 1_+)$" has boundaries in two copies of $(\R \times Y, \R \times \Lambda^+ ).$
In the lower copy there is a curve of index 1 (rigid after $\R$-translation) of type ($1_+$).
In the upper copy there is one curve of index 1 (rigid after $\R$-translation) of type ($2_+$)  and 
one ``trivial strip" of index 0 (which is a curve of the form $(\R \times \mbox{chord}) $). 
(We omit listing the trivial strips.)
The broken curve ($6 \times 6$) has two index 0 curves of type ($6$) in the same copy of $(\R \times Y, L),$ one with a positive puncture at $q$ and the other with a negative puncture at $q.$

Figures \ref{fig:breaking1}, \ref{fig:breaking2}, \ref{fig:breaking3}, \ref{fig:breaking4}, \ref{fig:breaking5}, \ref{fig:breaking6}, depict the broken configurations corresponding to the boundary strata of the moduli spaces of the corresponding type. Note that, in these figures, any trivial strip inside a cylindrical level has been omitted. For every broken configuration in which there is a non-empty level with boundary on $\overline{\Lambda}^+$, as well as a middle level with boundary on $L$, there might be non-cylindrical components in the middle level with only pure punctures. Such components are not exhibited in the aforementioned figures; see Figure \ref{fig:breaking5.2} for an example where levels of this type arise in the boundary of the moduli space of type ($5$).

The figures depict the most general case that we will need, i.e.~the special case when $L_0=\R \times \Lambda$ is a trivial cylinder, which in particular means that $\Lambda_0^\pm=\Lambda$, while $L_1$ is a Lagrangian cylinder from $\Lambda_1^-$ to $\Lambda_1^+$.

\begin{figure}[htp]
\vspace{5mm}
\centering
\labellist
\pinlabel $\color{blue}\Lambda^\pm_1$ at -10 61
\pinlabel $\color{blue}\Lambda^\pm_1$ at 186 110
\pinlabel $\Lambda$ at 31 61
\pinlabel $\Lambda$ at 230 110
\pinlabel $\color{blue}\Lambda^\pm_1$ at 111 36
\pinlabel $\Lambda$ at 144 36
\pinlabel $\color{blue}\Lambda^\pm_1$ at 102 110
\pinlabel $\Lambda$ at 144 110
\pinlabel $\color{blue}\Lambda^\pm_1$ at 283 110
\pinlabel $\Lambda$ at 335 110
\pinlabel $\color{blue}\Lambda^\pm_1$ at 369 110
\pinlabel $\color{blue}\Lambda^\pm_1$ at 212 36
\pinlabel $\Lambda$ at 311 36
\pinlabel $\color{blue}\Lambda^\pm_1$ at 271 36
\endlabellist
\includegraphics[scale=1]{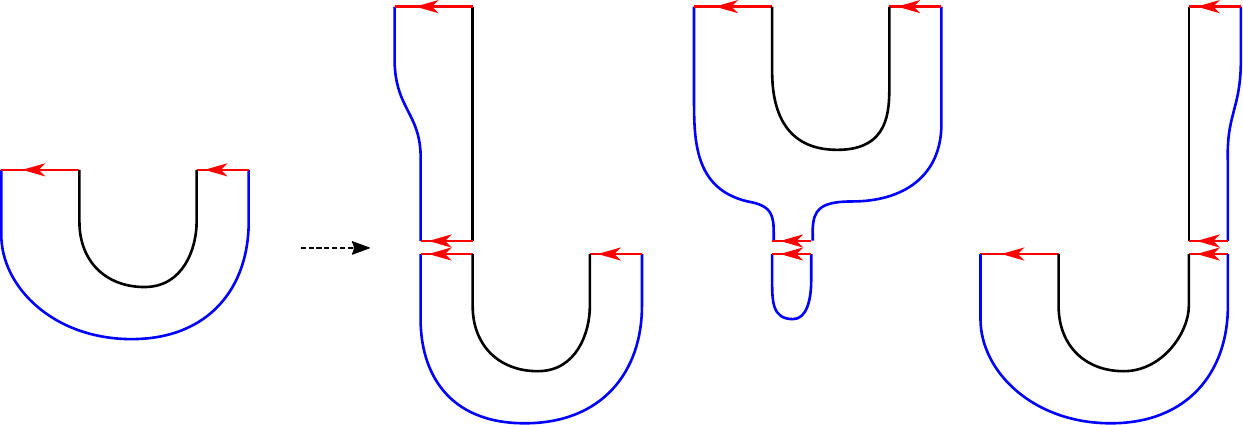}
\caption{Boundary points of the compactification of the moduli space $(1_\pm)$ of punctured disks with boundary on the cylinders over $\Lambda \cup \Lambda^\pm_1$ with two positive mixed Reeb chord asymptotics. Note that we have omitted any trivial strip inside the symplectization level from the figure. The broken configurations on the left and right belong to $(2_\pm \times 1_\pm)$ while the middle configuration belongs to $(1_\pm \times 3_\pm)$.}
\label{fig:breaking1}
\vspace{3mm}
\end{figure}

\begin{figure}[htp]
\vspace{5mm}
\centering
\labellist
\pinlabel $\color{blue}\Lambda^\pm_1$ at -10 90
\pinlabel $\Lambda$ at 31 90
\pinlabel $\Lambda$ at 103 21
\pinlabel $\color{blue}\Lambda^\pm_1$ at 70 21
\pinlabel $\Lambda$ at 103 126
\pinlabel $\color{blue}\Lambda^\pm_1$ at 61 126
\pinlabel $\color{blue}\Lambda^\pm_1$ at 131 126
\pinlabel $\Lambda$ at 174 126
\endlabellist
\includegraphics[scale=1]{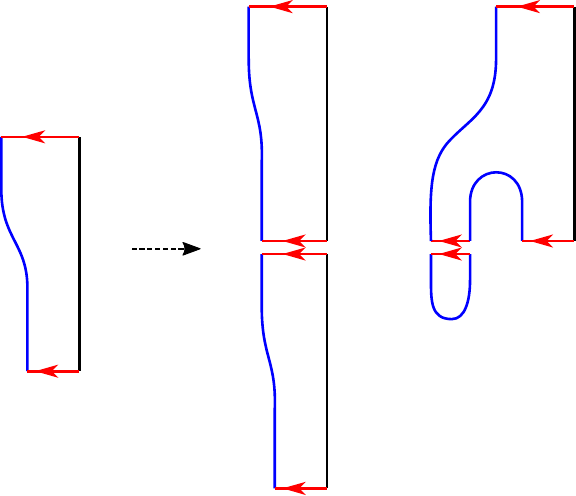}
\caption{Boundary points of the compactification of the moduli space $(2_\pm)$ of punctured disks with boundary on the cylinders over $\Lambda \cup \Lambda^\pm_1$. Note that we have omitted any trivial strip inside the symplectization level from the figure. The broken configuration on the left belongs to $(2_\pm \times 2_\pm)$ while the one one the right belongs to $(2_\pm \times 3_\pm)$. There are similar breakings for the disks whose mixed Reeb chords start on $\Lambda^\pm_1$ and end on $\Lambda$.}
\label{fig:breaking2}
\vspace{3mm}
\end{figure}

\begin{figure}[htp]
\vspace{5mm}
\centering
\labellist
\pinlabel $\Lambda$ at -8 90
\pinlabel $\Lambda$ at 38 91
\pinlabel $\Lambda$ at 27 57
\endlabellist
\includegraphics[scale=1]{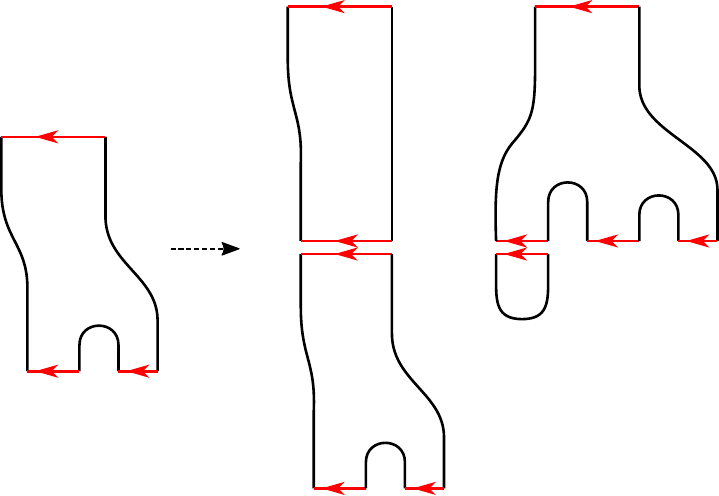}
\caption{Boundary points of the compactification of the moduli space $(3_\pm)$ of punctured disks with all boundary components on either the cylinder over $\Lambda$, $\Lambda_1^-$, or $\Lambda_1^+$. In other words, these are the curves that are used in the definition of the Chekanov--Eliashberg algebra of either Legendrian. All broken curves belong to $(3_\pm \times 3_\pm)$. Note that we have omitted any trivial strip inside the symplectization level from the figure.}
\label{fig:breaking3}
\vspace{3mm}
\end{figure}

\begin{figure}[htp]
\vspace{5mm}
\centering
\labellist
\pinlabel $\color{blue}L_1$ at -7 315
\pinlabel $\R \times \Lambda$ at 40 315
\pinlabel $\color{blue}L_1$ at 0 178
\pinlabel $\color{blue}\Lambda^+_1$ at -10 244
\pinlabel $\color{blue}\Lambda^+_1$ at 175 244
\pinlabel $\Lambda$ at 214 244
\pinlabel $\Lambda$ at 30 244
\pinlabel $\Lambda$ at 143 244
\pinlabel $\R \times \Lambda$ at 40 178
\pinlabel $\R \times \Lambda$ at 132 178
\pinlabel $\color{blue}L_1$ at 170 178
\pinlabel $\color{blue}\Lambda^-_1$ at 87 39
\pinlabel $\Lambda$ at 121 39
\pinlabel $\color{blue}L_1$ at 82 108
\pinlabel $\color{blue}L_1$ at -8 108
\pinlabel $\R\times \Lambda$ at 40 108
\pinlabel $\R \times \Lambda$ at 132 108
\pinlabel $\R \times \Lambda$ at 224 108
\pinlabel $\color{blue}L_1$ at 176 108
\pinlabel $\color{blue}\Lambda^-_1$ at 197 39
\pinlabel $\scriptstyle{\pm}$ at 32 78
\pinlabel $\scriptstyle{\mp}$ at 52 81
\endlabellist
\includegraphics[scale=1]{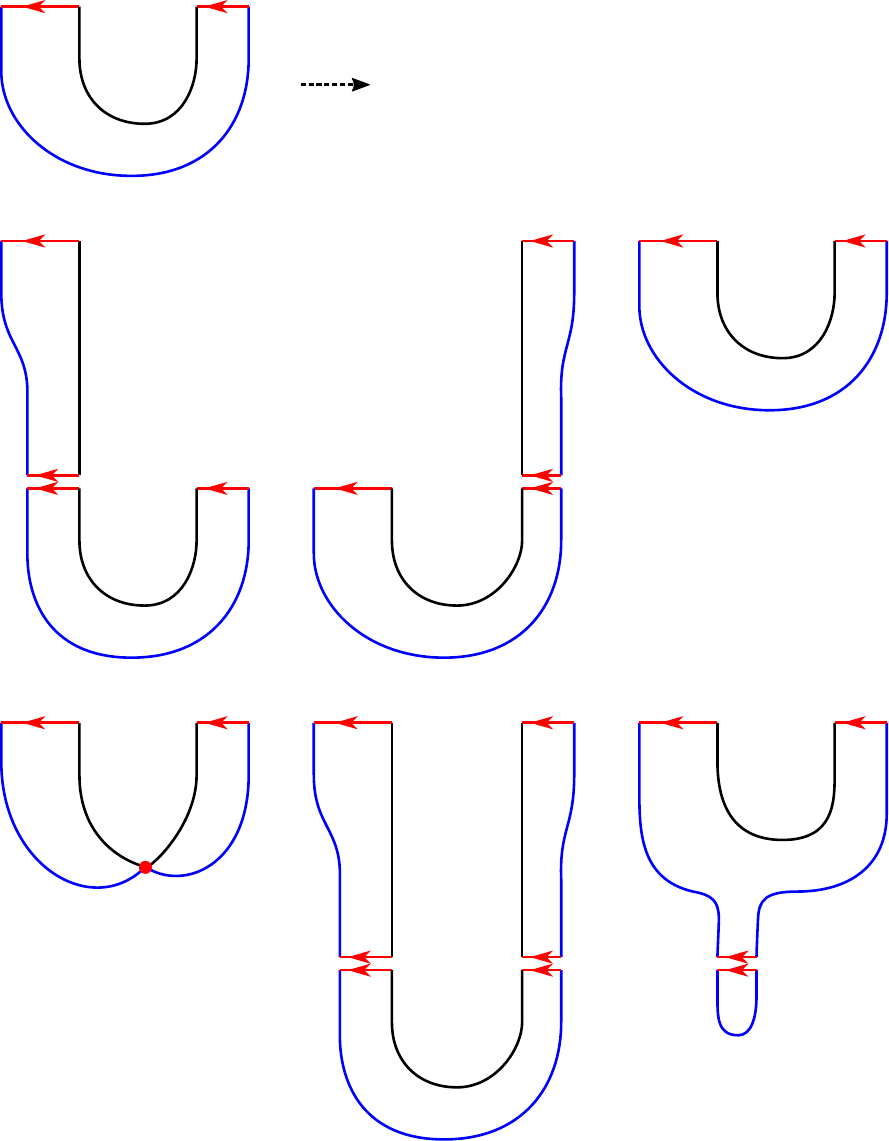}
\caption{Boundary points of the compactification of the moduli space $(4)$ of punctured disks with boundary on $L = L_0 \cup L_1$ with two positive mixed Reeb chords. Note that we have omitted any trivial strip inside the symplectization level from the figure. The broken configurations shown are as follows: The top row the left and middle configurations are in $(2_+ \times 4)$, while the one on the right is in $(1_+)$. On the bottom row from left to right, the configurations shown are in $(6\times 6)$, $(5 \times 5 \times 1_-)$, and $(4 \times 3_-)$.}
\label{fig:breaking4}
\vspace{3mm}
\end{figure}

\begin{figure}[htp]
\vspace{5mm}
\centering
\labellist
\pinlabel $\color{blue}L_1$ at -8 125
\pinlabel $\color{blue}L_1$ at 124 125
\pinlabel $\color{blue}L_1$ at 180 125
\pinlabel $\R \times \Lambda$ at 41 125
\pinlabel $\color{blue}L_1$ at 68 125
\pinlabel $\R \times \Lambda$ at 111 83
\pinlabel $\color{blue}\Lambda^+_1$ at 63 199
\pinlabel $\Lambda$ at 101 199
\pinlabel $\R \times \Lambda$ at 228 125
\pinlabel $\Lambda$ at 162 51
\pinlabel $\color{blue}\Lambda^-_1$ at 125 51
\pinlabel $\R \times\Lambda$ at 172 83
\pinlabel $\R \times \Lambda$ at 307 125
\pinlabel $\color{blue}L_1$ at 258 125
\pinlabel $\color{blue}\Lambda^-_1$ at 240 51
\pinlabel $\scriptstyle{\pm}$ at 202 115
\pinlabel $\scriptstyle{\mp}$ at 204 98
\endlabellist
\includegraphics[scale=1]{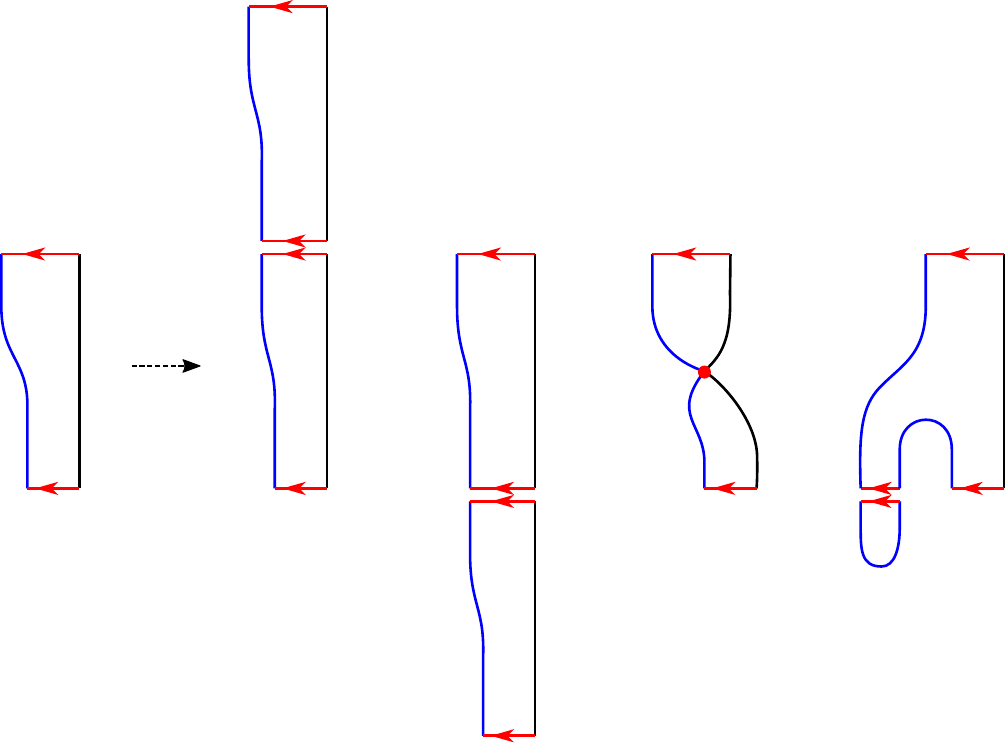}
\caption{Boundary points of the compactification of the moduli space $(5)$ of punctured disks with boundary on $L$ with one positive and one negative mixed Reeb chord puncture. Note that we have omitted any trivial strip inside the symplectization level from the figure. The broken configurations shown are in $(5 \times 2_-)$, $(5 \times 2_+)$, $(6 \times 7)$, and $(5 \times 3_-)$, going from left to right. There are analogous configurations when the mixed Reeb chords start on $L_1$ and end on $L_0.$ }
\label{fig:breaking5}
\vspace{3mm}
\end{figure}

\begin{figure}[htp]
\vspace{5mm}
\centering
\labellist
\pinlabel $\color{blue}L_1$ at -10 108
\pinlabel $\color{blue}\Lambda^+_1$ at 62 180
\pinlabel $\R \times \Lambda$ at 40 108
\pinlabel $\R \times \Lambda$ at 112 108
\pinlabel $\Lambda$ at 102 180
\pinlabel $\color{blue}L_1$ at 145 67
\pinlabel $\color{blue}\Lambda^-_1$ at 156 34
\pinlabel $\color{blue}L_1$ at 71 108
\pinlabel $\R\times\Lambda$ at 183 108
\pinlabel $\color{blue}L_1$ at 213 67
\pinlabel $\color{blue}L_1$ at 294 67
\pinlabel $\R\times\Lambda$ at 256 67
\pinlabel $\color{blue}\Lambda^-_1$ at 212 34
\pinlabel $\Lambda$ at 245 34
\pinlabel $\scriptstyle{\pm}$ at 18 82
\pinlabel $\scriptstyle{\pm}$ at 90 82
\pinlabel $\scriptstyle{\pm}$ at 176 80
\pinlabel $\scriptstyle{\pm}$ at 279 90
\endlabellist
\includegraphics[scale=1]{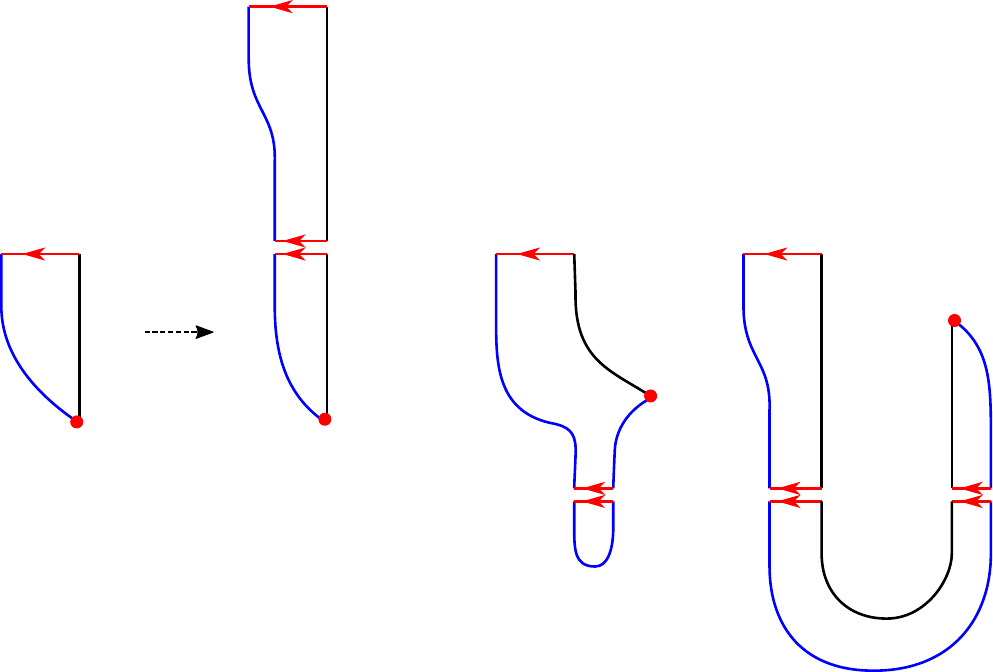}
\caption{Boundary points of the compactification of the moduli space $(6)$ of pseudoholomorphic curves with one puncture asymptotic to a positive mixed Reeb chord from $\R \times \Lambda$ to $L_1$, and one positive puncture at the unique interesection point. The broken configurations shown are in $(2_+ \times 6)$, $(6 \times 3_-)$, and $(2 \times 7 \times 1_-)$, going from left to right. There are analogous configurations in the case when the positive mixed Reeb chord starts on $L_1$ and ends on $\R \times \Lambda$.}
\label{fig:breaking6}
\vspace{3mm}
\end{figure}

\begin{prop}
\label{prop:MasterEQ}
For a generic almost complex structure,  the boundary of a one-dimensional moduli space is made of the following configurations of rigid moduli spaces. 
\begin{enumerate}
\item[($1_\pm$):]
 $(1_\pm \times 3_\pm), (2_\pm \times 1_\pm).$
\item[($2_\pm$):]
 $(2_\pm \times 2_\pm), (2_\pm \times 3_\pm)$
\item[($3_\pm$):]
$(3_\pm \times 3_\pm).$
\item[($4$):]
$(2_+ \times 4), (5 \times 5 \times 1_-),(1_+),(6 \times 6),(4 \times 3_-).$ 
\item[($5$):]
$(2_+ \times 5), (5 \times 2_-), (6 \times 7), (5 \times 3_-).$ 
\item[($6$):]
$(2_+ \times 6), (5 \times 7 \times 1_-), (6 \times 3_-).$ 
 \end{enumerate}
\end{prop}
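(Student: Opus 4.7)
The strategy is the standard SFT/Gromov--Hofer compactness argument combined with transversality and an index/action accounting. Given a sequence in a one-dimensional component without a convergent subsequence, the compactness theorem of \cite{BEHWZ} (extended to exact Lagrangian cobordisms with isolated transverse self-intersections as in \cite{Dimitroglou:Cthulhu}) produces a broken pseudoholomorphic building. The building is stratified by the $\R_\tau$-coordinate into three kinds of levels: upper symplectization levels with boundary on $\R\times\bar{\Lambda}^+$, a single middle level with boundary on $L$ (only present when $\mbox{Bd}=L$), and lower symplectization levels with boundary on $\R\times\bar{\Lambda}^-$. Because $L_0\cup L_1$ has a single transverse double point $q$, additional ``nodal'' breakings can also occur on the middle level, in which two components each carry a puncture at $q$ of opposite signs so that the primitives of $e^\tau\alpha|_{L}$ match up.

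For generic adjusted $J$, transversality holds for all simple disks with the listed asymptotic profiles; the hypothesis $|\gamma|>1$ for contractible periodic Reeb orbits (together with the action bound) excludes any planes asymptotic to closed Reeb orbits from the building, which in turn removes the SFT-type contributions to breakings. The exactness of $L_i$ and $\R\times\bar{\Lambda}^\pm$ combined with positivity of $d\alpha$-area rules out bubbling of closed holomorphic spheres and disk bubbles off the Lagrangian. Once these exotic configurations are excluded, each piece of the building is a component of one of the moduli spaces $(1_\pm)$--$(7)$. Summing the (formal) dimensions of the pieces and subtracting one $\R$-factor per symplectization level must return $1$; hence every piece is rigid modulo translations.

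It remains to enumerate the topologically possible decompositions of the cyclic asymptotic set $\Gamma$ that are compatible with the sign conventions and with the rigidity requirement. In the purely cylindrical cases $(1_\pm)$--$(3_\pm)$, this amounts to splitting $\Gamma$ across two symplectization levels, the only nontrivial constraint being that a level with no positive mixed asymptotic must lie ``above'' the one that has it; this forces the strata $(2_\pm\times 1_\pm)$, $(1_\pm\times 3_\pm)$, $(2_\pm\times 2_\pm)$, $(2_\pm\times 3_\pm)$, $(3_\pm\times 3_\pm)$ respectively. For $(4)$--$(7)$ one analyzes separately upward breaking (producing a rigid $(2_+)$ or $(1_+)$ on top of the $L$-level), downward breaking (a rigid $(3_-)$, $(2_-)$, or $(1_-)$ below), and nodal breaking at $q$ (producing a pair $(6\times 6)$, $(5\times 7)$, $(6\times 7)$, or $(5\times 7\times 1_-)$ depending on the signs and the asymptotic configuration that survives at $\partial$). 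The configurations listed in each case are precisely those decompositions of $\Gamma$ whose combined indices sum to the correct total and whose sign and action constraints are compatible.

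The main analytical obstacle is transversality and control of the compactified moduli space at the nodal degenerations at $q$. These require the refined gluing and transversality statements for disks with punctures at a Lagrangian double point, which were established in the closely related setting of \cite{Dimitroglou:Cthulhu} and are available in the present setting given the hypothesis $|\gamma|>1$ (c.f.~Remark \ref{rem:ClosedOrbits}), which bypasses the virtual perturbation machinery for Reeb orbits. Once these ingredients are in place, the enumeration above is exhaustive and the proposition follows.
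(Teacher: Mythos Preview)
Your proposal is correct and follows essentially the same approach as the paper: SFT compactness produces a building, a dimension count forces exactly one nontrivial component per symplectization level, the positive-puncture requirement (from Stokes' theorem) constrains the possible components, and the enumeration follows. The paper's proof is considerably terser than yours but hits the same two points.

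One small sharpening worth noting: for transversality you invoke genericity ``for all simple disks,'' but the paper's argument is slightly cleaner and avoids any simplicity hypothesis. Every disk in these moduli spaces has a \emph{distinguished} puncture---either a mixed chord or the double point $q$ (each of which appears exactly once in $\Gamma$), or, if all punctures are pure, the unique positive pure chord guaranteed by Stokes' theorem. One can then perturb $J$ (or the Lagrangian boundary condition) locally near that distinguished puncture as in \cite[Proposition 3.13]{GDRSurgery} or \cite{EES05b}, which achieves transversality without needing to argue that the curve is somewhere injective globally. This is the mechanism the paper uses, and it sidesteps any worry about multiply-covered components.
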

The Lagrangian cobordisms we study in Section \ref{sec:Rabinowitz} will not have any 1-dimensional moduli space of type ($7$).

\begin{proof}

The mixed chords (or double points) each appears exactly once in the word of chords defining a given moduli space. If a disk has only pure chords, then Stokes' theorem implies it has a unique positive pure chord. Therefore, transversality for these spaces can be achieved by either perturbing $J$ near this distinguished positive chord as in \cite[Proposition 3.13]{GDRSurgery}, or by perturbing the Lagrangian boundary condition near $p$ as in \cite{EES05b}.

These configurations are the only ones that can appear because: (1) a dimension argument implies exactly one (non-trivial) moduli space in a symplectization; (2) each moduli space has at least one positive puncture.

\end{proof}

\subsection{The Rabinowitz complex as a mapping cone}
\label{sec:Banana}
In the following we assume that $\Lambda_0=\Lambda$ is fixed, and that $\Lambda_1^t=\phi^t_{\alpha, H_t}(\Lambda_1)$ is a Legendrian isotopy of $\Lambda_1^-=\Lambda_1$ for $- \le t \le +.$ We write $\bar{\Lambda}^t \coloneqq \Lambda_0 \cup \Lambda_1^t$, while $\bar{\Lambda}=\Lambda_0 \cup \Lambda_1$ as before. 

Fix $t \in [-,+]$ as in Section \ref{sec:ModuliSpace}. Choose a cylindrical almost complex structure that is regular for the moduli spaces involved (i.e.~that consist of disks for which there is a distinguished asymptotic that only appears once, and where the moduli space is of expected dimension at most two before taking the quotient by the action of translation). Recall that this is possible by \cite[Proposition 3.13]{GDRSurgery}.  Assume there exists augmentations  $\varepsilon^i$ defined for $\mathcal{A}^{l}(\Lambda_i)$, $i=0,1,$ which can be identified with an augmentation $\varepsilon_0$ of the DGA $\mathcal{A}^l_{\OP{pure}}(\bar{\Lambda}^0)$ that is generated by the pure chords.  In \cite[Lemma 3.4]{DRS2}, we describe how to define an augmentation $\varepsilon_t$ for
$\mathcal{A}^{l-l(t)}_{\OP{pure}}(\bar{\Lambda}^t)$
where 
$$
l(t) = \int_0^t \left(\max_{y \in Y} H_\tau(y) - \min_{y \in Y} H_\tau(y) \right) d\tau.
$$
The setting for \cite{DRS2} was limited to $(Y,\alpha) = (P \times \R_z, dz- \theta).$
However, \cite[Lemma 3.4]{DRS2} used only that the DGA underwent a stable-tame isomorphism.
Thus, Proposition \ref{prop:Bifurcation} implies that  \cite[Lemma 3.4]{DRS2} applies to our more current general setting.

Since the stable tame isomorphism given by Proposition \ref{prop:Bifurcation} are the DGA-quasi isomorphism induced by the Lagrangian trace cobordisms, we get

\begin{lem}
\label{lem:PullBackAug}
For a sufficiently fine subdivision $t_1<\ldots<t_N,$ the augmentation $\varepsilon_{t_i}$ can, moreover, be assumed to coincide with the pull-back of the augmentation $\varepsilon_{t_{i-1}}$ under the DGA-morphism induced by the exact Lagrangian trace of the isotopy $\Lambda_1^t$ for $t \in [t_{i-1},t_i]$.
  \end{lem}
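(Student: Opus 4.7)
The plan is to identify the augmentation $\varepsilon_{t_i}$ built inductively in \cite[Lemma 3.4]{DRS2} with the explicit pull-back $\varepsilon_{t_{i-1}} \circ \Phi_{[t_{i-1},t_i]}$, by arranging that the stable-tame isomorphism entering the former construction is precisely the Lagrangian trace map furnished by Proposition \ref{prop:Bifurcation}.

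First I would revisit the construction in \cite[Lemma 3.4]{DRS2}: the inductive step defines $\varepsilon_t$ at the next time by pulling back $\varepsilon_{t_{i-1}}$ along \emph{any} filtered stable-tame isomorphism between the pure-chord sub-DGAs at $\bar{\Lambda}^{t_{i-1}}$ and $\bar{\Lambda}^{t_i}$ (truncated at the appropriate action level). Only the existence of such an isomorphism with the required filtration behavior is used; the particular choice is immaterial for the algebraic procedure, and different admissible choices will give the same pull-back on the relevant truncation because they differ by filtered tame automorphisms that preserve the sub-algebra on which $\varepsilon_{t_{i-1}}$ is defined.

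Next I would invoke Proposition \ref{prop:Bifurcation}, applied to the segment $\Lambda_1^t$ for $t \in [t_{i-1},t_i]$ with $\Lambda_0=\Lambda$ fixed throughout. For a sufficiently fine subdivision, the restriction $\Phi_{[t_{i-1},t_i]}|_{\mathcal{A}^l(\bar{\Lambda}^{t_i})}$ is, up to conjugation by filtered tame automorphisms of source and target, a composition of tame automorphisms and (de)stabilizations, i.e.~a stable-tame isomorphism of the type demanded by the \cite[Lemma 3.4]{DRS2} construction. Substituting this concrete stable-tame isomorphism for the abstract one used there produces $\varepsilon_{t_i} = \varepsilon_{t_{i-1}} \circ \Phi_{[t_{i-1},t_i]}$ as required.

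The main obstacle is bookkeeping the action filtrations. As flagged in Remark \ref{rem:ClosedOrbits2}, $\Phi_{[t_{i-1},t_i]}$ need not send $\mathcal{A}^l(\bar{\Lambda}^{t_i})$ into $\mathcal{A}^l(\bar{\Lambda}^{t_{i-1}})$, but only into $\mathcal{A}^{e^{2\delta}l}(\bar{\Lambda}^{t_{i-1}})$ by Lemma \ref{lem:ActionBound}.2, where $\delta$ is the length of the trace cobordism on the $i$-th interval. For the composition $\varepsilon_{t_{i-1}} \circ \Phi_{[t_{i-1},t_i]}$ to be defined on $\mathcal{A}^{l-l(t_i)}_{\OP{pure}}(\bar{\Lambda}^{t_i})$, I would refine the subdivision so that the oscillation increment $l(t_i)-l(t_{i-1})$ dominates the dilation caused by the factor $e^{2\delta}$; this is achievable by continuity of $l(t)$ together with Lemma \ref{lem:ActionBound}.1, which lets $\delta$ be made arbitrarily small. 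Once this is arranged and Proposition \ref{prop:Bifurcation} applies on each interval, the identification of $\varepsilon_{t_i}$ with the trace pull-back is immediate.
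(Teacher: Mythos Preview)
Your proposal is correct and takes essentially the same approach as the paper, which justifies the lemma in a single sentence preceding its statement: Proposition \ref{prop:Bifurcation} shows that the trace cobordism DGA-morphism is itself a stable-tame isomorphism, so substituting it for the abstract STI in the inductive construction of \cite[Lemma 3.4]{DRS2} makes $\varepsilon_{t_i}$ the trace pull-back by definition. Your aside that different admissible STIs would necessarily give the \emph{same} pull-back is not needed for the argument (and is not obviously true in general), but since you do not actually rely on it---you explicitly choose the trace map---this is harmless.
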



In the below, let $x_{01}$ (resp. $x_{10}$) indicate a mixed chord starting on $\Lambda$ (resp. $\Lambda^t_1$) and ending on $\Lambda^t_1$ (resp. $\Lambda$). All other chords $z_i$ below are pure.
We define the action  of $x_{01}$ to be
$$\mathfrak{a}(x_{01}) \coloneqq  +\int_{x_{01}} \alpha>0$$
and of $x_{10}$ to be
$$\mathfrak{a}(x_{10}) \coloneqq  -\int_{x_{10}} \alpha<0.$$
Fix $a_t,b_t \in \R$ such that
\begin{equation}
\label{eq:parameters}
0< b_t-a_t < l- l(t).
\end{equation}
Denote by
$$C_*([a_t,b_t)) = C_*(\R_{\ge0} \cap [a_t,b_t)) \quad \text{and}\quad C^*([a_t,b_t))=C^*(\R_{\le0} \cap [a_t,b_t))$$
the ``linearized Legendrian contact homology complex'' and the ``dual co-complex,'' respectively, where the former is generated by the mixed chords $x_{01}$ while the latter is generated by the mixed Reeb chords $x_{10}$. In both cases we assume that  the action $\mathfrak{a}$ of the generators  is contained inside the interval $[a_t,b_t)$.

As prescribed below, the differential restricted to $C_*([a_t,b_t))$ (resp.~$C^*([a_t,b_t))$) is the usual differentials of the Legendrian contact homology (co)complex linearized by the augmentation $\varepsilon_t$ that counts trips with one positive (resp. negative) mixed input Reeb chord and one negative (resp. positive) mixed output Reeb chord; see below for the precise formula.

\begin{rem}
For the co-complex, the differential increases the Reeb chord length $\ell$, hence decreases the above action $\mathfrak{a}$. This is why when taking the quotient complex, we consider $C^*([a_t,b_t))$ and not $C^*((a_t,b_t]).$
  \end{rem}

Recall that the linearized Legendrian contact (co)homology complex of a pair of Legendrians can be endowed with a grading that depends on additional choices of the two Legendrians involved; see \cite{Dimitroglou:Cthulhu} as well as Appendix \ref{sec:maslovpotential}. For simplicity we restrict ourself to the case when the first Chern class of $(Y,\alpha)$ vanishes, which means that we can choose a symplectic trivialization of the (square of the) determinant $\C$-line bundle $\det \xi \to Y$. Given the choice of a homotopy class of such a symplectic trivialization, together with choices of Maslov potentials of $\Lambda_i$ as described in Appendix \ref{sec:maslovpotential}, we get a canonically induced $\Z$-grading for which the linearized differential (resp. codifferential) is of degree $-1$ (resp. $1$). In addition, the choice of a Maslov potential can be naturally extended over a Legendrian isotopy. Note that a loop of Legendrians can induce a non-trivial action on its set of Maslov potentials (see Lemma \ref{lem:actionshift} for an example).

For a pair of mixed chords $x,y$ in either complex, and ordered (possibly empty) sets of pure chords $\mathbf{z},\mathbf{z}'$ define
$$\mathbf{m}_{\bar{\Lambda}^t} (x^+, y^\pm) = \sum_{\mathbf{z}, \mathbf{z}'} \sharp \widehat{\mathcal{M}}^0(x^+\mathbf{z}^- y^\pm \mathbf{z}'^-; \bar{\Lambda}^t) \varepsilon_{t}(\mathbf{z} \mathbf{z}').$$
Note that the action of the individual pure chord $z$  in $\mathbf{z}$ or $ \mathbf{z}'$ is less than $b_t-a_t,$ 
and so $\varepsilon_t(z)$ is defined.
We suppress the subscript notation when we define the maps
\begin{gather*}
d_{01}: C_*([a_t,b_t)) \rightarrow C_*([a_t,b_t)),\\
d_{10} : C^*(
{{[a_t,b_t)}}
)
 \rightarrow C^*([a_t,b_t)),\\
B : C_*([a_t,b_t)) \rightarrow C^*([a_t,b_t]),
\end{gather*}
where
\begin{eqnarray*}
d_{01} (x_{01}) & = & \sum_{\{y_{01}| \mathfrak{a}(y_{01}) \in [a_t,b_t) \cap \R_{>0} \}} \mathbf{m}_{\bar{\Lambda}^t} (x_{01}^+, y_{01}^-)y_{01}  \\
d_{10} (x_{10}) & = & \sum_{\{y_{10}| \mathfrak{a}(y_{10}) \in [a_t,b_t) \cap \R_{<0} \}} \mathbf{m}_{\bar{\Lambda}^t} (y_{10}^+, x_{10}^-)y_{10},\\
B (x_{01})  & = & \sum_{\{y_{10}| \mathfrak{a}(y_{10}) \in [a_t,b_t) \cap \R_{<0} \}} \mathbf{m}_{\bar{\Lambda}^t} (x_{01}^+, y_{10}^+)y_{10}.
\end{eqnarray*}

Note that $d_{01}$ and $d_{10}$ is the  linearized  Legendrian contact homology differential and co-differential, respectively.

\begin{prop}
\label{prop:BananaComplex}
The matrix $d_{B} := \begin{pmatrix}
d_{01} & 0 \\
B & d_{10}
\end{pmatrix}$
is a filtered mapping cone differential for the filtered chain map $B.$
\end{prop}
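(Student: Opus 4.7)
The plan is to establish $d_B^2 = 0$ by expanding the block matrix and then verifying the filtration-decreasing property separately. Writing out the product, $d_B^2 = 0$ is equivalent to three identities: $d_{01}^2 = 0$, $d_{10}^2 = 0$, and $B\circ d_{01} + d_{10}\circ B = 0$. The first two are the standard statements that the linearized Legendrian contact (co)homology differential squares to zero in the presence of the augmentation $\varepsilon_t$, and the third is the (signed) chain map property of $B$, which is the substantive new content.

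For the chain map identity, I would analyze via Proposition \ref{prop:MasterEQ} the Gromov--Hofer compactification of a one-dimensional moduli space of type $(1)$ on $\bar{\Lambda}^t$ with prescribed positive punctures at $x_{01}^+$ and $y_{10}^+$ and pure negative chord asymptotes (augmented by $\varepsilon_t$). The boundary splits into strata $(1 \times 3)$ and $(2 \times 1)$. The $(1 \times 3)$ strata, in which a $(3)$-type pure disk breaks off at a negative pure chord asymptote, have total signed count given by $\varepsilon_t(\partial_{\mathrm{pure}} z)$ summed over the relevant pure chord $z$; these vanish by the augmentation property $\varepsilon_t \circ \partial_{\mathrm{pure}} = 0$. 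The $(2 \times 1)$ strata split according to the type $w_{01}$ or $w_{10}$ of the glued mixed chord; tracking the boundary arcs on $\Lambda$ and $\Lambda_1^t$ at each puncture shows that $w = w_{10}$ produces a composition contributing to $d_{10}\circ B$ while $w = w_{01}$ produces one contributing to $B \circ d_{01}$. With coherent orientations these two families cancel. The proofs of $d_{01}^2 = 0$ and $d_{10}^2 = 0$ are entirely analogous, starting from one-dimensional $(2)$-type moduli spaces: the $(2\times 2)$ boundary strata produce the squared differentials, while the $(2 \times 3)$ strata vanish by the same augmentation argument.

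For the filtration statement, strict action-decrease follows from Stokes' theorem applied in $(\R\times Y, d(e^\tau\alpha))$: the symplectic area of any contributing curve is strictly positive and equals the signed sum of asymptotic chord lengths. For $d_{01}$ this yields $\ell(x_{01}) > \ell(y_{01}) + \sum_i \ell(z_i) \geq \ell(y_{01})$, whence $\mathfrak{a}(x_{01}) > \mathfrak{a}(y_{01})$; the argument for $d_{10}$ is symmetric once one recalls $\mathfrak{a}(x_{10}) = -\ell(x_{10})$. For $B$, both mixed asymptotes are positive so no relation between their lengths is forced by area, but the strict decrease $\mathfrak{a}(x_{01}) > 0 > \mathfrak{a}(y_{10})$ is immediate from the sign convention on $\mathfrak{a}$.

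Finally, I would verify that the augmentations and action windows are compatible: the hypothesis $b_t - a_t < l - l(t)$ from (\ref{eq:parameters}) ensures that every pure chord asymptote $z$ of any disk contributing to $\mathbf{m}_{\bar\Lambda^t}(x^+, y^\pm)$ has length bounded by the action difference $\mathfrak{a}(x) - \mathfrak{a}(y) < b_t - a_t < l - l(t)$, which lies in the domain of definition of $\varepsilon_t$ guaranteed by Lemma \ref{lem:PullBackAug}. The main obstacle is not any individual algebraic identity, but rather the bookkeeping that coordinates the augmentations, the action windows, and the coherent orientations; once the SFT compactness and transversality summarized in Section \ref{sec:ModuliSpace} are granted and the sign conventions are fixed, the identity $d_B^2 = 0$ is a direct consequence of the master equation in Proposition \ref{prop:MasterEQ}.
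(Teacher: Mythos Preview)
Your proof is correct and follows essentially the same approach as the paper: both invoke Proposition~\ref{prop:MasterEQ}($1_\pm$) and ($2_\pm$), identify the $(2\times 1)$ boundary strata with the compositions $B\circ d_{01}$ and $d_{10}\circ B$ (resp.\ $(2\times 2)$ with $d_{01}^2$, $d_{10}^2$), and absorb the $(1\times 3)$ and $(2\times 3)$ strata via the augmentation property $\varepsilon_t\circ\partial_{\mathrm{pure}}=0$. You supply more detail than the paper does---the explicit Stokes argument for strict action-decrease and the check that the action-window constraint \eqref{eq:parameters} keeps the pure chords within the domain of $\varepsilon_t$---but these are already implicit in the text preceding the proposition, so the arguments are in substance the same.
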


\begin{proof}
Proposition \ref{prop:MasterEQ}($1_\pm$) and \ref{prop:MasterEQ}($2_\pm$) imply the matrix squares to zero. 
For example, suppose $x_{01}\in C_*([a_t,b_t))$ and $z_{10} \in C^*([a_t,b_t))$. Then
\begin{eqnarray*}
\lefteqn{\langle d_B^2 x_{01}, z_{10} \rangle =}\\
& = &\sum_{ y_{01} } \mathbf{m}_{\bar{\Lambda}^t} (x_{01}^+, y_{01}^-) 
\mathbf{m}_{\bar{\Lambda}^t} (y_{01}^+, z_{10}^+) 
+  \sum_{ y_{10}} \mathbf{m}_{\bar{\Lambda}^t} (x_{01}^+, y_{10}^+) \mathbf{m}_{\bar{\Lambda}^t} (y_{10}^-, z_{10}^+)
\end{eqnarray*}
which vanishes by Proposition \ref{prop:MasterEQ}($1_\pm$).
\end{proof}

\begin{defn}
\label{defn:MCC}
Let $(a_t,b_t,\varepsilon_t, l-l(t), \bar{\Lambda}^t)$ denote the auxiliary information used to define this cone complex. We denote this {\bf{mapping cone complex}} as 
$$RFC_*^{[a_t,b_t)}(\Lambda_0,\Lambda^t_1;\varepsilon_t) = (C_*(t)[a_t, b_t) \oplus C^{n-*-2}(t)[a_t, b_t), d_B)$$
which naturally is a filtered chain complex with action window $[a_t,b_t)$ and filtration induced by $\mathfrak{a}$.
\end{defn}

\begin{rem}
\begin{enumerate}
\item
  The sign change and shift of grading of the second summand is needed in order for the summand $B$ of the differential $d_B$ to be of degree $-1$. The relevant index for the disks with two positive punctures whose count defines $B$ was computed in \cite[Lemma 2.5]{Ekholm:Duality}.

Further note that, with this convention,  the degree of a generator is continuous under deformations through transverse chords, even when the length of the chord at some point becomes zero, such that the component of the starting and endpoints become interchanged; this readily follows from the definition of the grading in Appendix \ref{sec:maslovpotential}.

The mapping cone complex also can be defined without a grading. 
While the grading is necessary (and warranted) to prove Theorem \ref{thm:RP}, the other results in Section \ref{sec:Introduction} need only the ungraded complex.

  \item
  We suppress the upper bound notation $l-l(t)$  in an attempt to reduce the overbearing set of decorations.
  To be consistent with similar concepts in other literature, we sometimes call the mapping cone complex the {\bf Rabinowitz Floer complex}, and when the Legendrian is augmentable (so that we can choose $b_t=l = -a_t=\infty$)  denote it by $RFC_*(\Lambda, \Lambda_t).$ 
  \item
   Note that the Legendrian contact cohomology complex $C^*((-\infty,a])$ is the degree-wise dual of a possibly infinite dimensional complex. In such a case the chords $x_{10}$ do not form a basis of $RFC_*(\Lambda,\Lambda_t)$, since one must also allow formal infinite sums of such chords. In other words, in each fixed degree $i \in \Z$, the filtered vector space $RFC_i^{(-\infty,+\infty)}(\Lambda,\Lambda_t)$ is the inverse limit of the directed system
  $$ RFC_i^{[0,+\infty)} \leftarrow  RFC_i^{[-1,+\infty)} \leftarrow RFC_i^{[-2,+\infty)} \leftarrow \ldots$$
  of filtered vector spaces and canonical quotient maps induced by the filtration.
\end{enumerate}
  \end{rem}

  \begin{thm}
    \label{thm:pwc}
    Assume that the following is satisfied:
    \begin{itemize}
    \item $l-l(-)>0$ is smaller than the length of any contractible periodic Reeb orbit $\gamma$ of degree $|\gamma|\le 1$;
    \item $\varepsilon_{-}$ is an augmentation of the sub-DGA $\mathcal{A}_{\OP{pure}}^{l-l(-)}(\Lambda_0\cup \Lambda_1)$ generated by pure Reeb chords; and
    \item $\Lambda^t_1$ is generated by a Legendrian isotopy of oscillation
      $$l(t)=\int^t_{-} \left(\max_{y \in Y} H_\tau(y) - \min_{y \in Y} H_\tau(y) \right) d\tau.$$
    \end{itemize}
    Then, as long as $l-l(t) > b_t-a_t > 0$  and $[a_t,b_t)$ is a finite interval,  there exists a sequence of augmentations $\varepsilon_t$ of the DGAs $\mathcal{A}_{\OP{pure}}^{l-l(t)}(\Lambda_0 \cup \Lambda^t_1)$ that makes $RFC^{[a_t,b_t)}_*(\Lambda_0,\Lambda_1^t;\varepsilon_t)$ into 
 a  well-defined PWC family of complexes with action-window $[a_t,b_t)$. In particular, the complexes undergo the deformations specified by the Barcode Proposition \ref{prop:Barcode} as $t$ varies.

In the case when, $l=+\infty$, $a_t=-\infty$, and $b_t=+\infty$ holds for all $t$, then the homology of the entire complex is invariant under Legendrian isotopy. In addition, in any smooth family of finite action windows, we may assume that we have a PWC family of complexes.
\end{thm}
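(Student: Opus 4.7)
The plan is to reduce the statement to the abstract framework of Section \ref{sec:AlgebraInvariance}, which tells us precisely when a family of mapping cone complexes forms a PWC family. To this end, I would first establish the existence and coherence of the augmentations $\varepsilon_t$, then construct chain maps of small degree relating $RFC^{[a_t,b_t)}$ at nearby times, and finally invoke Proposition \ref{prop:PWC}.

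First, I would pick any sufficiently fine subdivision $-=t_0<t_1<\dots<t_N=+$ such that each interval satisfies the conclusion of Proposition \ref{prop:Bifurcation} and Lemma \ref{lem:ActionBound}.(1), and such that the trace cobordism over each subinterval has length less than an arbitrarily small $\delta>0$. By Lemma \ref{lem:PullBackAug}, augmentations $\varepsilon_{t_i}$ of $\mathcal{A}_{\OP{pure}}^{l-l(t_i)}(\Lambda_0\cup\Lambda_1^{t_i})$ exist by iteratively pulling back $\varepsilon_-$ along the DGA-morphisms induced by the trace cobordisms; the hypothesis that $l-l(t)$ stays below $\hbar$ guarantees these morphisms are defined and that all the bifurcation analysis of Proposition \ref{prop:Bifurcation} applies. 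For $t$ between subdivision points I would interpolate $\varepsilon_t$ by again pulling back, using that the action-shift is controlled by $l(t)-l(t_i)$.

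Next, for each subinterval $[t_i,t_{i+1}]$, the Lagrangian trace cobordism and its inverse give rise to linearized and $B$-compatible chain maps
\[
\phi_{01}\colon C_*(t_i)\to C_*(t_{i+1}),\qquad \psi_{01}\colon C_*(t_{i+1})\to C_*(t_i),
\]
\[
\phi_{10}\colon C^*(t_i)\to C^*(t_{i+1}),\qquad \psi_{10}\colon C^*(t_{i+1})\to C^*(t_i),
\]
together with a chain homotopy $H\colon B_{t_i}\sim \psi_{10}B_{t_{i+1}}\phi_{01}$ obtained from a count of disks in the generic one-parameter family of cobordisms interpolating between the concatenation and the trivial cylinder (parallel to Equation~(\ref{eq:homotopy1})). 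The degree bounds are controlled by Lemma \ref{lem:ActionBound}.(2): choosing $N$ large makes the concordance length arbitrarily small so the factor $e^{2\delta}$ is as close to $1$ as desired, which means all these maps and homotopies have degree $\epsilon$ that can be made arbitrarily small compared to the gap size of the finitely many generators lying inside $[a_t,b_t)$. Precisely this is the input required for assumptions \ref{A1} and \ref{A2} of Section \ref{sec:AlgebraInvariance}, and so Lemma \ref{lem:smalldeg} yields that the compositions $\psi\phi$ and $\phi\psi$ of the induced maps on the cones $RFC^{[a_{t_i},b_{t_i})}\to RFC^{[a_{t_{i+1}},b_{t_{i+1}})}$ are chain homotopic to filtered automorphisms via homotopies of degree $\epsilon$.

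With these ingredients, Proposition \ref{prop:PWC} applies directly to the family $C(t)=RFC^{[a_t,b_t)}_*(\Lambda_0,\Lambda_1^t;\varepsilon_t)$: generic birth/death moments of mixed chords (handled by Lemma \ref{lem:bd}), generic crossings of the action-window endpoints, and the handle-slide events arising from isolated index $-1$ disks produce the PWC model $D(t)$, and the barcode evolves according to Proposition \ref{prop:Barcode}. For the final assertion with $l=+\infty$ and $a_t=-\infty,b_t=+\infty$, the induced maps on cones are genuine quasi-isomorphisms by the five-lemma argument of Section \ref{sec:AlgebraInvariance} applied to the full complex, so homology is invariant. The hardest step is verifying that the chain homotopy $H$ really does provide a homotopy commutative square on the mapping cones, because the map $B$ is built from the disks of types $(4)$ and $(5)$ of Proposition \ref{prop:MasterEQ}; the requisite identity follows by examining the one-parameter moduli spaces of such disks over the family of cobordisms and using the boundary configurations $(2_+\times 4)$, $(5\times 5\times 1_-)$ and $(6\times 6)$ to produce the homotopy together with the interpolation between $B_{t_i}$ and $\psi_{10}B_{t_{i+1}}\phi_{01}$. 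The control over the filtration is then automatic from Lemma \ref{lem:ActionBound}.(2) applied to each component of the broken configuration.
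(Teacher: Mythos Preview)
Your overall architecture matches the paper's: subdivide finely, produce augmentations via Lemma~\ref{lem:PullBackAug} and Proposition~\ref{prop:Bifurcation}, verify axioms~\ref{A1} and~\ref{A2}, then invoke Proposition~\ref{prop:PWC}. However, there is a genuine gap and a couple of technical misidentifications.

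The gap is the \emph{double-point case}. The novelty of Theorem~\ref{thm:pwc} over the analogous result in \cite{DRS2} is precisely that $\Lambda_0$ and $\Lambda_1^t$ are allowed to intersect during the isotopy; at such a moment a mixed chord shrinks to length zero and then changes direction (from a $c_{10}$ to a $c_{01}$, or vice versa). Your proposal treats the maps $\phi_{01},\phi_{10}$ as the na\"{i}ve cobordism counts, but when the trace cobordism $L$ has a transverse double point $q$, the correct $\phi_{01}$ must include an extra term $\mathbf{m}_L(x_{01}^+,q^+)\,c_-$ (see Equation~(\ref{eq:phi})), and proving that this modified map is a chain map requires the delicate analysis of Lemma~\ref{lem:SmallDisk}, which pins down the unique small strips $u^\pm$ near $q$ via a monotonicity argument. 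Without this, your $\phi_{01}$ and $\phi_{10}$ do not even land in the right complexes (the generator $c_-$ lives in $C^*$ after the crossing but in $C_*$ before), and axiom~\ref{A1} fails. You mention the boundary configuration $(6\times 6)$ at the end, which only exists when there is a double point, so you seem aware the issue is lurking, but you have not addressed it.

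Two smaller corrections. First, the map $B$ is \emph{not} built from disks of types $(4)$ or $(5)$; it is the ``banana'' count of type $(1_\pm)$ (two positive mixed punctures, boundary on the cylinder $\R\times\bar\Lambda^t$). The type-$(4)$ disks (two positive mixed punctures, boundary on the cobordism $L$) are what define the homotopy $H_\alpha$ in Proposition~\ref{prop:AxiomA2}. Second, your description of $H$ as a count over a one-parameter family of cobordisms parallel to Equation~(\ref{eq:homotopy1}) is wrong: the paper's $H=H_\alpha+H_\beta$ is a count of rigid disks on the \emph{single} trace cobordism $L$, and the verification of~\ref{A2} in Proposition~\ref{prop:AxiomA2} comes from the boundary of the one-dimensional moduli space of type $(4)$ on that fixed $L$, not from a parametric family. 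The parametric argument of~(\ref{eq:homotopy1}) is used only at the DGA level to establish Lemma~\ref{lem:HomotopyInverse}.
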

 The analogous invariance also holds when the first Legendrian is deformed by a Legendrian isotopy, while the second copy is being fixed. The proof is completely analogous and left to the reader.

\begin{rem}
\label{rem:OtherNorms}
\begin{itemize}
  \item Theorem \ref{thm:pwc} holds if we replace $l(t)$ with the oscillation underlying the Shelukhin--Chekanov--Hofer norm or the Usher norm $$
l_1(t) = \int_-^t \max_{y \in Y} H_\tau(y) d\tau
\quad
\mbox{or}
\quad
l_2(t) = l(t) + \max_{y \in Y} |g(y)|.
$$
(Here $g(y)$ is the conformal factor for the time-$t$ contactomorphism defined by $H.$ See \cite[Definition 10.1]{RosenZhang}.)
This follows because $l(t) \le l_1(t), l_2(t).$

\item If $H$ defines a contact-form-preserving contactomorphism, i.e., if the conformal factor is 0, then the length of the pure chords are preserved. It then follows from Proposition \ref{prop:Bifurcation} that the stable-tame isomorphism class of the Chekanov--Eliashberg algebra $\mathcal{A}^l$ does not depend on $t$. In particular $\mathcal{A}^l$ has an augmentation for all $t$, and we can improve Theorem \ref{thm:pwc} by dropping the condition $l-l(t)>0$.
  \end{itemize}
\end{rem}

To prove Theorem \ref{thm:pwc}, we write the isotopy as a concatenation of short isotopies.
Below any action level upper bound, we can assume there is at most one of the following singular moments: a mixed chord that enters or leaves the action window $[a_t,b_t);$  the birth/death of a pair of chords occurs; the action of exactly two mixed chords coincide; the action of a pure chord equals $l - l(t);$ and the action of a mixed chord vanishes.
All of these moments, except the last one, are considered in \cite[Proposition 3.5]{DRS2}, which is the equivalent of Theorem \ref{thm:pwc} in the special case $(Y, \alpha) = (P \times R_z, dz -  \theta).$  The case when the action of a mixed chord vanishes corresponds to when the Legendrians $\Lambda_0$ and $\Lambda_1^t$ intersect. By genericity we can always assume that there are only finitely many such moments in the family. The invariance at these moments is taken care of in Section \ref{sec:onedoublepoint} below.

To recycle the reasoning of \cite[Proposition 3.5]{DRS2}, we will apply the algebraic machinery from Section \ref{sec:Algebra}, in particular Propositions \ref{prop:Barcode} and \ref{prop:PWC}, to our current geometric set-up. 
(In particular, the algebraic interpretation of bifurcation analysis done in \cite{DRS2} is replaced by Proposition \ref{prop:Bifurcation}.)
And to apply the Section \ref{sec:Algebra} results, we need to establish the hypotheses \ref{A1} and \ref{A2} for Lemma \ref{lem:smalldeg}.

 Finally, in the case when $l=+\infty$, $a_t=-\infty$, $b_t=+\infty$, the proof of the invariance of the complex is simpler, since we do not need to care about whether the maps consist of standard bifurcations, i.e.~handle-slides or birth/deaths. (In other words, we do not need Lemma \ref{lem:HomotopyInverse}). Instead, the weaker property of quasi-isomorphism follows by standard invariance properties of the linearized Legendrian contact homology as in \cite{Ekholm:Floer}, combined with the treatment of the double point and hypothesis \ref{A2} in Section \ref{sec:onedoublepoint} below.

    
\subsection{Mapping cone complex invariance during a short Legendrian isotopy}
\label{sec:BananaInvariance}

We now consider a varying $t \in [-,+].$
Assume that the interval $[-,+]$ is small.
As in Section \ref{sec:ModuliSpace},  $L$ denotes the Lagrangian isotopy-trace concordance of $\bar{\Lambda}^t.$
Since the oscillatory norm $\epsilon = l(+) - l(-)$ isotopy from $\bar{\Lambda}_-$ to $\bar{\Lambda}_+$ is arbitrarily small, 
we assume that there are no pure chords of length in the interval $(l-l(+), l-l(-)).$
Thus we can use the same action window $[a_-, b_-) = [a_+,b_+) = [a,b)$ for both complexes.
Moreover, we assume that no mixed chords enter or leave this action window, and that there are no birth/death pairs of chords when $t \in [-,+].$

\subsubsection{One double point}
\label{sec:onedoublepoint}
Suppose $\Lambda \cap  \Lambda_1^t  = \emptyset$ when $t \in [-,+]\setminus\{0\}$ and that there exists a unique intersection point 
$\Lambda \cap \Lambda_1^0  =  \{q\}$
 that is transverse in a one-parameter family. In particular, the double point arises as a transverse family of mixed Reeb chords $c_t$ whose action $\mathfrak{a}(c_t)$ changes sign at $t=0$.

\begin{rem}\label{rem:cNegative}
For the remainder of this discussion, we assume that $c_-$ runs from 
$\Lambda_1^-$ 
to $\Lambda,$ and hence $\mathfrak{a}(c_-) < 0 < \mathfrak{a}(c_+).$ The case when $c_-$ runs from $\Lambda$ to $\Lambda_1^-$   
follows from this case by considering the reverse-time isotopy.
\end{rem}

For some sufficiently small interval $[-,+],$ after a possible $C^0$-small perturbation, we can
choose a contact-form preserving Darboux neighborhood $U \subset Y$ of $q$ such that the following hold.
\begin{itemize}
\item 
 There is a contact-form preserving identification
  $$U \cong (B_1)_x \times (B_1)_y \times [-\epsilon,\epsilon]_z \subset (B_1)_x \times \R^n_y \times \R_z= 
   ( J^1(B_1),  \eta (dz - p_x dx))$$
for some $\eta >0$ that we can make arbitrarily small, and 
where $B_r \subset \R^n$ is the disk centered at ${\mathbf{0}}$ of radius $r;$ 
\item
$\Lambda \cap U$ is the ${\mathbf{0}}$-section $j^10$;
\item
 $ \Lambda_1^t  \cap J^1(B_{1}) = j^1(f_t)$ for some $f_t \colon B_{1} \rightarrow \R$ that smoothly varies with $t$;
\item
For all $x \in B_{1} \setminus B_{2/3},$ $f_t(x)$ is independent of $t$;
\item
For all $x \in B_{1/3},$ $f_t(x) = \|x\|^2  +  t$;
\item
For all $x \in B_{1},$ $(df_t)^{-1}(0) ={\mathbf{0}}$;
\item
The chord $c_t \subset \mathbf{0} \times \mathbf{0} \times \R_z$  is the unique mixed chord of $\Lambda$ and $ \Lambda_1^t $ in $U$.
\end{itemize}

In order to describe the trace cobordism it is useful to utilize the exact symplectomorphism
\begin{gather*}(\R_\tau \times \R^n_x \times \R^n_y \times \R_z,d(e^\tau(dz-ydx))) \xrightarrow{\cong} (\R_{>0})_{q} \times \R^n_x \times \R^n_{p_x} \times \R_{p_q},\\
  (\tau,x,y,z) \mapsto (e^\tau,x,e^\tau y,z),
\end{gather*}
where the target is equipped with the symplectic form $d(q\, dp_q-p_x\,dx)$. For the choice of primitive $d(-p_q\,dq-p_x\,dx)$ one can describe exact Lagrangians via their front projection to the contactization
$$ (((\R_{>0})_{q} \times \R^n_x \times \R^n_{p_x} \times \R_{p_q}) \times \R_Z, dZ-p_q\,dq-p_x\,dx).$$
 The non-cylindrical part of the immersed Lagrangian trace cobordism inside $\R \times U$ is constructed in these coordinates via the front projection shown in Figures \ref{fig:intersection} and \ref{fig:disc}. We can assume that:
\begin{itemize}
\item The Lagrangian trace is cylindrical away from the intersection point, i.e.
  $$L \cap (\R_\tau \times (Y \setminus U)) = \R_\tau \times ( (\Lambda_1^+ \cup \Lambda)  \setminus U) = \R_\tau \times (( \Lambda_1^- \cup \Lambda) \setminus U);$$
holds for some small neighborhood $U$ of $q$;
\item there exists $A<0<B$ so that $L \cap \{A \le \tau \le B\}$ contains the unique intersection point $q$ in the level $\{\tau=0\}$
\item
   In the above identification of $\R\times U$ with a subset of $\R \times J^1B_1$, the non-cylindrical part of $L$ is identified with the exact Lagrangian immersion whose front is as shown in Figures \ref{fig:intersection} and \ref{fig:disc}. (The figures depict $L$ in the coordinates $(q=e^\tau,x,p,p_x,Z)$ on the contactization of the symplectization $\R \times U$ described above.)
   \item The pull-back of the Liouville form $e^\tau\alpha$ to $L$ has a primitive that vanishes outside of a compact subset; this is equivalent to the front projection in Figure \ref{fig:intersection} to consist of sheets that are of the form $e^\tau f(x)$ outside of a compact subset.
 \item In the subset of $\R \times U \hookrightarrow \R \times J^1B_1 $ that lives inside $\R \times J^1B_{1/3}$, the immersed trace $L$ is, moreover, of the form
   \begin{equation}
     \label{eq:reebtrace}
     \R \times \Lambda \:\: \cup \:\: \left\{(\tau,\phi^{\gamma(\tau)}_R(x)); \:\: x \in \Lambda_1^-\right\}
     \end{equation}
   for some smooth $\gamma(\tau) \ge 0$ (but not monotonously increasing) function that vanishes when $\tau \le A$, and which is constant when $\tau \ge B$. Recall that $\phi^t_R$ is the Reeb flow of $\alpha$.
\end{itemize}

\begin{figure}[htp]
\vspace{5mm}
\centering
\labellist
\pinlabel $Z$ at 0 64
\pinlabel $e^\tau$ at 11 164
\pinlabel $L_0=\R_\tau\times\Lambda_0$ at 60 25
\pinlabel $x$ at 190 7
\pinlabel $-1/3$ at 87 -2
\pinlabel $1/3$ at 107 -2
\pinlabel $\color{red}c_-$ at 99 23
\pinlabel $\color{red}c_+$ at 99 135
\pinlabel $\color{red}q$ at 101 64
\pinlabel $\color{blue}L_1$ at 60 143
\endlabellist
\includegraphics[scale=1.5]{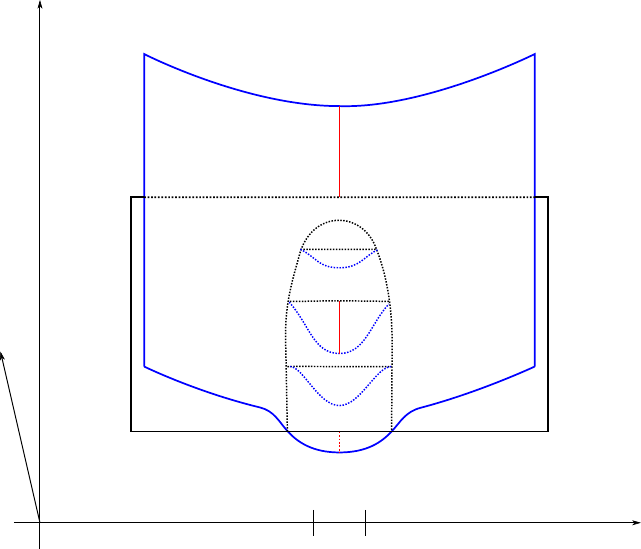}
\vspace{3mm}
\caption{The front projection of the Legendrian lift of the exact Lagrangian immersion $L=L_0  \cup L_1$ inside $\R_\tau \times U$ to the contactization $(\R_\tau \times U) \times \R_Z$ of the symplectization. The Reeb chord $q$ on the Legendrian lift corresponds to the unique double point $\{q\}=L_0  \cap L_1$. 
The cobordism is cylindrical with a vanishing primitive of $e^\tau\alpha$ outside of a compact subset if all sheets of the front are of the form $e^\tau f(x)$ outside of a compact subset. In order for \eqref{eq:reebtrace} to hold inside the subset $\{\|x\| \le 1/3\}$, it suffices to consider a front which  
is the graph of a function of the form $e^{\tau}(\tilde f(x)+g(e^\tau))$ 
above $B_{1/3}^n.$}
\label{fig:intersection}
\end{figure}

\begin{figure}[htp]
\vspace{5mm}
\centering
\labellist
\pinlabel $Z$ at 7 88
\pinlabel $\color{red}q$ at 89 35
\pinlabel $\color{red}c_+$ at 174 38
\pinlabel $e^\tau$ at 190 27
\pinlabel $\color{red}c_-$ at 32 38
\pinlabel $e^B$ at 144 15
\pinlabel $\color{blue}L_1$ at 144 63
\pinlabel $e^A$ at 66 38
\pinlabel $z$ at 221 86
\pinlabel $\color{red}q$ at 307 33
\pinlabel $\color{red}c_+$ at 390 53
\pinlabel $\tau$ at 400 40
\pinlabel $\color{red}c_-$ at 240 31
\pinlabel $B$ at 360 30
\pinlabel $u^+$ at 340 53
\pinlabel $u^-$ at 260 31
\pinlabel $A$ at 278 52
\pinlabel $\R_\tau\times\Lambda_0$ at 248 51
\pinlabel $\color{blue}L_1$ at 250 10
\endlabellist
\includegraphics[scale=1]{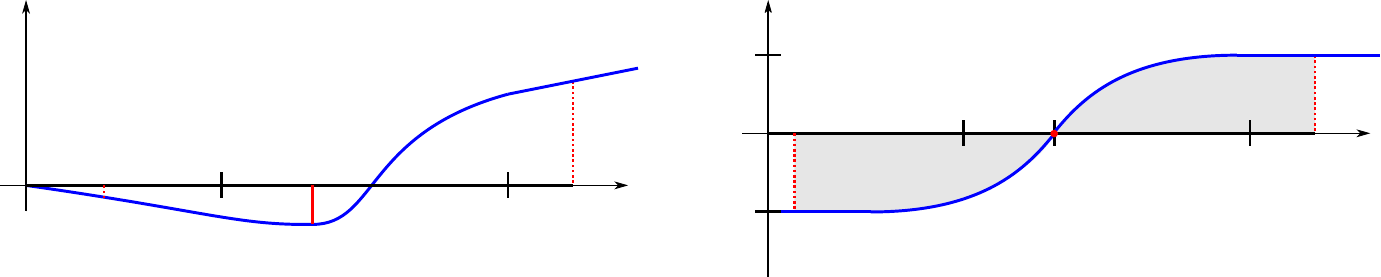}
\vspace{3mm}
\caption{Left: The front projection the Legendrian lift of the exact immersed Lagrangian cobordism $L=L_1 \cup (\R \times \Lambda_0)$ 
 to the contactization $(\R_\tau \times  B_{1/3} ) \times \R_Z$ of the symplectization. Right: the image of small disks $u^\pm$ with one puncture at the Reeb chord $c_\pm$ and a  positivepuncture at the double point $q$ which is contained inside a slice consisting of the image of the line $\R_\tau \times \{\mathbf{0}\}$ under the Reeb flow.}
\label{fig:disc}
\end{figure}

\begin{rem}
Equality \eqref{eq:reebtrace} holds because $\Lambda_1^t$  is assumed to induced by the Reeb flow inside the neighborhood $J^1B_{1/3}$ of the double point.There is one subtle point here: the function $\gamma(\tau)$ must be carefully chosen in order for $e^\tau\alpha$ to admit a compactly supported primitive when pulled-back to $L$,  as postulated by the fourth bullet point above. This condition on the primitive is easier to describe in the coordinates of the front projection: the front  should be the graph of a suitable function of the form $e^\tau(\tilde f(x)+g(e^\tau))$ above $B_{1/3}$, where $g(e^\tau)$ is constant for all $\pm \tau \gg 0$ sufficiently large.
  \end{rem}



\begin{lem}
\label{lem:SmallDisk}
For $[-,+]$ sufficiently small, the following holds.
For a suitable compatible almost complex structure on $\R \times Y$ which is cylindrical outside of a compact subset, there 
exists a \emph{unique} pseudo-holomorphic disk  $u^+$ (resp. $u^-$) with positive (resp. negative) asymptotic puncture at $c_+$ (resp. $c_-$), a  positive  puncture at $q,$ and no other punctures.
There exists a neighborhood of adjusted almost complex structures (see Section \ref{sec:Geometry}) in which a generic choice $J$  makes $u^\pm$ rigid and transversely cut out.
Furthermore,  there are no disks with a negative puncture at $q$, and with all other punctures negative Reeb chord asymptotics of which precisely one is mixed (and thus necessarily going from $\Lambda$ to $\Lambda_1^-$).

%
%

\end{lem}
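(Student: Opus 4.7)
My plan is to reduce the existence of $u^\pm$ to an explicit two-dimensional holomorphic model, deduce uniqueness by a positivity-of-intersection argument, and handle the non-existence claim by a direct area estimate. The delicate step will be showing that any competing disk lies in the two-dimensional slice that contains the explicit bigons; once that confinement is achieved, uniqueness reduces to the Riemann mapping theorem.

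\emph{Existence in a holomorphic slice.} Inside the Darboux chart $\R_\tau \times U \hookrightarrow \R \times J^1(B_1)$, I would consider the two-dimensional symplectic slice $\Pi := \R_\tau \times \{\mathbf{0}\}_x \times \{\mathbf{0}\}_y \times \R_z$. Using \eqref{eq:reebtrace} and that the Reeb flow of $\eta(dz - p_x\,dx)$ acts by $z$-translation, the intersection $L \cap \Pi$ consists of two smooth curves meeting transversely at $q$: the line $\R_\tau \times \{z=0\}$ coming from $L_0$, and a graph $\{(\tau, z(\tau))\}$ coming from $L_1$ that is asymptotic to $\{z=t^-\}$ as $\tau \to -\infty$ and to $\{z=t^+\}$ as $\tau \to +\infty$ (cf.~Figure \ref{fig:disc}). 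These curves bound one topological bigon in $\{\tau>0\}$ with corners at $q$ and $c_+$ and one in $\{\tau<0\}$ with corners at $q$ and $c_-$. I would then choose an adjusted $J$ preserving $\Pi$ and restricting there to a standard complex structure (compatible with the cylindrical Reeb-direction constraint at infinity, which is automatic on $\Pi$); the Riemann mapping theorem inside $\Pi$ produces $u^\pm \colon D \to \Pi$ with the prescribed data.

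\emph{Uniqueness and transversality.} For any other disk $v$ with the same asymptotics, Stokes applied to $e^\tau\alpha|_L$ bounds the Hofer energy by a quantity that tends to zero with the length of $[-,+]$ (and with $\eta$). Gromov--Hofer compactness then confines $v(D)$ to an arbitrarily small neighborhood of $\Pi$. Since $\Pi$ is $J$-holomorphic, positivity of intersection gives $v \cdot \Pi \ge 0$ with equality iff $v(D) \subset \Pi$; the constraints from the asymptotic ends (which already lie in $\Pi$) and the confinement of the boundary $v(\partial D) \subset L$ force the algebraic intersection number to vanish, so $v$ factors through $\Pi$, and uniqueness reduces to the uniqueness of a Riemann map of a simply connected domain. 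Transversality of $u^\pm$ then follows from a linearization computation in $\Pi$ (an embedded bigon in a two-dimensional symplectic manifold with transverse Lagrangian boundary arcs is automatically regular), and is preserved for generic adjusted $J$ by a standard Sard--Smale argument perturbing outside $\Pi$.

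\emph{Non-existence in the forbidden case.} Let $v$ be a nonconstant $J$-holomorphic disk on $L$ with a negative puncture at $q$, exactly one mixed Reeb chord asymptotic $c$, and any number of negative pure Reeb chord asymptotics $z_1, \ldots, z_k$. Boundary-orientation compatibility at $q$, combined with the sign convention of Remark \ref{rem:cNegative}, forces $c$ to run from $\Lambda$ to $\Lambda_1^-$, so the puncture at $c$ is also negative. Applying Stokes' theorem to a primitive of $e^\tau\alpha|_L$ that has a jump of magnitude $\Delta_q > 0$ at $q$ yields
\[
0 < \int_v d(e^\tau\alpha) = -\Delta_q - \ell(c) - \sum_{i=1}^k \ell(z_i),
\]
since every puncture contributes negatively. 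The right-hand side is strictly negative, contradicting the positivity of symplectic area.
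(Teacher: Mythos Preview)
Your existence argument and the non-existence claim via Stokes' theorem are essentially the same as the paper's, and the transversality of the explicit bigons is handled in the paper by citing an automatic-transversality lemma (\cite[Lemma 8.3(1)]{GDRLift}), in the same spirit as your remark.

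The genuine gap is in the uniqueness step. Positivity of intersection between two $J$-holomorphic surfaces is a dimension-four phenomenon: it requires the two objects to have complementary complex dimensions, here $1+1=n+1$, i.e.~$n=1$. For $n\ge 2$ a holomorphic disk and the two-dimensional slice $\Pi$ generically do not meet at all, and there is no intersection-theoretic mechanism forcing $v(D)\subset\Pi$. Even in the four-dimensional case your argument is incomplete: the intersection number $v\cdot\Pi$ is not obviously well-defined for a punctured disk against a non-compact surface with both objects sharing asymptotics and boundary on $L$, and you give no actual computation showing it vanishes. The sentence ``the constraints from the asymptotic ends\ldots\ force the algebraic intersection number to vanish'' is an assertion, not an argument.

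The paper replaces this step by a holomorphic \emph{projection} rather than an intersection. One chooses a cylindrical $J$ on $\R\times U$ that is the lift of an almost complex structure on $T^*B_1$ under the canonical projection $\pi\colon \R\times J^1B_1\to T^*B_1$, so that $\pi$ is $J$-holomorphic and collapses $\Pi$ to the origin. By the construction \eqref{eq:reebtrace}, $\pi(L)\cap T^*B_{1/3}$ consists of two transverse Lagrangian \emph{planes} through $0$. One then bounds the total energy $E_{[A,B]}(u')$ of any competing disk $u'$ by a quantity controlled by $\ell(c_\pm)$ (this is your Stokes step, made precise via \cite[Proposition 3.11]{GDRSurgery}), and uses two monotonicity inputs---the standard one from \cite[Prop.~4.3.1]{AudinLafontaine} to keep $u'$ away from $\{|z|=1\}$, and the Lagrangian version \cite[Prop.~4.7.2]{AudinLafontaine} applied to the projected curve $v=\pi\circ u'$---to show that for $\ell(c_\pm)$ small enough $u'$ stays in $B_{1/3}\times B_1\times[-1,1]$. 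Stokes' theorem on the pair of transverse Lagrangian planes then forces the symplectic area of $v$ to vanish, hence $v\equiv 0$ and $u'\subset\pi^{-1}(0)=\Pi$. This is the confinement mechanism you need; ``Gromov--Hofer compactness'' does not by itself give the uniform statement that \emph{every} small-energy disk is $C^0$-close to $\Pi$, since the cobordism $L$ itself varies with the interval $[-,+]$.
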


\begin{proof}
From the above assumptions (including Formula \eqref{eq:reebtrace}),  there exists a local projection
$$\pi: \R \times U \subset \R \times J^1(B_1) \rightarrow T^*(B_1), \quad \mbox{such that}
$$
$$
\pi(L_1 \cap (\R \times U)) \cap T^*B_{1/3}= \{(x,2x)\} \cap T^*B_{1/3},
\quad
\pi(\R \times (\Lambda \cap B_{1/3})) = \{(x,0)\}.
$$
Let $J_{T^*(B_1)}$ be any almost complex structure compatible with the standard symplectic structure $\omega_0$ on $T^*(B_1).$
This lifts to a unique cylindrical  almost complex structure $J_U$ on $\R \times U$  for which  $\pi$ is $(J_U, J_{T^*(B_1)})$-holomorphic.

If $J_{T^*(B_1)}$ is integrable in a neighborhood of the double point $\mathbf{0} \in T^*(B_1)$, then \cite[Lemma 8.3(1)]{GDRLift}
proves the transversality result for the strip $u_\pm \subset \pi^{-1}(\mathbf{0})$ whose image is the trace of the isotopy of the Reeb chord $c_t$ between $q$ and $c_\pm.$ This strip is depicted in Figure \ref{fig:disc}.

Extend $J_U$ to an adjusted almost complex structure $J$ for $\R \times Y.$
Since transversality is an open condition, we can perturb $J$ generically to assume all rigid holomorphic disks are transversely cut out.

We will show uniqueness of $u = u^+$  first using a monotonicity result for Lagrangian surgery cobordisms \cite[Lemma 5.1]{GDRSurgery} (the current setup is similar), and then using a monotonicity result for compact Lagrangians \cite[Proposition 4.7.2]{AudinLafontaine}. 
The $u^-$ case is similar.

Following the \cite{GDRSurgery} notation, let $- =A$ and $+ = B$ so that $L$ is cylindrical outside of  $\{A \le \tau \le B\} \subset \R_\tau \times Y.$
\cite[Section 3.3.1]{GDRSurgery} defines the {\bf{total energy}}
$$E_{[A,B]}(u)=e^{-A}\int_u d(e^{\varphi(\tau)} \alpha) + \sup_{\rho(\tau)}\int_u \rho(\tau)d\tau\wedge \alpha. $$ 
 This total energy is the sum of the $d(e^{\varphi(\tau)} \alpha)$-energy of $u,$ where
$$\varphi(\tau)=\begin{cases}
  A, & \tau \le A,\\
  \tau, & \tau \in [A,B],\\
  B, & \tau \ge B,
\end{cases}$$
and the $(d\tau \wedge \alpha)$-energy of $u \cap \{\tau \notin [A,B]\},$ and where the $\rho(\tau)$ are non-negative bump functions that have compact support  contained in precisely one of the subsets
\begin{itemize}
\item $(-\infty,A]$, in which case $\int_\R \rho(\tau)dt=1$, or in
\item $[B,+\infty)$, in which case $\int_\R \rho(\tau)dt=e^{B-A}.$
\end{itemize}

Let $h(q_+)>h(q_-)=0$ be the the primitives of $e^\tau\alpha|_{TL}$ at the two sheets of the double point of $L$, whose size is controlled by the size of $\ell(c_\pm)$ (see the difference of $Z$--coordinates in Figure \ref{fig:intersection}).
Applying \cite[Proposition 3.11 (2) and (3)]{GDRSurgery} to the first and second term of the total energy, we get 
    \begin{align*}
      & e^{-A}\int_u d(e^{\varphi(\tau)} \alpha) \le e^{B-A}\ell(c_+) + e^{-A}(h(q_+)-h(q_-)),\\
      & \sup_{\rho(\tau)}\int_u \rho(\tau)d\tau\wedge \alpha \le e^{B-A}\ell(c_+)
    \end{align*}
    and thus
    $$  E_{[A,B]}(u') \le e^{B-A}\ell(c_+) + e^{-A}(h(q_+)-h(q_-))+e^{B-A}\ell(c_+),$$
    for any $J$-holomorphic strip $u'$ with its unique positive Reeb chord puncture at $c_+,$ one positive puncture at $q$, and possibly additional negative Reeb chord punctures.  For the computation of the above bound on the energy, we have used that the primitive of $e^\tau\alpha$ pulled back to $L$ can be taken to have compact support.

    In view of the above energy bound,and dependence of $(h(q_+)-h(q_-))$ on $\ell(c_\pm)$, we get the following crucial property: total energy of $u'$ can be assumed to be \emph{arbitrarily small}, after shrinking the interval of the isotopy used in the construction of the cobordism (in order for $\ell(c_\pm)$ to become arbitrarily small).


Following the proof of \cite[Lemma 5.1]{GDRSurgery} based upon the standard monotonicity property
 for pseudo-holomorphic curves in symplectic manifolds (see \cite[Proposition 4.3.1]{AudinLafontaine}), there is a constant $E_0$ such that if $u'$ intersects the subset $\{z=\pm 1\} \subset J^1B_1 $ (which is disjoint from $L$), then
$$E_{[A,B]}(u') \ge E_0.$$
Under the assumption that the cobordism has been constructed so that $\ell(c_\pm)>0$ is sufficiently small (while keeping the above coordinates, almost complex structure, and projection of $L$ to $T^*B_{1/3}$ fixed), we can hence conclude that $u'$ is disjoint from some fixed neighborhood of $\{|z|=1\}.$ Since
$$\partial U=\{ |z|=1 \} \: \cup \: \partial(B_{1} \times B_1 )\times(-1,1),$$
this means that the projected curve $v=\pi \circ u'|_{u'^{-1}(U)}$ has boundary $v|_{u'^{-1}(\partial U)}$ contained inside $\partial(B_{1} \times B_1 ) \subset T^*B_1 .$

In view of the above bound on the $|z|$--coordinate of $u'$, we conclude the following. If $v$ intersects the boundary $\partial(B_{1/3}  \times B_1 )$, then its symplectic area can be bounded from below by Sikorav's original monotonicity result \cite[Proposition 4.7.2]{AudinLafontaine}. More precisely, we apply this monotonicity to $J_{T^*B_1 }$-holomorphic curves inside in $(B_{1/3}  \times B_1 ,\omega_0)$ with boundary on the transversally intersecting Lagrangian planes
$$\pi(L \cap (B _{1/3} \times B _1 \times [-1,1])) \subset B _{1/3} \times B _1$$
(the Lagrangian planar property of the projection of $L$ is a consequence of Formula \eqref{eq:reebtrace}). We conclude that any such $v$ has $\omega_0$-area bounded from below by some constant $C> 0$.

The holomorphicity of the projection $\pi$, together with the fact that the two-forms $d(e^{\varphi(\tau)}\alpha)$ and $\rho(\tau)d\tau \wedge \alpha$ pull back to non-negative two-forms on any pseudo-holomorphic curve for a cylindrical almost complex structure (recall that $\varphi'(\tau),\rho(\tau)\ge 0$), implies the inequalities
$$ 0 \le \int_v e^Ad\alpha \le E_{[A,B]}(u).$$
In particular,
 we conclude that $E_{[A,B]}(u)$ has a lower bound in terms of the symplectic area of $v$ which, in turn, is bounded from below by $C>0$ in view of the aforementioned monotonicity argument. For $\ell(c_\pm)>0$ sufficiently small, the upper bound on the left-hand side implies that the image of $u$ must be contained entirely inside $B _{1/3} \times B _1 \times [-1,1]$.

Since the projected boundary condition $\pi(L \cap (B _{1/3} \times B _1 \times [-1,1]))$ consists of two transversely intersecting Lagrangian planes, Stokes' theorem can be applied to show that $v$ must have vanishing $\omega_0$-area. Hence the projection $v$ is constantly equal to $0 \in B_{1/3}  \times B_1 $. This means that $u'$ is contained inside the Reeb orbit that projects to the origin in the above coordinates. The sought claim concerning the uniqueness of $u^\pm$ follows from this.

The claim about the non-existence of discs with only negative asymptotics is a consequence of Stokes' Theorem.\end{proof}

For a pair of mixed chords $x,y$ and ordered (possibly empty) sets of pure chords $\mathbf{z},\mathbf{z}'$ define
$$\mathbf{m}_L(x^+, y^\pm) = \sum_{\mathbf{z}, \mathbf{z}'} \sharp \mathcal{M}^0(x^+\mathbf{z} y^\pm \mathbf{z}';L) \varepsilon_{-}(\mathbf{z} \mathbf{z}').$$
Lemma \ref{lem:SmallDisk} implies (up to a $q \mapsto \pm q$ basis change)
\begin{equation}
\label{eq:mcp}
\mathbf{m}_L((c_+)^+, q^+) = 1 = \mathbf{m}_L(q^+, (c_-)^-).
\end{equation}
Since a positive (resp. negative) puncture at $q$ means that the boundary of the disk makes a jump from (resp. to) the component $\R \times \Lambda$ at the puncture, we immediately get the vanishing result
$$\mathbf{m}_L(q^-, (c_-)^-) = 0 = \mathbf{m}_L((c_+)^+,q^-).$$

Choose $\delta$ such that
$$\max_{x_{10} \ne c_-} \ell(x_{10}) <  \delta < \ell(c_-) < 0 < \min_{x_{01} } \ell(x_{01}).$$
Let $RFC^{[\delta, b)}_*(\pm)$ and $RFC^{[a, \delta)}_*(\pm)$ be the two Rabinowitz Floer complexes for $\bar{\Lambda}_\pm $ with maps $d_{01}^\pm, d_{10}^\pm, B^\pm$ from Definition \ref{defn:MCC}.

Define the linear maps 
$${\phi}_{01}: RFC^{[\delta, b)}_*(+) \rightarrow RFC^{[\delta, b)}_*(-)$$ and
$${\phi}_{10}: RFC^{[a, \delta)}_*(-) \rightarrow RFC^{[a, \delta)}_*(+)$$
via the generators as follows.

\begin{eqnarray}
\label{eq:phi}
\phi_{01}(x_{01}) & = & \sum_{y_{01}} \mathbf{m}_L(x_{01}^+, y_{01}^-)y_{01}+ \mathbf{m}_L(x_{01}^+, q^{+}) c_{-} \\
\notag
\phi_{10}(x_{10})& = & \sum_{y_{10}} \mathbf{m}_L(x_{10}^-, y_{10}^+)y_{10}. 
\end{eqnarray}
Note that $\phi_{10}(c_-) =c_-$ and $c_-$ is not in the domain of either map.

Fix a small $\epsilon >0.$
By Lemma \ref{lem:ActionBound}, we can assume that the time-interval of the isotopy, $[-,+],$
is small enough such that the following holds.
For any pair of distinct chords $\{x,y\} \ne \{c_-, c_+\}$ and any chord $z \notin \{c_-, c_+\}$
$$   |\ell(x^-) - \ell(y^+)| > e^\epsilon \epsilon, \quad |\ell(z^-) - \ell(z^+)| < e^{-\epsilon} \epsilon, 
\quad |\ell(c_-) -\ell(c_+)| < e^{-\epsilon} \epsilon.$$

\begin{lem}
\label{lem:HomotopyInverse}
For $\epsilon$ sufficiently small and for all  $x_{01}, x_{10} \notin \{c_-, c_+\},$ 
 $\langle \phi_{01} x_{01}, x_{01} \rangle = k_{01}$ and  $\langle \phi_{10} x_{10}, x_{10} \rangle = k_{10}$ are units.
\end{lem}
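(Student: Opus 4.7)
The proof is the linearized, Rabinowitz--Floer analogue of Case (1) in the proof of Proposition \ref{prop:Bifurcation}. Running the reversed Legendrian isotopy produces an inverse trace cobordism $L'$ from $\bar{\Lambda}^+$ to $\bar{\Lambda}^-$. By Lemma \ref{lem:ActionBound}(1), we may assume that both $L$ and $L'$ have length at most some $\delta > 0$, arbitrarily small by shrinking $[-,+]$, and that $L \circ L'$ is compactly supported Hamiltonian isotopic to the trivial cylinder through cobordisms of length at most $2\delta$. Repeating the construction of (\ref{eq:phi}) with $L'$ in place of $L$, and using the compatibility of augmentations $\varepsilon_\pm$ under the induced DGA-morphisms (Lemma \ref{lem:PullBackAug}), produces linear maps $\psi_{01}\colon RFC_*^{[\delta,b)}(-) \to RFC_*^{[\delta,b)}(+)$ and $\psi_{10}\colon RFC_*^{[a,\delta)}(+) \to RFC_*^{[a,\delta)}(-)$. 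Applying Ekholm's DGA-homotopy (\ref{eq:homotopy1}) to $L\circ L'$ versus the trivial cylinder, linearizing via $\Psi_\varepsilon$ (Section \ref{sec:STI}) and restricting to the $x_{01}$-block, yields a chain homotopy $g_{01}$ satisfying $\id = \psi_{01}\phi_{01} - (d_{01}^+ g_{01} + g_{01} d_{01}^+)$ on $RFC_*^{[\delta,b)}(+)$; an analogous identity holds on the $x_{10}$-block. By Lemma \ref{lem:ActionBound}(2), all of $\phi_{01}, \psi_{01}, g_{01}$ (and their $x_{10}$ analogues) shift action lengths by at most a factor of $e^{2\delta}$, which may be taken arbitrarily close to $1$.

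The key step is then extracting the $x_{01}^+$-coefficient from the above identity applied to $x_{01}^+$ (with $x_{01} \notin \{c_-, c_+\}$). Writing $\phi_{01}(x_{01}^+) = \sum_y a_y\, y^- + m\, c_-$, a summand $y^-$ contributes to $\langle \psi_{01}\phi_{01}(x_{01}^+), x_{01}^+\rangle$ only if $a_y \langle \psi_{01}(y^-), x_{01}^+\rangle \ne 0$, which by the action bound forces $\ell(y^-) \in [e^{-2\delta}\ell(x_{01}^+), e^{2\delta}\ell(x_{01}^+)]$. Choosing $\delta$ small enough that $(e^{2\delta}-1) b < e^\epsilon \epsilon /2$ (recall $\ell(x_{01}^+) < b$) and that $e^{2\delta}|\mathfrak{a}(c_-)| < \min_{x_{01}} \ell(x_{01})$, the action-gap hypothesis $|\ell(x^-) - \ell(y^+)| > e^\epsilon \epsilon$ for distinct chords (combined with $|\ell(z^-)-\ell(z^+)|< e^{-\epsilon}\epsilon$ for the same chord) forces $y = x_{01}$, while the $c_-$-contribution is ruled out by the second smallness condition on $\delta$.

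A parallel action-bookkeeping argument shows that the $x_{01}^+$-coefficient of $(d_{01}^+ g_{01} + g_{01} d_{01}^+)(x_{01}^+)$ vanishes. Indeed, any intermediate generator $w$ contributing to this diagonal would require $\ell(w) \in (\ell(x_{01}^+), e^{2\delta}\ell(x_{01}^+)]$ (for the composition $d_{01}^+ g_{01}$, since $d_{01}^+$ is strictly action-decreasing) or $\ell(w) \in (e^{-2\delta}\ell(x_{01}^+), \ell(x_{01}^+))$ (for $g_{01} d_{01}^+$); in either case $w \ne x_{01}$, but then the gap hypothesis excludes such a $w$ for $\delta$ sufficiently small. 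Consequently $k_{01}\cdot\langle \psi_{01}(x_{01}^-), x_{01}^+\rangle = 1$, so $k_{01}$ is a unit. The proof that $k_{10}$ is a unit is entirely analogous, using the corresponding identity on the $x_{10}$-block. The main subtlety lies precisely in verifying the vanishing of this homotopy contribution on the diagonal, which requires carefully balancing the multiplicative action shift $e^{2\delta}$ of $g_{01}$ against the additive action gap $e^\epsilon\epsilon$ coming from the hypothesis.
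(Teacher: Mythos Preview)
Your overall strategy---construct an inverse trace cobordism, invoke a homotopy relation between the concatenation and the trivial cylinder, then extract the diagonal coefficient by action bookkeeping---is the same as the paper's, and your action argument in the last two paragraphs is essentially Case~(1) of the proof of Proposition~\ref{prop:Bifurcation}. However, there is a real gap at the point where you write ``Applying Ekholm's DGA-homotopy (\ref{eq:homotopy1}) to $L\circ L'$ versus the trivial cylinder.''

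The problem is that $L$ is \emph{immersed}: it has the transverse double point $q = L_0 \cap L_1$, and so do the inverse cobordism $L'$, the concatenation, and the interpolating family $L^s$ between the concatenation and the trivial cylinder. Ekholm's DGA-homotopy in \cite{Ekholm} is set up for embedded exact Lagrangian cobordisms. For an immersed family, the compactification of the one-dimensional moduli spaces that would produce the relation $\id = \psi_{01}\phi_{01} - (d_{01}^+ g_{01} + g_{01} d_{01}^+)$ acquires additional boundary strata: broken configurations in which some component has a puncture at a double point of $L^s$. These contribute correction terms, and until they are shown to vanish you cannot assert the linearized homotopy identity. For the same reason, your appeal to Lemma~\ref{lem:ActionBound}(2) for the action shift of $g_{01}$ is not yet justified, since that lemma is stated in the embedded setting.

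The paper's proof addresses precisely this point. It observes that the double points in the family $L^s$ can all be arranged to have arbitrarily small action, and then a Stokes/energy argument (parallel to the last statement of Lemma~\ref{lem:SmallDisk}) shows that the relevant moduli spaces $\mathcal{M}^d(\Gamma,L^s)$ with a double-point puncture, no positive Reeb chord, and no negative puncture at $c_+$, are empty. Only after this does the homotopy relation (\ref{eq:homotopy}) hold for the maps defined by disks \emph{without} double-point punctures, and one concludes as in Case~(1). Your argument becomes correct once this step is inserted; as written it skips the one genuinely new ingredient that distinguishes the present immersed situation from Section~\ref{sec:Geometry}.
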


\begin{proof}
The exact Lagrangian concordance $L_1$ shown in Figure \ref{fig:intersection} has an inverse cobordism $L_2$ constructed in the same manner, so that the concatenation $L_2 \odot L_1$ is (compactly supported) Hamiltonian isotopic to $\R \times \Lambda^+$. (One can either construct it by the front projection, or use the fact that the non-cylindrical component of $L_1$ can be realized as a Lagrangian trace cobordism as constructed by Proposition \ref{prop:trace}.)  Let $L^{0} = \R \times \bar{\Lambda}^+$ and let $L^{1}=L_2 \odot L_1$, such that $L^{1} \cap \{|\tau|> \tau_0\}$ agrees with $L^{0}.$
  
For $0 \le s \le 1,$ let $L^{s}$ be a  smooth family of exact Lagrangian concordances with only double points of arbitrarily small action,  such that $L^{s} \cap \{|\tau|> \tau_0\}$ agrees with $L^{0}.$ As in the last statement of  Lemma \ref{lem:SmallDisk} (proven by an application of Stokes' Theorem and the positivity of the total energy) we conclude that $\mathcal{M}^d(\Gamma, L^s) = \emptyset$ whenever $\Gamma$ has  a double point of $L^{s}$ (thus, of very small action), no positive Reeb chords, and no negative Reeb  at $c_+$.

 $\mathcal{M}^d(\Gamma, L^s) = \emptyset$ implies that if we ignore disks with punctures at double points of the concordance,  the trace concordance and inverse-trace concordance induce maps $\Phi, \Psi$ which satisfy (\ref{eq:homotopy}).
 In particular, if we set $x = x_{01}$ (resp. $x_{10}$) then we get $k_\Phi$ is a unit, arguing as in Case (1) in the proof of Proposition \ref{prop:Bifurcation} in Section \ref{sec:Geometry}.
 But $k_\Phi$ is also a count of the disks contributing to $\langle \phi_{01} x_{01}, x_{01} \rangle$
 (resp. $\langle \phi_{10} x_{10}, x_{10} \rangle$).

\end{proof}

\begin{figure}[htp]
\vspace{5mm}
\centering
\labellist
\pinlabel $\color{blue}L_1$ at 0 123
\pinlabel $\color{blue}L_1$ at 112 123
\pinlabel $\color{blue}L_1$ at 190 123
\pinlabel $\color{blue}\R \times \Lambda_1^+$ at 186 185
\pinlabel $\R \times \Lambda$ at 285 123
\pinlabel $\R \times \Lambda$ at 65 123
\pinlabel $\color{blue}\R \times \Lambda_1^-$ at 105 57
\pinlabel $\R \times \Lambda$ at 175 57
\endlabellist
\includegraphics[scale=1]{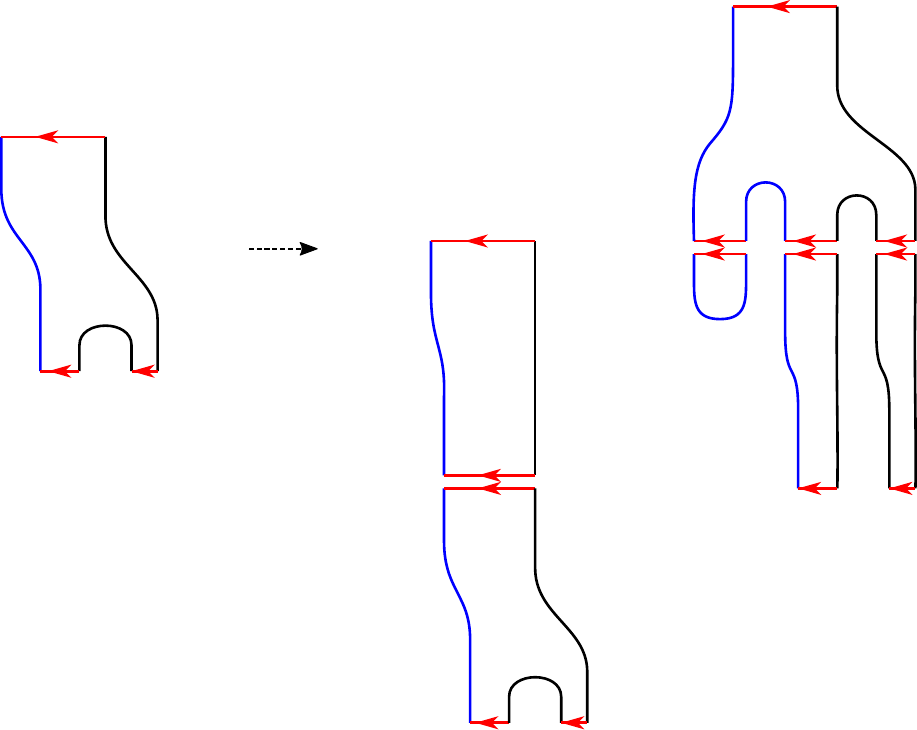}
\caption{Further examples of boundary points of type $(5 \times 2_\pm)$ for the compactification of the one-dimensional moduli space of pseudo-holomorphic disks with precisely one positive and precisely one negative mixed Reeb chord asymptotic, and boundary on $L_1 \cup (\R \times \Lambda).$ See Figure \ref{fig:breaking5}. The components in the middle level that have only pure Reeb chord asymptotics are responsible for pulling back the augmentations under the DGA-morphism induced by the concordance.}
\label{fig:breaking5.2}
\vspace{3mm}
\end{figure}

\begin{prop}
\label{prop:AxiomA1}
The maps $\phi_{01}, \phi_{10}$  satisfy Axiom \ref{A1} with  degree $\epsilon.$
\end{prop}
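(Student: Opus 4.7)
The plan is to verify three things required by Axiom \ref{A1}: that $\phi_{01}$ and $\phi_{10}$ are chain maps, that they are of degree $\epsilon$, and that homotopy inverses with chain homotopies of degree $\epsilon$ exist.

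First, to verify the chain-map property, I will analyze the boundary strata of the one-dimensional moduli spaces of type $(5)$ and adjacent types listed in Proposition \ref{prop:MasterEQ}. The strata $(2_+ \times 5)$, $(5 \times 2_-)$, and $(5 \times 3_-)$ contribute the expected compositions of $\phi_{01}, \phi_{10}$ with the linearized (co)differentials, while the stratum $(6 \times 7)$ produces contributions from the unique small disks $u^\pm$ of Lemma \ref{lem:SmallDisk}. The key computation \eqref{eq:mcp}, together with the pullback compatibility of augmentations in Lemma \ref{lem:PullBackAug}, will make these boundary contributions close up to yield the chain-map identities. In particular, the $c_-$-term in the formula for $\phi_{01}$ is precisely what pairs with the $B^\pm$-components of the mapping-cone differentials to close the equation.

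Second, the degree-$\epsilon$ bound is a direct application of Lemma \ref{lem:ActionBound}(2): since the Legendrian isotopy over $[-,+]$ may be assumed to have oscillation at most $\epsilon$, the trace cobordism $L$ can be arranged to have length at most $\epsilon$, so any chord appearing in $\phi_{01}(x_{01})$ or $\phi_{10}(x_{10})$ has length at most $e^{2\epsilon}$ times the input length; after shrinking $[-,+]$ this yields the required degree-$\epsilon$ property. The same estimate covers the $c_-$-term in $\phi_{01}$ because $|\mathfrak{a}(c_-)-\mathfrak{a}(c_+)|$ is arbitrarily small for short isotopies.

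Finally, the homotopy inverses $\psi_{01}$ and $\psi_{10}$ will be constructed from the reverse-time Lagrangian trace cobordism, and will enjoy the same chain-map and degree properties by the same arguments. The chain homotopies $h_{ij}, k_{ij}$ will be produced following the scheme of Equation \eqref{eq:homotopy1}: I will interpolate between the concatenation of the trace and its inverse and the trivial cylinder through a generic one-parameter family of Lagrangian cobordisms of length at most $2\epsilon$ (by Lemma \ref{lem:ActionBound}(1)), and count index $-1$ punctured pseudoholomorphic curves appearing at isolated parameter values. The fact that the diagonal coefficients are units, established in Lemma \ref{lem:HomotopyInverse}, will ensure that the compositions $\psi_{ij}\phi_{ij}$ and $\phi_{ij}\psi_{ij}$ become the identity on generators modulo the chain-homotopy terms, as required. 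The main obstacle will be the careful treatment of moduli spaces with punctures at the double points that appear, disappear, or change in the interpolating family; the non-existence of negative-only curves asymptotic to the double point (the last statement of Lemma \ref{lem:SmallDisk}), combined with the monotonicity analysis used in that lemma, is precisely what makes the off-diagonal $B$-components of the cone differential fit properly into the chain-homotopy identity.
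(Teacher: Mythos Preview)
Your chain-map analysis and degree bound are broadly on the right track, but there are two points where your plan diverges from the paper, one minor and one significant.

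\textbf{Minor: the $c_-$ component of the chain-map identity.} For $\langle(\phi_{01}d_{01}^+-d_{01}^-\phi_{01})x_{01},c_-\rangle$ the paper does \emph{not} use the boundary of the type $(5)$ moduli space; it uses the boundary of the type $(6)$ moduli space, whose strata are $(2_+\times 6)$, $(6\times 3_-)$, and $(5\times 7\times 1_-)$ (see Figure \ref{fig:breaking6} and Proposition \ref{prop:MasterEQ}(6)). The stratum $(6\times 7)$ you cite lies in the boundary of $(5)$ and is relevant only for the outputs $z_{01}\neq c_-$, where it in fact vanishes by the last clause of Lemma \ref{lem:SmallDisk}. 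Your description of the role of the $c_-$-term in $\phi_{01}$ is morally right (the relevant differential on $RFC^{[\delta,b)}_*(-)$ does have a banana component hitting $c_-$), but the moduli-theoretic bookkeeping should be corrected.

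\textbf{Significant: the inverses.} You propose to build $\psi_{01},\psi_{10}$ from the reverse trace cobordism and then produce homotopies $h_{ij},k_{ij}$ by counting index $-1$ disks in a one-parameter family of immersed cobordisms. The paper does something much simpler: because $\phi_{01}$ and $\phi_{10}$ are upper-triangular in the action-ordered basis with units on the diagonal (this is exactly Lemma \ref{lem:HomotopyInverse}), they admit \emph{strict} inverses, written down explicitly by inverting the triangular system. The homotopies $h_{ij},k_{ij}$ can therefore be taken to be zero, and Axiom \ref{A1} is satisfied trivially on the homotopy side. Your geometric route is not incorrect in principle, but it forces you into precisely the delicate analysis you flag as the ``main obstacle'': the interpolating family $L^s$ between $L_2\odot L_1$ and the trivial cylinder carries double points, and you would need to control disks with punctures at these double points uniformly in $s$. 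The paper invokes such a family only inside the proof of Lemma \ref{lem:HomotopyInverse}, and there only to extract the unit-diagonal property via a vanishing argument (disks with a double-point puncture and no positive Reeb chord do not exist by Stokes). Once that lemma is in hand, the algebraic inversion sidesteps the obstacle entirely. I recommend adopting the algebraic construction of the inverses; your outline as written does not convincingly handle the parameterized double-point analysis.
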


\begin{proof}
We first verify $\phi_{01} d_{01}^+ =  d_{01}^- \phi_{01}.$ 
\begin{eqnarray*}
\lefteqn{\langle (\phi_{01} d_{01}^+ -  d_{01}^- \phi_{01}) x_{01}, c_- \rangle = }\\
&=&\sum_{y_{01}}  \langle d_{01}^+ x_{01}, y_{01} \rangle\langle \phi_{01} y_{01}, c_- \rangle    -
 \langle \phi_{01} x_{01}, y_{01} \rangle \langle d_{01}^- y_{01}, c_- \rangle \\
&=&
 \sum_{y_{01}}  \mathbf{m}_{\bar{\Lambda}_+} (x_{01}^+, y_{01}^-)  \mathbf{m}_L(y_{01}^+, q^{+}) -   \mathbf{m}_L(x_{01}^+, y_{01}^- ) \mathbf{m}_L(q^{+}, (c_-)^- ) \mathbf{m}_{\bar{\Lambda}^-} (y_{01}^+, c_-^+) 
\end{eqnarray*}
The term $\mathbf{m}_L(q^{+}, (c_-)^- )=1$ is added to illustrate how the first and second sums are of type $(2_+ \times 6)$ and  $(2 \times 7 \times 1_-),$ respectively. 

The breaking of curves that involve only pure chords can be disregarded when the strips are counted with augmentations; see the analysis of the so-called ``$\delta$-breakings'' in \cite{Dimitroglou:Cthulhu}. This is due to two different mechanisms. First, a broken configuration such as $(5 \times 3_-)$ shown in Figure \ref{fig:breaking5}, i.e.~with a disk with only pure punctures in the bottom level (these discs define the differential of the Chekanov--Eliashberg algebra of $\overline{\Lambda}^-)$, are canceled algebraically when the count is weighted by the value of the augmentation. (Recall that augmentations vanish on boundaries of the Chekanov--Eliashberg differential by definition). Second, a disk with only pure punctures in the middle level, shown e.g.~in Figure \ref{fig:breaking5.2}, plays the role of pulling back the augmentation $\varepsilon_-$ under the DGA-morphism induced by the concordance $L$. In view of Lemma \ref{lem:PullBackAug} this pull-back is equal to $\varepsilon_+$, as sought.
For more details, see \cite[p.416, I and II]{Dimitroglou:Cthulhu}.

Since $(6 \times 3_-)$ corresponds to such an augmentation-related breaking,  Proposition \ref{prop:MasterEQ}(6) then implies the right hand side is 0.  The relevant boundary of the moduli space is shown in Figure \ref{fig:breaking6}.

For $z_{01} \ne c_-,$
\begin{eqnarray*}
  \lefteqn{\langle (\phi_{01} d_{01}^+ -  d_{01}^- \phi_{01}) x_{01}, z_{01} \rangle  =}\\
  & = & \sum_{y_{01}}  \mathbf{m}_{\bar{\Lambda}^+} (x_{01}^+, y_{01}^-)  \mathbf{m}_L(y_{01}^+, z_{01}^-)- \mathbf{m}_L (x_{01}^+, y_{01}^-)  \mathbf{m}_{\bar{\Lambda}^-}(y_{01}^+, z_{01}^-)
        \end{eqnarray*}
        which are the $(2_+ \times 5)$ and $(5 \times 2_-)$ terms in Proposition \ref{prop:MasterEQ}(5). See Figures \ref{fig:breaking5} and \ref{fig:breaking5.2} for illustrations.  There is no $(6 \times 7)$ term  since the component with a negative mixed Reeb chord asymptotic to $z_{01}^-$ must have a negative puncture at $q$, while $\mathbf{m}_L(q^{-}, z_{01}^-) = 0$ holds by Lemma \ref{lem:SmallDisk}. The   $(5 \times 3_-)$ terms are the augmented terms. Finally, 
$$
(\phi_{01} d_{01}^+ -  d_{01}^- \phi_{01})c_+ = \phi_{01}(0) - d_{01}^-c_- = 0 - 0.
$$

Verifying $\phi_{10} d_{10}^- =  d_{10}^+ \phi_{10}$ has only one computation:
\begin{eqnarray*}
  \lefteqn{\langle (\phi_{10} d_{10}^- -  d_{10}^+ \phi_{10}) x_{10}, z_{10} \rangle  = }\\
  & = &  \sum_{y_{10}} \mathbf{m}_L (y_{10}^+,x_{10}^-)  \mathbf{m}_{\bar{\Lambda}^-}(z_{10}^+,y_{10}^-) -  \mathbf{m}_{\bar{\Lambda}^+} (y_{10}^+,x_{10}^-)  \mathbf{m}_L(z_{10}^+,y_{10}^-)\end{eqnarray*}
which are all the terms in Proposition \ref{prop:MasterEQ}(5). Again the boundary is depicted in Figure \ref{fig:breaking5}, and we use Lemma \ref{lem:SmallDisk} to conclude that $\mathbf{m}_L(q^+,x_{10}^-)=0$ since $x_{10}^- \neq c_-$ holds by assumption.

For $x_{01} \ne c_+,$ Lemma \ref{lem:HomotopyInverse} implies $\langle \phi_{01} x_{01}, x_{01} \rangle  = k_{01}$ and $\langle \phi_{10} x_{10}, x_{10} \rangle = k_{10}$ are units. 
So to construct (strict, not just homotopy) inverses $\psi_{01}, \psi_{10}$ for $\phi_{01}, \phi_{10},$ it suffices to set
\begin{eqnarray*}
\psi_{01}(x_{01}) &= & k_{01}^{-1} x_{01} - \sum_{y_{01} \ne x_{01}} \langle \phi_{01}(x_{01}), y_{01} \rangle y_{01} -  \langle \phi_{01}(x_{01}), c_- \rangle c_+, \\
 \psi_{01}(c_-) & = & c_+,\\
\psi_{10}(x_{10}) & = & k_{10}^{-1} x_{10} - \sum_{y_{10} \ne x_{10}} \langle \phi_{10}(x_{10}), y_{10} \rangle y_{10}.
\end{eqnarray*}
It is easy to check that these are chain maps.

Stokes' theorem finally bounds all the maps' degrees by $\epsilon>0.$

%
\end{proof}

\begin{prop}
\label{prop:AxiomA2}
The maps $B^-$ and $B^+$ are chain maps which satisfy Axiom \ref{A2}. That is, $\phi_{10} B^+ \phi_{01}$ is homotopic to $B^-,$ where $B^\pm$ are of degree $\epsilon.$   
\end{prop}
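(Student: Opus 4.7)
I would prove the proposition in two steps paralleling Proposition~\ref{prop:AxiomA1}. First, that $B^\pm$ is a chain map follows from the compactification of $1$-dimensional moduli of type $(1_\pm)$: by Proposition~\ref{prop:MasterEQ}$(1_\pm)$, the boundary is the disjoint union of $(2_\pm \times 1_\pm)$ and $(1_\pm \times 3_\pm)$ strata. The $(2_\pm \times 1_\pm)$ strata contribute $d_{10}^\pm B^\pm + B^\pm d_{01}^\pm$ (a $(2_\pm)$-disk splits off either of the two positive mixed-chord asymptotics of the $(1_\pm)$-disk), while the $(1_\pm \times 3_\pm)$ strata are augmentation-weighted sums that vanish by $\varepsilon_\pm \circ \partial_{\mathcal{A}(\Lambda_i^\pm)} = 0$. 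This yields the chain map relation.

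For the chain homotopy realising Axiom~\ref{A2}, I interpret the maps appearing in the statement as
\[ \phi_{01} \colon RFC^{[\delta,b)}(-) \to RFC^{[\delta,b)}(+), \qquad \phi_{10} \colon RFC^{[a,\delta)}(+) \to RFC^{[a,\delta)}(-), \]
induced by the \emph{reverse} trace cobordism $\widetilde L$ obtained by applying the construction of Section~\ref{sec:onedoublepoint} to the reversed Legendrian isotopy $\{\Lambda_1^{-t}\}$. By Lemma~\ref{lem:ActionBound}(1), such $\widetilde L$ exists of length less than $\epsilon$ and has a single transverse double point $\widetilde q$; after adjusting signs at $\widetilde q$ as in Remark~\ref{rem:cNegative} applied to the reversed isotopy, the statements of Lemma~\ref{lem:SmallDisk}, Lemma~\ref{lem:HomotopyInverse}, and Proposition~\ref{prop:AxiomA1} all apply verbatim to $\widetilde L$, furnishing the required $\phi_{01}, \phi_{10}$ in the directions above (note the flip of source and target relative to the $L$-induced maps of~\eqref{eq:phi}). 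This choice of $\phi$'s is what makes the composition $\phi_{10} B^+ \phi_{01}$ type-matched as a map $RFC^{[\delta,b)}(-) \to RFC^{[a,\delta)}(-)$.

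I then define the homotopy operator $H \colon RFC^{[\delta,b)}(-) \to RFC^{[a,\delta)}(-)$ by
\[ H(x_{01}) \coloneqq \sum_{y_{10}} \mathbf{m}_{\widetilde L}(x_{01}^+, y_{10}^+)\, y_{10}, \]
and analyse the boundary of the $1$-dimensional moduli of type $(4)$ on $\widetilde L$ via the $\widetilde L$-analogue of Proposition~\ref{prop:MasterEQ}$(4)$. Re-indexing the $\pm$ labels to match the flipped cobordism orientation (the positive end of $\widetilde L$ is $\bar\Lambda^-$ and the negative end is $\bar\Lambda^+$), the five strata contribute as follows: the $(2_+\times 4)$-analogue yields $d_{10}^- H + H d_{01}^-$; the $(5\times 5\times 1_-)$-analogue yields $\phi_{10} B^+ \phi_{01}$ (two $(5)$-disks on $\widetilde L$ sandwich a $B^+$-disk living at the negative end $\bar\Lambda^+$); the $(1_+)$-analogue yields $B^-$ (an unbroken type-$(1)$ disk at the new positive end $\bar\Lambda^-$); the $(4\times 3_-)$-analogue contributes augmentation pull-backs that coincide with $\varepsilon_-$ by Lemma~\ref{lem:PullBackAug} applied to $\widetilde L$; and the $(6\times 6)$ stratum vanishes (see below). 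Summing and setting the result to zero gives $d_{10}^- H + H d_{01}^- = B^- - \phi_{10} B^+ \phi_{01}$, as required. The degree bounds $|B^\pm|, |H| \le \epsilon$ follow from Lemma~\ref{lem:ActionBound}(2), since $\widetilde L$ has length at most $\epsilon$.

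The main obstacle is the vanishing of the $(6\times 6)$ stratum. By Lemma~\ref{lem:SmallDisk} applied to $\widetilde L$, the only $(6)$-disk with a positive puncture at $\widetilde q$ and a positive mixed-chord asymptotic has its chord forced to be the analogue of $c_+$. A $(6\times 6)$ configuration must then pair such a disk with a second $(6)$-disk having a \emph{negative} $\widetilde q$-puncture together with a positive mixed-chord asymptotic. The non-existence of the second type of disk follows from the monotonicity estimate in the proof of Lemma~\ref{lem:SmallDisk}: its total energy is bounded above by the sum of the mixed-chord length and the primitive jump $h(\widetilde q_-)-h(\widetilde q_+)$, which becomes strictly negative once the length of the isotopy is shrunk enough to force $\ell(\widetilde c_\pm)$ small, contradicting positivity of the $d\alpha$-area. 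Hence, after shrinking the isotopy interval $[-,+]$, the $(6\times 6)$ stratum is empty, completing the proof.
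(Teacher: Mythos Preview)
Your argument has a genuine gap in the handling of the $(6\times 6)$ stratum, and this is precisely the place where the paper's proof requires real work.

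You misread Lemma~\ref{lem:SmallDisk}. That lemma asserts uniqueness of the disk $u^+$ \emph{among disks asymptotic to $c_+$ with a positive $q$-puncture and no further punctures}; it does \emph{not} say that $c_+$ is the only positive mixed chord that can bound a type-$(6)$ disk. In general, for any mixed chord $w_{10}$ (of arbitrary length in the action window), there may well exist rigid $(6)$-disks with a positive asymptotic at $w_{10}$ and a negative puncture at $q$; the count $\mathbf{m}_L(w_{10}^+,q^-)$ need not vanish. Your energy argument fails because the positive mixed chord $w_{10}$ contributes $e^B\ell(w_{10})$ to the energy bound, and $\ell(w_{10})$ is not small. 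Consequently the $(6\times 6)$ stratum is nonempty in general and contributes a nontrivial term that must be accounted for.

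This is exactly why the paper's homotopy is not simply $H_\alpha(x_{01})=\sum_{y_{10}}\mathbf{m}_L(x_{01}^+,y_{10}^+)\,y_{10}$ but rather $H=H_\alpha+H_\beta$, with the correction term
\[
\langle H_\beta w_{01},v_{10}\rangle = \mathbf{m}_L(v_{10}^+,(c_-)^-)\,\langle\phi_{01}w_{01},c_-\rangle.
\]
The $(6\times 6)$ contribution is identified with part of $\phi_{10}B^-\phi_{01}$ after rewriting via Proposition~\ref{prop:MasterEQ}(5), and the extra terms produced by that rewriting cancel against $d_{10}^+\circ H_\beta$. Without $H_\beta$, the homotopy identity does not close up.

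A secondary issue: you introduce a reverse cobordism $\widetilde L$ and redefine $\phi_{01},\phi_{10}$ to make the composition $\phi_{10}B^+\phi_{01}$ type-check. But Axiom~(A2) refers to the \emph{same} maps $\phi_{01}$ and $\psi_{10}$ that appear in Axiom~(A1), and Proposition~\ref{prop:AxiomA1} has already fixed these via the original cobordism $L$ (equation~\eqref{eq:phi}). The statement of the proposition contains a typo (the paper's proof in fact establishes $\phi_{10}B^-\phi_{01}\sim B^+$, consistent with the directions in~\eqref{eq:phi}); the correct response is to prove that identity on $L$, not to switch cobordisms.
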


\begin{proof}
To prove Axiom \ref{A2}, define (on generators, extend linearly) the map
$$H:  RFC^{[\delta, b)}_*(+)\rightarrow RFC^{[a, \delta)}_{*+1}(+),$$
which is defined by

\begin{equation*}
H \coloneqq H_\alpha + H_\beta,
  \end{equation*}
where
\begin{align}
\nonumber  & \langle H_\alpha w_{01}, v_{10} \rangle \coloneqq \mathbf{m}_L(w_{01}^+,  v_{10}^+)\\
\label{eq:Hbeta}   & \langle H_\beta w_{01}, v_{10} \rangle \coloneqq \mathbf{m}_L(v_{10}^+,  (c_-)^-)\langle \phi_{01}w_{01},c_{-}\rangle.
  \end{align}
The degree of $H$ is tautologically bounded from above by 0.
It suffices to show that
\begin{equation}
  \label{eq:htpy}
\langle (\phi_{01} B^- \phi_{01}  - B^+) x_{01}, y_{10} \rangle = 
\langle (d_{10}^+ H + H d_{01}^+)x_{01}, y_{10} \rangle.
\end{equation}
Apply Proposition \ref{prop:MasterEQ}(4) to analyze $\partial \mathcal{M}^1(x_{01}^+  \mathbf{z}  y_{10}^+ \mathbf{z'} ; L)$
for possibly empty words of pure cords  $\mathbf{z} ,\mathbf{z'}.$  The broken configurations that constitutes this boundary are shown in Figure \ref{fig:breaking4}. If the count these boundary strata weighted by augmentations, then the strata of type $(4\times 3_-)$ all cancel; for this reason, these configurations can be ignored.

The right-hand side of \eqref{eq:htpy} corresponds to the boundary strata of the following types:
\begin{itemize}
\item[($2_+ \times 4$):] These boundary points correspond to all terms in $d_{10}^+\circ H_\alpha-H_\alpha \circ d_{01}^+$;
 \item[$(5 \times 5 \times 1_-)$:] With the additional requirement that the component in $(1_-)$ has one positive mixed Reeb chord asymptotic to $c_-$, these boundary points correspond to the terms in $H_\beta \circ d_{01}^+$ (here we have used the chain map property of $\phi_{01}$ established in Proposition \ref{prop:AxiomA1} together with Equation \eqref{eq:Hbeta}).
 \end{itemize}
 However, we also have:
 \begin{itemize}
 \item[($\dagger$)] The broken configurations that correspond to $d_{10}^+\circ H_\beta$, which are of type ($2_+ \times 5$) (which, thus, are not boundary points of ($4$)).
 \end{itemize}
 We continue by analyzing the left-hand side of Equation \eqref{eq:htpy}.
 \begin{itemize}
 \item [($1_+$)]: These  components equal the $\langle  B^+ x_{01}, y_{10} \rangle$ term on the left hand side.
 \end{itemize}
By unraveling the definitions, the remaining term on the left-hand side is computed to be equal to
\begin{eqnarray*}
\phi_{10} B^- \phi_{01}(x_{01}) & = & \phi_{10} B^- \left(  \mathbf{m}_L(x_{01}^+, q^{+}) c_{-} +
\sum_{y_{01}} \mathbf{m}_L(x_{01}^+, y_{01}^-)y_{01}\right) \\
& =& 
\phi_{01} \left(\sum_{z_{10}}\left[
     \mathbf{m}_{\bar{\Lambda}^-}( z_{10}^+, (c_-)^-) \mathbf{m}_L(x_{01}^+, q^{+})  + \right.\right. \\
  & + & \left.
 \sum_{y_{01}} \left. \mathbf{m}_{\bar{\Lambda}^-}(y_{01}^+, z_{10}^+) \mathbf{m}_L(x_{01}^+, y_{01}^-)
\right] 
 z_{10} \right) \\
 & = &  \sum_{z_{10}, w_{10}} \left[
 \mathbf{m}_L(w_{10}^+, z_{10}^-) \mathbf{m}_{\bar{\Lambda}^-}( z_{10}^+, (c_-)^-) \mathbf{m}_L(x_{01}^+, q^{+}) \right.
 \\
 &&  +
 \sum_{y_{01}}  \left. \mathbf{m}_L(w_{10}^+, z_{10}^-) \mathbf{m}_{\bar{\Lambda}^-}(y_{01}^+, z_{10}^+) \mathbf{m}_L(x_{01}^+, y_{01}^-)
 \right]w_{10}.
\end{eqnarray*}
Here we find the remaining types of broken configurations that arise in the boundary $\partial \mathcal{M}^1(x_{01}^+  \mathbf{z}  y_{10}^+ \mathbf{z'} ; L)$:
\begin{itemize}
\item [($5 \times 5 \times 1_-$):] With the additional constraint that no Reeb chord asymptotic of the component $1_-$ is asymptotic to $c_-$, these types correspond to the second term on the right hand side.
\end{itemize}
What remains is the term ($6\times6$). For this we need to analyze the term
$$\mathbf{m}_L(w_{10}^+, z_{10}^-) \mathbf{m}_{\bar{\Lambda}^-}( z_{10}^+, (c_-)^-) \mathbf{m}_L(x_{01}^+, q^{+})$$
in the expression $\phi_{10} B^- \phi_{01}(x_{01})$ further. We start by using Proposition \ref{prop:MasterEQ}(5) to rewrite this as
$$(\mathbf{m}_{\bar{\Lambda}^+}(w_{10}^+, z_{10}^-) \mathbf{m}_{L}( z_{10}^+, (c_-)^-)+\mathbf{m}_{L}(w_{10}^+,q^-)\mathbf{m}_{L}(q^+,(c_-)^-) \mathbf{m}_L(x_{01}^+, q^{+}).$$
These broken strata are shown in Figure \ref{fig:breaking5}. Since $\mathbf{m}_{L}(q^+,(c_-)^-)=1$ holds by Lemma \ref{lem:SmallDisk}, the previous expression can be simplified even further. We have thus found
\begin{itemize}
\item[($6 \times 6$):] This corresponds to the second term in the latter expression, i.e.~the term $\mathbf{m}_{L}(w_{10}^+,q^-)\mathbf{m}_L(x_{01}^+, q^{+})$.
\end{itemize}
The first term $\mathbf{m}_{\bar{\Lambda}^+}(w_{10}^+, z_{10}^-) \mathbf{m}_{L}( z_{10}^+, (c_-)^-)\mathbf{m}_L(x_{01}^+, q^{+})$ of the latter expression remains to be taken care of, since it is not inside boundary $\partial \mathcal{M}^1(x_{01}^+  \mathbf{z}  y_{10}^+ \mathbf{z'} ; L)$.  However, these terms cancel with the remaining contribution on the right-hand side of Equation \eqref{eq:htpy}, since
\begin{itemize}
\item[($\dagger$)] The first term
  $$\mathbf{m}_{\bar{\Lambda}^+}(w_{10}^+, z_{10}^-) \mathbf{m}_{L}( z_{10}^+, (c_-)^-)\mathbf{m}_L(x_{01}^+, q^{+})$$
  is equal to $d_{10}^+\circ H_\beta$ by Formula \eqref{eq:Hbeta} (it is not a part of the boundary of the moduli space ($4$)).
  \end{itemize}
We have thus established Equation \eqref{eq:htpy} as sought.
\end{proof}

\subsubsection{No double points}
\label{sec:nodoublepoints}

Simply consider the summands $C_*(\pm)=RFC_*^{[0, b)}(\pm)$ and $C^*(\pm)=RFC_*^{(a, 0]}(\pm).$ We define $\phi_{01}, \phi_{10}$ as in equation (\ref{eq:phi}), noting that $\mathbf{m}_L(x_{01}^+, q^-) = 0$ since there is no double point $q.$ 
The rest of the arguments are  essentially  simplifications of the ones above, since curves of type ($6$) and ($7$) do not exist.
 Note that bifurcations can occur that were not considered in Section \ref{sec:onedoublepoint}. Notably, a pair of mixed chords may be born or die, making one of the above pair of $RFC$-complexes non-isomorphic. 
But even in the event of a birth (death is a reverse-time birth), the statement and proof of
Lemma \ref{lem:HomotopyInverse} still apply.
To recap our argument: since there are no double points, we can apply Proposition \ref{prop:Bifurcation} to replace the DGA-morphism induced by the Lagrangian concordance with a stable-tame isomorphism (STI) of DGAs.
Lemma \ref{lem:PullBackAug} equates how the pull-back of the augmentation induced by the concordance is the same as the change in augmentation induced by the STI in \cite[Lemma 3.4]{DRS2}. 
Thus, as outlined shortly after stating Theorem \ref{thm:pwc}, we can apply the techniques of  \cite[Section 3]{DRS2}, to both complexes $C_*(\pm)$ and $C^*(\pm),$ to prove Theorem \ref{thm:pwc}.

%
%
%

\section{Proofs of Theorem  \ref{thm:Main}, Theorem \ref{thm:NonDegen} and Theorem \ref{thm:Interlink}}
\label{sec:Proofs}

We need the following variation of \cite[Lemma 3.1]{DRS2}, which allows us to estimate the oscillation of a contact Hamiltonian based on the change of lengths of a pair of Reeb chords.

\begin{lem}
  \label{lem:main}
  
Consider a smooth one-parameter family $c(t) \subset (Y,\alpha)$ of Reeb chords with boundary on $a(t) \in \Lambda_0,  b(t) \in \Lambda_1(t),$  where $\Lambda_0 \subset Y$ is a fixed Legendrian submanifold and $\Lambda_1(t) \subset Y$ is a Legendrian isotopy. (Here $a$ is either the endpoint or the starting point, and $c(t)$ is allowed to be of zero length, i.e.~a double point $\Lambda_0 \cap \Lambda_1(t)$.) Then
\begin{equation}
\label{eq:contham}
\frac{d}{dt}\ell (c(t)) = \alpha(X_{b(t)}(t))=H_t(b(t))
\end{equation}
where $X_{b(t)} \in TY$ is the contact vector field that generates $\Lambda_1(t)$. In particular, if $c(t)$ and $d(t)$ are two Reeb chords as above, then
$$|(\ell (c(0)) - \ell(d(0)) - (\ell (c(1))-\ell(d(1)))| \le \|H_t\|_{\OP{osc}}$$
is satisfied.

Consider a smooth one-parameter family $c_1(t)$ of pure Reeb chords with initial and terminal endpoints at $a_1(t), b_1(t) \in  \Lambda_1(t).$
Then
$$ \frac{d}{dt}\ell(c_1(t))=\alpha(X_t(b_1(t)))-\alpha(X_t(a_1(t)))=H_t(b_1(t))-H_t(a_1(t)).$$
Hence, the inequality $|\ell(c_1(1))-\ell(c_1(0))| \le \|H_t\|_{\OP{osc}}$ holds.

\end{lem}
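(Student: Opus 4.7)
The plan is to compute $\frac{d}{dt}\ell(c(t))$ via the standard first variation formula for the action $\int \alpha$ along a family of curves with moving endpoints, and then observe that all contributions except those coming from the contact Hamiltonian at the moving Legendrian vanish.

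First, I would parametrize the family as $c(\tau,t) \colon [0,1] \times I \to Y$ so that $\partial_\tau c(\tau,t) = \ell(c(t))\,R_\alpha(c(\tau,t))$, giving $\ell(c(t)) = \int_0^1 \alpha(\partial_\tau c)\,d\tau$. Differentiating under the integral and applying Cartan's formula (equivalently, the identity $\partial_t\alpha(\partial_\tau c) - \partial_\tau\alpha(\partial_t c) = d\alpha(\partial_t c,\partial_\tau c)$), I get
\begin{equation*}
\frac{d}{dt}\ell(c(t)) \;=\; \int_0^1 d\alpha(\partial_t c,\partial_\tau c)\,d\tau \;+\; \bigl[\alpha(\partial_t c)\bigr]_{\tau=0}^{\tau=1}.
\end{equation*}
The integrand vanishes because $\partial_\tau c$ is a multiple of $R_\alpha$ and $\iota_{R_\alpha}d\alpha = 0$. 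So only the boundary terms remain.

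Next, I evaluate the boundary terms. At the endpoint lying on the fixed Legendrian $\Lambda_0$, the variation vector $a'(t)$ is tangent to $\Lambda_0 \subset \xi$, hence $\alpha(a'(t)) = 0$. At the endpoint $b(t) \in \Lambda_1(t)$, I write $b(t) = \phi^t(\tilde b(t))$ where $\phi^t$ is the contact isotopy with Hamiltonian $H_t$ and $\tilde b(t) \in \Lambda_1(0)$, so that $b'(t) = X_t(b(t)) + (\phi^t)_* \tilde b'(t)$. The second summand lies in $T_{b(t)}\Lambda_1(t) \subset \xi_{b(t)}$, while $\alpha(X_t(b(t))) = H_t(b(t))$ by the defining property of the contact Hamiltonian. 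This yields the first displayed formula; the orientation of the chord only affects an overall sign which is absorbed by writing the endpoint formula at $b(t)$. For the pure-chord case $c_1(t)$, both endpoints $a_1(t), b_1(t) \in \Lambda_1(t)$ contribute identically by the same reasoning, producing the difference $H_t(b_1(t)) - H_t(a_1(t))$.

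Finally, the oscillation estimates are obtained by integrating these pointwise identities over $t \in [0,1]$. For two chords $c(t), d(t)$ as in the mixed case, the difference of derivatives equals $H_t(b_c(t)) - H_t(b_d(t)) \le \max_Y H_t - \min_Y H_t$ pointwise (similarly bounded below), so integrating and taking absolute values gives the bound by $\|H_t\|_{\OP{osc}}$; the pure-chord inequality follows identically. There is no real obstacle here — the only mild care needed is in the pure-chord case where one must check that $\alpha(a_1'(t))$ again equals $H_t(a_1(t))$ (rather than vanishing) because $\Lambda_1(t)$ is moving, which is exactly the argument applied to $b(t)$ above.
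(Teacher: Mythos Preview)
Your argument is correct and is essentially the direct first-variation computation that underlies the result. The paper takes a slightly different route: for the pure-chord case it simply cites \cite[Lemma 3.1]{DRS2} (which is precisely the computation you carry out); for the mixed-chord case with positive length it reduces to that lemma by cutting off the contact Hamiltonian so that it vanishes near the fixed Legendrian $\Lambda_0$, thereby turning the mixed situation into one where both endpoints lie on moving Legendrians; and for the zero-length case it pushes $\Lambda_0$ off by a small Reeb flow $\phi^\epsilon_{R_\alpha}$, which shifts all chord lengths by the constant $\epsilon$ and so leaves the derivative unchanged while avoiding the degenerate moment.

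Your direct approach is more self-contained and arguably cleaner, since it handles all cases uniformly without needing the cut-off or push-off tricks. One small point worth making explicit: at a zero-length moment your parametrization $c(\tau,t)$ degenerates to a constant map in $\tau$, but the boundary-term computation still goes through verbatim, and both sides of \eqref{eq:contham} are continuous in $t$, so the identity extends. Also, your remark that the sign ``is absorbed'' is slightly imprecise --- the formula picks up a sign depending on whether $b(t)$ is the terminal or initial point of the chord --- but this does not affect the oscillation estimates, which are symmetric in that sign.
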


\begin{proof}
  The calculation for the pure chords $c_1$ was proven in \cite[Lemma 3.1]{DRS2} for general contact manifolds, so we only need to verify the computation for $c.$

 When $c(t)$ has positive length, we may perform the computation for a contact vector field $X$ that can be taken to vanish along all of $\Lambda_0$ after cutting off the contact Hamiltonian in some neighborhood. The computation of $\frac{d}{dt}\ell (c(t))$ is then a direct application of 
  \cite[Lemma 3.1]{DRS2}.

 In the case when $c(t)$ is of length zero, we replace $\Lambda_0$ by its image $\phi^\epsilon_{R_\alpha}(\Lambda_0)$ under the time-$\epsilon$ Reeb flow, and instead compute $\frac{d}{dt}\ell (\tilde{c}(t))$ for chord between $\phi^\epsilon_{R_\alpha}(\Lambda_0)$ and $\Lambda_1(t)$. 
Here $\tilde{c}(t)$ corresponds to $c(t)$ under the natural 
bijective correspondence of Reeb chords between, on one hand, $\Lambda_0$ and $\Lambda_1(t)$ and, on the other hand, $\phi^\epsilon_{R_\alpha}(\Lambda_0)$ and $\Lambda_1(t)$. Under this bijection, 
gotten by ``removing" the first time-$\epsilon$ portion, 
the chord lengths differ by the constant $\epsilon$.

%
%
\end{proof}

\subsection{Proof of Main Theorem  \ref{thm:Main}}

Recall the notation from Theorem  \ref{thm:Main}.
A Legendrian $\Lambda \subset (Y, \alpha)$ is moved by a contact isotopy $\phi^t_{\alpha,H},$ with $0 \le t \le 1.$ The Hamiltonian $H_t \colon Y \to \R$ satisfies  $\|H_t\|_{\OP{osc}} < \min\left\{l, \ell(c_k)\right\}.$ 
Here $0 < l \le \infty$ is chosen such that there exists an augmentation $\varepsilon: \mathcal{A}^l(\Lambda) \rightarrow \kk,$ and $c_k$ is the Reeb chord with the $k$-th shortest length.

Let $1\gg\epsilon>0$ be a constant to be determined.
Let $\Lambda_0 = \Lambda_0^t = \Lambda$ and $\Lambda_1^t=\phi^t_{\alpha,H}(\Lambda_0')$ where $\Lambda_0'$ is a perturbation of a push-off (of flow $\epsilon$ in the positive Reeb direction) of $\Lambda_0.$ Let $\bar{\Lambda}^t = \Lambda_0^t \cup \Lambda_1^t.$

First set $-a_0 = 2\epsilon = b_0$ and $l(t)=\int_0^t \max H_\tau-\min H_\tau d\tau.$
We claim that $RFC_*^{[a_t,b_t)}(\Lambda_0,\Lambda_1^0;\varepsilon_0)$ (notation as in Definition \ref{defn:MCC}) is quasi-isomorphic to the Morse complex of $\Lambda_0.$
This can be seen since $C^*(a_0, 0]=0$ and $d_{01}^0$ counts only holomorphic strips which approximate the gradient flows the function which represents $\Lambda_1^0$ graphical in a small $J^1\Lambda_0^0$ neighborhood of $\Lambda_0^0$ for a suitable cylindrical almost complex structure; see e.g.~\cite{GDRLift}.

We wish to adapt our situation in order to replicate as much as possible \cite[Section 3.4]{DRS2}, which proves the special case of Main Theorem \ref{thm:Main} when $(Y = P \times\R_z, \alpha = dz-\theta).$
In that case, due to the global $\partial_z$ Reeb flow, we could choose a large $N\gg1$ push off instead of a small $\epsilon \ll1$ one.
The complex considered in the previous set-up is somewhat simpler since there never are any $x_{10}$ chords throughout the isotopy (or chords of 0-length).
The chords at $t=0$ in the action window $[N-2\epsilon, N+2\epsilon]$ form a subcomplex which is quasi-isomorphic to the Morse complex of $\Lambda_0.$

The action of chords ($\approx N$ versus $\approx 0$) which generate a Morse complex, and the generators of the complex (type $x_{01}$ versus type $x_{01}, x_{10}$) are the only distinctions between the existing argument for Main Theorem \ref{thm:Main} when $(Y = P \times\R_z, \alpha = dz-\theta)$ presented in  \cite[Section 3.4]{DRS2}, and the needed argument for Main Theorem \ref{thm:Main} when $(Y,\alpha)$ is arbitrary. 

We briefly sketch this argument, deferring full details to \cite[Section 3.4]{DRS2}.
Label the Morse generators $x_1, \ldots, x_m.$
For each family $x_i(t)$ consider families of two barcodes which are approximately  based on an action windows
$[\ell(x_i(t)) - (l-l(t))+\epsilon, {{\ell(x_i(t))+\epsilon}}]$ 
and
$[\ell(x_i(t)) - \epsilon, \ell(x_i(t))+ (l-l(t)) -\epsilon]$
{{as specified in \cite[Equations (3.6) and (3.7)]{DRS2}.
\cite[Lemma 3.7]{DRS2} implies that in
}}
each action window we see 
{{an infinite}}
 bar that starts at $\ell(x_i(t)).$
Theorem \ref{thm:pwc} implies we can use the Barcode Proposition \ref{prop:Barcode}
{{
for this isotopy.
By looking at both action windows simultaneously during this isotopy,
}}
we verify that the bar can only disappear in the amount of time it takes $\ell(x_i(t))$ to coincide with one of the values of some other mixed chord that was not part of the original Morse complex. 
The rate of change of the difference of these action values is controlled by $\frac{d}{dt} l(t)$ as calculated in
{{Lemma \ref{lem:main}, which brings in the $\|H_t\|_{\OP{osc}}$ term as stated in the Main Theorem \ref{thm:Main}.
}}
This sketch glosses over the difference between the endpoints of the bars in a barcode, and their representations as Reeb chords. However, this important distinction has been addressed 
{{in case (ii) in \cite[Section 3.4]{DRS2}, and the argument there is
}} independent of the contact manifold and form.

\subsection{Proof of Theorem \ref{thm:NonDegen}}

Rosen and Zhang prove that, for any submanifold $N \subset Y$ of $\dim(N) =n,$ the distance $\delta_\alpha$ is either non-degenerate or $\delta_\alpha \equiv 0$ \cite[Theorem 1.9]{RosenZhang}.
So it suffices to find $\Lambda_1, \Lambda_2$ which are the images of contact isotopies of $\Lambda,$ and for which $\delta_\alpha(\Lambda_1, \Lambda_2) > 0.$ 

Consider any closed Legendrian $ \Lambda$ and fix a contact form $\alpha$ on $Y$. Let $\Lambda_i$, $i=1,2$, be given by $j^1(\epsilon^2\cdot i\cdot f)$ in a standard contact-form preserving jet-neighborhood of the form
$$D_{\le r}T^*\Lambda \times [-5\epsilon,5\epsilon]_z \subset (J^1\Lambda,dz-pdq)$$
inside $Y$ which identifies $\Lambda$ with $j^10$, where $f \colon \Lambda \to [0,1]$ is a Morse function, and $0<\epsilon \ll 1$. Let
$$0<\min_{f}=m_1<\ldots<m_k=\max_{f}$$
be an enumeration of the critical values of $f$ that we, moreover, assume correspond to distinct critical points. After post-composing the Morse function with a suitable change of coordinates, we may assume 
\begin{equation}
  \label{eq:spectrum}
  2\min(f) > \max(f)>0.
\end{equation}

Using the notation of Definition \ref{defn:MCC} for the Rabinowitz Floer complex with action window
$$ RFC_*^{[a_t,b_t)}(\Lambda,\Lambda_t;\varepsilon_t)$$
we choose the following: the action window is constantly equal to $[a_t, b_t) = [-2\epsilon, +2\epsilon);$ the initial action threshold, $l=6\epsilon,$ is sufficiently smaller than the length of any contractible periodic Reeb orbit $\gamma$ of degree $|\gamma|\le 1$; 
by the existence of the above standard neighborhood, all Reeb chords of $\Lambda$ and $\Lambda_i$ that start and end on the same component have length at least $7\epsilon$, so $\varepsilon_0$ is necessarily trivial;
and $\Lambda_t$ is an isotopy from $\Lambda_1$ to $\Lambda_2$ with oscillation $l(1) \le \epsilon.$ 

If we had studied a Legendrian isotopy from $\Lambda_1$ to $\Lambda_2$ generated by a Hamiltonian $H_t$ such that $l(1)$ no longer satisfies $l-l(1) > b_1-a_1$ (so that our technology breaks down), we automatically get a desired lower bound
$$
\int_0^1 \max |H_t| dt \ge \frac{1}{2}\|H_t\|_{osc}  > \frac{1}{2}l(1) 
{{
\ge \frac{1}{2}
(l-(b_1 -a_1)) = \frac{1}{2}(6\epsilon-4\epsilon)=\epsilon.
}}
$$
So assume 
 we can apply Theorem \ref{thm:pwc}, and thus 
 the isotopy deforms the barcode of $RFC_*^{[-2\epsilon, +2\epsilon)}(\Lambda,\Lambda_1;\varepsilon_1)$ to that of $RFC_*^{[-2\epsilon, +2\epsilon)}(\Lambda,\Lambda_2;\varepsilon_2)$ via the bifurcations specified by the Barcode Proposition \ref{prop:Barcode}.

 During this isotopy, one of the following scenarios occur. Case (1): some starting/endpoint of a bar in the barcode of $RFC_*^{[-2\epsilon, +2\epsilon)}(\Lambda,\Lambda_1;\varepsilon_1)$ survives, and by {{inequality}}
\eqref{eq:spectrum}, moves at least a distance $\epsilon^2(2\min(f)-\max(f)) > 0$.  Case (2): some bar in the barcode dies or escapes the action window. Note that the concerned bars all are of length at least $\min\{\epsilon^2(m_2-m_1),\ldots,\epsilon^2(m_k-m_{k-1})\}$ by the assumption made on distinct critical values. (In fact, $RFC_*^{[-2\epsilon, +2\epsilon)}(\Lambda,\Lambda_i;\varepsilon_i)$ for $i=1,2$ are the Morse complexes that compute the singular homology $H_*(\Lambda;\Z_2).$ So even more can be said about the barcode, but we do not use this.) 
Theorem \ref{thm:pwc} together with the Barcode Proposition \ref{prop:Barcode} then implies that
$$\int_0^1 \max |H_t| \ge C$$
holds for this Hamiltonian that generates a Legendrian isotopy from $\Lambda_1$ to $\Lambda_2$, where
$$ C \coloneqq \min\left\{\epsilon^2 \left(2\min(f)-\max(f)\right),\epsilon^2(m_2-m_1),\ldots,\epsilon^2(m_k-m_{k-1})\right\}>0 $$
is a constant that only depends on the contact form $\alpha$ and the Legendrians $\Lambda, \Lambda_1,\Lambda_2$. Here we use  Lemma \ref{lem:main}  to relate change in Reeb chord length and the value of the contact Hamiltonian $H_t$ at the endpoint of the Reeb chord of $\Lambda_t$ that corresponds to the moving bar.

We conclude that $\delta_\alpha({{\Lambda_1, \Lambda_2}}) > C$
holds as sought.

 \qed

\subsection{Proof of Theorem \ref{thm:Interlink}}
Let $\Lambda_0^t = \phi_{\alpha, H_t}^t(\Lambda_0),$ with $H_t \ge c >0$, $\Lambda_1^t = \Lambda_1$ and 
$\bar{\Lambda}^t = \Lambda_0^t \cup \Lambda_1^t.$
Recall the assumptions that $\Lambda_0, \Lambda_1$ are augmented
 and that no pseudo-holomorphic plane appears in the SFT-compactifications from \cite{BEHWZ}  (see Remarks \ref{rem:ClosedOrbits} and \ref{rem:ClosedOrbits2}). Thus we can set $l = +\infty$
 and consider the sequence of complexes $RFC_*(\Lambda_0^t,\Lambda_1)$, whose barcode in any finite action window is well-defined for all $t \ge 0$  and varies continuously as in Theorem \ref{thm:pwc}. 
 Since each  finite  bar is a pairing of two mixed chords of relative grading 1,  the hypothesis implies the barcode at  $t=0$  either  has
 an infinite bar with a starting point of positive action,  or a finite bar with starting point of negative action, and endpoint of positive action.
 The interlinkedness property then follows by the continuity of the barcode, together with Lemma \ref{lem:main}.  Namely, according to the latter, all endpoints of bars moves in the direction of decreasing action, with a speed bounded from below by $c>0.$ (Here we use a large enough action window for the given Hamiltonian.)

\section{Computations for the standard Legendrian $\RP^n$ (Proof of Theorem \ref{thm:RP})}
\label{sec:RP}

This section concerns computations of the Rabinowitz Floer complex for the standard Legendrian $\RP^n \subset \RP^{2n+1}$, with the goal of proving Theorem \ref{thm:RP}.

Recall the definition
$$ \Lambda_0 \coloneqq S^{2n+1} \cap \Re \C^{n+1} \subset (S^{2n+1},\xi_{st})$$
of the standard Legendrian sphere inside the standard contact sphere, as defined in Section \ref{sec:Introduction}. By taking the quotient under the antipodal map we obtain the standard Legendrian embedding
$$\tilde\Lambda_0 \coloneqq \Lambda_0/\Z_2 \subset S^{2n+1}/\Z_2 = \RP^{2n+1}$$
of $\RP^{n}$ into the standard contact projective space.

\begin{rem} It is possible to also pass to further quotients $S^{2n+1}/\Z_k$ with $k \ge 2$ in order to obtain Legendrian embeddings in the standard contact (higher dimensional analogues of) ``Lens spaces".  Most of the analysis carried out in this section should be possible to apply with only minor modifications in order to derive similar results also for general $k \ge 2$. However, we do not pursue this direction further.
  \end{rem}

Similarly, any linear Lagrangian subspace $V^{n+1} \subset \C^{n+1}$ gives rise to a Legendrian embedding
$$ \Lambda_V \coloneqq S^{2n+1} \cap V \subset (S^{2n+1},\xi_{st})$$
of $S^{n}$, and hence a corresponding Legendrian embedding
$$\tilde\Lambda_V\coloneqq \Lambda_V / \Z_2 \subset (\RP^{2n+1},\xi_{st})$$
of $\RP^{n}$. (Note that $\tilde\Lambda_0=\tilde\Lambda_{\Re \C^{n+1}}$.) All of these different Legendrian embeddings are clearly Legendrian isotopic via an ambient contact isotopy that preserves the round contact form. In particular, we obtain a $C^\infty$-small Legendrian push-off of $\tilde\Lambda_0$ by considering $\tilde\Lambda_V$ for a Lagrangian plane $V \subset \C^{n+1}$ which is sufficiently close to $\Re\C^{n+1}$.

Recall that the round contact $S^{2n+1}$ is equipped with the contact form $\alpha_{st}=\frac{1}{2}\sum_i (x_idy_i-y_idx_i)$, and a time-$t$ Reeb flow given by complex scalar multiplication by $e^{i2t}$. Hence, $S^{2n+1}$  is foliated by simple closed Reeb orbits, all of whose periods are equal to $\pi$. It follows that the round $\RP^{2n+1}$ is foliated by simple closed Reeb orbits of period $k\pi/2$ for each $k=1,2,3,\ldots$. The orbits of period $k\pi/2$  form a manifold $\Gamma_{k} \cong \CP^n$ of Reeb chords which are non-degenerate in the Bott sense; see the work \cite{Bourgeois} by Bourgeois. Note that these Reeb orbits are contractible if and only if $k=2m$.

The Reeb chords on $\tilde\Lambda_V$ all come in connected families $\mathcal{Q}(\tilde\Lambda_V)_k^{\OP{Bott}} \cong \RP^{n}$ labeled by the Reeb chord lengths $k\pi/2$, $k=1,2,3,\ldots$. These families are also smooth manifolds which are non-degenerate in the Bott sense.

In Sections \ref{sec:cz} and \ref{sec:relcz} below we will compute the Conley--Zehnder indices of these orbits and chords. In addition, we compute the Conley--Zehnder indices of the chords and orbits after a generic perturbation by a Morse function on the Bott manifolds as described in \cite{Bourgeois}, which makes the contact form non-degenerate. The conclusion of Proposition \ref{prop:planedegree} in Section \ref{sec:cz} is that the Chekanov--Eliashberg DGAs of the standard Legendrian $\RP^n \subset \RP^{2n+1}$ is well-defined for the aforementioned non-degenerate perturbations of the round contact form. In Proposition \ref{prop:halfplanedegree} from Section \ref{sec:relcz} the degree of the Reeb chords on $\RP^n$ are computed. This is useful for showing that $\RP^n$ admits augmentations, which of course is crucial for defining the Rabinowitz Floer complex in Proposition \ref{prop:RCF}.

\subsection{Conley--Zehnder index of a periodic Reeb orbit}
\label{sec:cz}

Here we perform Conley--Zehnder index computations for the periodic Reeb orbits on the round $\RP^{2n+1}$, as well as its non-degenerate perturbations. See Appendix \ref{sec:gradingperiodic} for the definition of the Conley--Zehnder index.

Recall that precisely the even covers of the simple orbits are contractible. In addition, since the universal cover of $\RP^{2n+1}$ is  $S^{2n+1}$, it follows that $\pi_2(\RP^{2n+1})=0$, and any two planes with the same asymptotic Reeb orbit are homotopic through planes of the same kind. 
 Denote by $\pi \colon \RP^{2n+1} \to \CP^{n}$ the prequantum bundle projection, for which $D\pi|_\xi$ is injective.
For any contractible Reeb orbit $\gamma \in \Gamma_{2m},$ take the trivialization of $\xi$ along the Reeb orbit $\gamma$ that is induced by pulling back a symplectic frame at the point $\pi(\gamma) \in \CP^n$ under the bundle projection $\pi.$

We will apply the index formula \eqref{eq:index} to prove the following.

\begin{lem}
\label{lem:relchern}
Consider a plane $u \colon \C \to \R \times \RP^{2n+1}$ which is asymptotic to $\gamma \in \Gamma_{2m}.$
  The relative first Chern class with respect to the above choice of trivialization satisfies
  $$c^\xi_{1,\OP{rel}}[u]=c^{\CP^{n}}_1[\pi \circ u]=m(n+1), \:\: m \ge 1,$$
  where $c^{\CP^{n}}_1$ is the usual first Chern class and the detailed definition of $c^\xi_{1,\OP{rel}}$ is in Appendix \ref{sec:gradingperiodic} .

  Here we identify $\pi \circ u$ with a sphere that is homologous to $m\cdot L$, where $L \in H_2(\CP^n)$ denotes the homology class of a line.
\end{lem}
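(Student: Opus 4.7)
The plan is to interpret the relative Chern class computation as an honest Chern number on the compactified sphere in $\CP^{n}$, using the prequantum bundle structure $\pi \colon \RP^{2n+1} \to \CP^{n}$ to reduce the entire calculation to classical projective geometry.

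First I would note that the bundle map $d\pi \colon \xi \to T\CP^{n}$ is fiberwise a $\C$-linear isomorphism (its kernel along $TY$ is exactly the Reeb line), so it induces an isomorphism of complex vector bundles $\xi \cong \pi^{*}T\CP^{n}$ over $\RP^{2n+1}$. Consequently, writing $\mathrm{pr}_Y \colon \R\times\RP^{2n+1}\to\RP^{2n+1}$ for the projection, there is a canonical complex bundle isomorphism
\[ u^{*}\xi \;\cong\; (\pi\circ \mathrm{pr}_Y\circ u)^{*}T\CP^{n}. \]
Under this isomorphism, the trivialization of $\xi$ along $\gamma$ specified in the statement (the pullback of a symplectic frame at the single point $\pi(\gamma)\in\CP^{n}$) is, by construction, the pullback of a \emph{constant} frame at $\pi(\gamma)$. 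In particular, it extends canonically across the puncture once we pass to $\CP^{n}$.

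Second, since $\gamma$ lies in a fiber of $\pi$, the composition $v \coloneqq \pi\circ \mathrm{pr}_Y\circ u \colon \C \to \CP^{n}$ is asymptotic at infinity to the single point $\pi(\gamma)$ and hence compactifies to a smooth sphere $\bar v \colon S^{2}\to\CP^{n}$. The relative first Chern class $c^{\xi}_{1,\mathrm{rel}}[u]$ is, by definition, the algebraic count of zeros of a generic section of $u^{*}\xi$ agreeing at infinity with the specified trivialization; by the previous paragraph this count is identical to the algebraic count of zeros of the corresponding section of $\bar v^{*}T\CP^{n}$, which is precisely $c_{1}^{\CP^{n}}[\bar v]$. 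This proves the first equality.

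For the second equality I would compute the degree of $\bar v$, i.e.\ identify $[\bar v]=m\cdot L \in H_{2}(\CP^{n};\Z)$. The point is that the simple Reeb orbit on $(\RP^{2n+1},\alpha_{st})$ has period $\pi/2$ and its double cover—an orbit in $\Gamma_{2}$—lifts to a full Hopf fiber of $S^{2n+1}\to\CP^{n}$, which bounds a disk representing the generator $L$. Thus $\gamma\in\Gamma_{2m}$, being the $2m$-fold cover of the simple orbit, is the $m$-fold cover of that full Hopf fiber, so $\bar v$ has degree $m$. The conclusion $c_{1}^{\CP^{n}}[\bar v]=m(n+1)$ then follows from the standard identity $c_{1}(T\CP^{n})=(n+1)h$ paired with $\langle h,L\rangle=1$. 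The only step that requires a little care is this degree count, in particular keeping track of how the $\Z_{2}$-quotient relating $S^{2n+1}$ and $\RP^{2n+1}$ halves the length of simple orbits and hence doubles the apparent covering number; once that bookkeeping is done, the rest is a routine application of the Hopf fibration and the Chern class of $\CP^{n}$.
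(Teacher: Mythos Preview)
Your argument is correct. The first equality you handle exactly as the paper does, via the fiberwise isomorphism $D\pi|_\xi \colon \xi \to T\CP^{n}$ and the observation that the chosen trivialization extends over the compactified sphere. For the identification $[\bar v]=m\cdot L$ you take a genuinely different route: you lift through the double cover $S^{2n+1}\to\RP^{2n+1}$ and read off the degree from the fiber winding number in the Hopf fibration. The paper instead stays in $\RP^{2n+1}$ and uses Stokes' theorem: the symplectic area of $\pi\circ u$ in $\CP^{n}$ equals $\int_\gamma \alpha = m\pi$, which is $m$ times the area of a line, and then the monotonicity of $\CP^{n}$ converts area into Chern number. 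Both are short and standard; the paper's approach sidesteps the $\Z_2$ bookkeeping you flagged as needing care, while yours makes the integer $m$ appear purely topologically without any area normalization.
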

\begin{proof}
By Stokes' theorem the symplectic area of the chain $\pi\circ u$ in $\CP^n$ is equal to the length of the periodic Reeb orbit.
Here we have endowed $\CP^n=\RP^{2n+1}/S^1$ with the symplectic form induced by the curvature $d\alpha$ of the prequantization form $\alpha$ on $\RP^{2n+1}$ via symplectic reduction. (With these conventions, the area of a line in $\CP^n$ is equal to $\pi$.)  Since $\CP^n$ is monotone, that is the area and index are proportional,  the second equality follows. The first equality then follows since
  $$D\pi|_\xi \colon \xi \to T\CP^n$$
  is a symplectic bundle morphism which is an isomorphism on the fibers.
  \end{proof}

  \begin{lem}
    The Conley--Zehnder index of $\gamma \in \Gamma_{2m}$ with respect to the above choice of trivialization is equal to
  \begin{equation}
\label{eq:cz}
  \mu_{\OP{CZ}}(\mathbf{A}_\gamma-\delta\cdot\id)=n\end{equation}
independently of the multiplicity $2m$.

After a perturbation by a Morse function on the Bott manifold as in \cite{Bourgeois}, the non-degenerate orbit that corresponds to a critical point has Conley--Zehnder index
$$ \mu_{\OP{CZ}}(\mathbf{A}_\gamma-\delta\cdot\id)+i-2n=i-n$$
where $i \in \left\{0,1,\ldots,\dim \Gamma_{2m}=2n\right\}$ is the Morse index of the critical point.
\end{lem}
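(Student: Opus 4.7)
The plan is to pass to the prequantization picture $\pi \colon \RP^{2n+1} \to \CP^n$ and exploit the fact that the round Reeb flow on $\RP^{2n+1}$ is generated by the $S^1$-fiber action. First I would observe that, since $D\pi|_\xi$ is a fiberwise symplectic isomorphism onto $T\CP^n$ and $\xi$ is the connection preserved by the $S^1$-action, the linearized Reeb flow along any closed orbit $\gamma$, expressed in the trivialization of $\xi|_\gamma$ obtained by pulling back a fixed symplectic frame at $\pi(\gamma)\in\CP^n$, is the constant path $\Psi(t)\equiv \id_{\R^{2n}}$. In particular the asymptotic operator $\mathbf{A}_\gamma = -J_0\,\partial_t$ has $0$ as an eigenvalue of real multiplicity $2n$, which is the maximally degenerate Morse--Bott situation consistent with $\Gamma_{2m}$ being a smooth Bott family of real dimension $2n$.

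Next I would translate the shifted operator $\mathbf{A}_\gamma - \delta\cdot\id$ into a perturbed symplectic path. Via the standard dictionary between such shifts and the associated linearized flow, one obtains $\Psi_\delta(t) = \exp(J_0\delta t)$, which is a simultaneous rotation by angle $\delta t$ in each of the $n$ complex coordinate planes. Over the orbit period $T = m\pi$ each complex factor rotates by total angle $\delta m\pi$, and for $\delta>0$ sufficiently small this lies in $(0,2\pi)$. The standard Robbin--Salamon formula states that a rotation path $t\mapsto e^{i\theta t}$ with $\theta T\in (0,2\pi)$ contributes $2\lfloor \theta T/(2\pi)\rfloor +1 = 1$ to the Conley--Zehnder index; summing over the $n$ complex factors gives $\mu_{\OP{CZ}}(\mathbf{A}_\gamma - \delta\cdot\id) = n$, independent of $m$.

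For the non-degenerate perturbation I would invoke the Morse--Bott index formula of \cite{Bourgeois}. In that construction, one perturbs the contact form using a Morse function $f$ on the orbit manifold $\Gamma_{2m}\cong \CP^n$; the perturbed Reeb dynamics breaks the Bott family into non-degenerate orbits, one close to each critical point, and the Conley--Zehnder index of the orbit associated to a critical point $p$ satisfies
\[
\mu_{\OP{CZ}}(\gamma_p) \;=\; \mu_{\OP{CZ}}(\mathbf{A}_\gamma - \delta\cdot\id) \;+\; \mathrm{ind}_f(p) \;-\; \dim_\R \Gamma_{2m}.
\]
Substituting $\mu_{\OP{CZ}}(\mathbf{A}_\gamma - \delta\cdot\id) = n$ and $\dim_\R\Gamma_{2m} = 2n$ yields $n + i - 2n = i-n$, as claimed.

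The only genuine obstacle is bookkeeping on sign and normalization conventions: the direction of the $\delta$-shift of the asymptotic operator, the sign of the associated rotation path, and the overall shift in the Morse--Bott formula must all match the conventions fixed in Appendix~\ref{sec:gradingperiodic} and in \cite{Bourgeois}. Once those conventions are pinned down (as is consistent with the value $+n$ being recorded in \eqref{eq:cz} rather than $-n$), the computation above is purely algebraic and goes through without further analytic input.
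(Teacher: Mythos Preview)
Your proposal is correct and follows essentially the same route as the paper. Both arguments observe that in the pulled-back trivialization the linearized Reeb flow is constantly the identity, perturb by the small positive rotation $e^{i\delta t}\id$, and compute the resulting Robbin--Salamon index to be $n$; the paper phrases the last step as a single crossing-form contribution $\tfrac{1}{2}(2n)=n$ at $t=0$, whereas you factor into $n$ complex planes each contributing $1$, which is equivalent. The Morse--Bott shift is handled identically via the formula from \cite{Bourgeois} (recorded in the paper as \eqref{eq:czmorse}).
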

\begin{proof}The computation in the Bott setting was performed in \cite[(3.10)]{Wendl10}. Alternatively, one can argue as follows. The linearized Reeb flow in this trivialization is constantly equal to the identity map. Hence, the aforementioned perturbation of the Reeb flow by a small positive rotation $s \mapsto e^{i\delta s}\id$ in the contact planes, perturbs the non-degenerate return map to a non-degenerate one. We then compute the classical Conley--Zehnder index of this path, as defined in \cite{RobbinSalamon}, which gives $\frac{1}{2}(2n)=n$ as sought.

The Conley--Zehnder index after the perturbation by a Morse function on the Bott manifold follows from Formula \eqref{eq:czmorse}. Recall that $\dim \Gamma_{2m}=2n$ is independent of $m$.  
\end{proof}
In conclusion we have shown the following.
\begin{prop}
  \label{prop:planedegree}
For a plane $u \colon \C \to \R \times \RP^{2n+1}$ which is asymptotic to a Reeb orbit in the family $\Gamma_{2m}$, $m \ge 1$, for the round contact form, we have the identity
$$\OP{index}(u)=((n+1)-3)+n+2m(n+1)=(2m+2)(n+1)-4, \:\: m \ge 1,$$
for the expected dimension of the moduli space of unparameterized pseudo-holomorphic planes of the same type (with asymptotics that are free to vary in the Bott family $\Gamma_{2m}$).

Moreover, after a small perturbation of the contact form by a Morse function defined on the Bott manifolds, as constructed in \cite{Bourgeois}, all periodic Reeb orbits may be assumed to be non-degenerate, and to satisfy the bound
$$|\gamma|\ge 4(n+1)-4-2n=2n$$
on their degrees.
\end{prop}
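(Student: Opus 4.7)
The plan is to substitute the Conley--Zehnder computation and the relative Chern number recorded in Equation \eqref{eq:cz} and Lemma \ref{lem:relchern} into the SFT virtual dimension formula from Appendix \ref{sec:gradingperiodic}. For an unparameterized pseudoholomorphic plane asymptotic to a (non-degenerate or Bott-type) Reeb orbit $\gamma$ in the symplectization of a $(2n+1)$-dimensional contact manifold, that formula reads
\[
\OP{index}(u) \;=\; \bigl((n+1)-3\bigr)+\mu_{\OP{CZ}}(\mathbf{A}_\gamma-\delta\cdot\id)+2\,c^{\xi}_{1,\OP{rel}}[u].
\]
Plugging in the Bott values $\mu_{\OP{CZ}}=n$ and $c^{\xi}_{1,\OP{rel}}[u]=m(n+1)$ gives $(n-2)+n+2m(n+1)=(2m+2)(n+1)-4$, which is the first claim of the proposition. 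The value of $c^{\xi}_{1,\OP{rel}}[u]$ depends only on $\gamma$ and not on the particular plane $u$ chosen, because $\pi_2(\RP^{2n+1})=0$ forces any two such planes to be homotopic rel.\ asymptotic, a point already noted above Lemma \ref{lem:relchern}.

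For the non-degenerate refinement, the plan is to invoke the Morse--Bott perturbation recipe of Bourgeois \cite{Bourgeois}, as already recorded in the lemma preceding the statement: a generic Morse function on the Bott family $\Gamma_{2m}\cong \CP^n$ (of real dimension $2n$) breaks each circle of orbits into finitely many non-degenerate orbits, one for each critical point, and the Conley--Zehnder index of such a non-degenerate orbit equals the Bott value $n$ shifted by $i-2n$, where $i\in\{0,1,\ldots,2n\}$ is the Morse index. Substituting this shifted index gives
\[
\OP{index}(u)\;=\;(n-2)+(i-n)+2m(n+1),
\]
which, since both $2m(n+1)$ and $(i-n)$ are strictly increasing in $m$ and $i$ respectively, is minimized over $m\ge 1$ and $0\le i\le 2n$ at $(m,i)=(1,0)$, where it equals $(n-2)+(0-n)+2(n+1)=2n$. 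Hence every contractible periodic Reeb orbit $\gamma$ of the perturbed contact form satisfies $|\gamma|\ge 2n$, as claimed.

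The step that could a priori be the main obstacle--namely computing $\mu_{\OP{CZ}}$ and $c^{\xi}_{1,\OP{rel}}$ in an honest trivialization--has in fact already been disposed of by the two preceding lemmas, where the prequantum bundle projection $\pi\colon \RP^{2n+1}\to \CP^n$ is exploited to pull back a constant symplectic frame and to identify the area of the projected disk with its $c_1$-pairing via monotonicity of $\CP^n$. Granted those two inputs, the proof of Proposition \ref{prop:planedegree} reduces entirely to the two elementary substitutions above, together with the observation on $\pi_2(\RP^{2n+1})$ that makes the relative Chern number depend only on $\gamma$.
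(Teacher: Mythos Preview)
Your proposal is correct and follows essentially the same approach as the paper: both proofs amount to substituting the values from Lemma \ref{lem:relchern} and Equation \eqref{eq:cz} into the index formula \eqref{eq:index}, and then minimizing over $m\ge 1$ and Morse index $i\in\{0,\ldots,2n\}$ after the Bourgeois perturbation. The paper's proof is stated in a single sentence, while you have written out the substitutions and the minimization explicitly, but there is no difference in content.
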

\begin{proof}
The result follows from a direct application of the index Formula \eqref{eq:index} combined with the above computations of the relative first Chern class and Conley--Zehnder indices. 
\end{proof}
  
\subsection{Conley--Zehnder index of a pure Reeb chord}
\label{sec:relcz}

The next step is to compute the Conley--Zehnder indices for the pure Reeb chords $\mathcal{Q}(\tilde\Lambda_V)_k$ on $\Lambda_V$, both for the round contact form and after a non-degenerate perturbation. The definition of the Conley--Zehnder index for Reeb chords will be recalled in Appendix \ref{sec:puregrading}.

First, note that the Reeb chords on $\RP^n \subset \RP^{2n+1}$ are all null-homotopic when considered as elements in $\pi_1(\RP^{2n+1},\RP^n)$ when $n \ge 1$. This follows since any Reeb chord lifts to a Reeb chord on the standard Legendrian sphere under the universal cover
$$(S^{2n+1},S^{2n+1} \cap \mathfrak{Re} \C^{n+1}) \to (\RP^{2n+1},\RP^{n})$$
where both the map and the restriction $S^{2n+1} \cap \mathfrak{Re} \C^{n+1} \cong S^{n} \to \RP^{n}$ are universal covers.
Moreover, any element in $\pi_2(\RP^{2n+1},\RP^n)$ lifts to an element in $\pi_2(S^{2n+1},S^n)$ under this map, where the latter group vanishes whenever $n>1$. Hence the Maslov class automatically vanishes on $\pi_2(\RP^{2n+1},\RP^n)$ when $n\ge 2$. This is also the case for $n=1$ by a standard calculation.

Contractibility of a Reeb chord is equivalent to the existence of a continuous half-plane
$$u \colon (\HH,\partial \HH) \to (\R \times \RP^{2n+1},\R \times \RP^{n})$$
with boundary condition on the cylinder over the Legendrian $\RP^{n}$ and puncture asymptotic to the Reeb chord. It follows similarly to the case of planes discussed above that two half-planes that share the same asymptotics are homotopic through half-planes of the same type when $n\ge2$; indeed, $\pi_2(S^{2n+1},S^{n})\cong\pi_1(S^{n})=0$ whenever $n \ge 2$. In the case $n=1$, $\pi_2(S^{3},S^{1})\cong \pi_1(S^1)=\Z$ and there are infinitely many homotopy classes of planes asymptotic to any given family of Reeb orbits. The implication is the following.

\begin{lem}
Any Reeb chord $c$ is the asymptotic of a half-plane, and for any two pseudo-holomorphic half-planes $u_1,u_2$ asymptotic to $c,$ $\OP{index}(u_1) = \OP{index}(u_2).$
Here $\OP{index}(u)$ denotes the Fredholm index of a linearization of the $\overline{\partial_J}$-at $u,$ viewed as an element of a standard Banach space of maps.
\end{lem}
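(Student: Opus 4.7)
The plan is to derive both assertions from the topological facts assembled in the preceding paragraphs: namely, null-homotopy of Reeb chords rel endpoints in $(\RP^{2n+1},\tilde\Lambda_V)$, triviality of $\pi_2(S^{2n+1},S^n)$ for $n\ge 2$, and vanishing of the Maslov class on $\pi_2(\RP^{2n+1},\tilde\Lambda_V)$ in the exceptional case $n=1$ (which the excerpt asserts follows by a standard calculation).

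For existence of a half-plane, I would unpack the earlier observation that any Reeb chord $c$ of $\tilde\Lambda_V$ is null-homotopic in $\pi_1(\RP^{2n+1},\tilde\Lambda_V)$. A null-homotopy provides a continuous map $v\colon (D^2,\partial D^2)\to(\RP^{2n+1},\tilde\Lambda_V)$ that sends one boundary arc to $c$ and the complementary arc into $\tilde\Lambda_V$. Composing with the constant-$\tau$ section of the symplectization and then pre-gluing a half-strip $[0,\infty)\times [0,1]$ which carries the standard asymptotic profile of a Reeb-chord puncture produces a continuous (and, after smoothing, smooth) half-plane
\[
u\colon(\HH,\partial\HH)\longrightarrow(\R\times\RP^{2n+1},\R\times\tilde\Lambda_V)
\]
asymptotic to $c$, which serves as the input to the Fredholm theory.

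For the equality of Fredholm indices, given two such half-planes $u_1,u_2$ with common asymptote $c$, I would glue $u_1$ to the orientation-reversed map $\overline{u_2}$ along the puncture to form a continuous disk
\[
D := u_1 \#_c \overline{u_2}\colon (D^2,\partial D^2)\longrightarrow(\R\times\RP^{2n+1},\R\times\tilde\Lambda_V)
\]
representing a class in $\pi_2(\RP^{2n+1},\tilde\Lambda_V)$. The standard Riemann--Roch index formula for a Cauchy--Riemann operator on a punctured half-plane with Lagrangian boundary (recalled in Appendix \ref{sec:puregrading}) depends on $u$ only through the relative first Chern class of $u^*\xi$ with respect to the asymptotic trivialization at $c$, together with the Maslov index of the boundary loop $u|_{\partial\HH}$ in $\tilde\Lambda_V$. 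Additivity under gluing then yields
\[
\OP{index}(u_1)-\OP{index}(u_2)=2c_1^{\xi}[D]+\mu_{\tilde\Lambda_V}[D],
\]
reducing the lemma to showing that both terms on the right vanish on every class in $\pi_2(\RP^{2n+1},\tilde\Lambda_V)$.

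The Chern contribution vanishes because $c_1(\xi_{st})\in H^2(\RP^{2n+1};\Z)\cong \Z/2$ is $2$-torsion (its pullback to the universal cover $S^{2n+1}$ vanishes for dimensional reasons), so $2c_1^{\xi}[D]=0$. For the Maslov term, when $n\ge 2$ the class $[D]$ lifts under the double cover $S^{2n+1}\to\RP^{2n+1}$ to an element of $\pi_2(S^{2n+1},S^n)\cong\pi_1(S^n)=0$, so $[D]$ itself is trivial and the contribution vanishes; when $n=1$, the group $\pi_2(S^3,S^1)\cong\Z$ is nontrivial but the Maslov class vanishes on its generator by the standard calculation alluded to in the excerpt. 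The main obstacle is precisely this borderline case $n=1$: here one cannot dispose of the Maslov term by killing the ambient relative homotopy group, and the vanishing must be checked directly on the generator of $\pi_2(S^3,S^1)$, which is represented by a meridian disk bounding an equatorial great circle and can be handled by an explicit trivialization computation.
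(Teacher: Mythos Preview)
Your argument is essentially the same as the paper's, which presents its reasoning in the paragraphs immediately preceding the lemma rather than in a separate proof block: existence follows from contractibility in $\pi_1(\RP^{2n+1},\RP^n)$, and index-independence from the vanishing of the Maslov class on $\pi_2(\RP^{2n+1},\RP^n)$ (trivial for $n\ge 2$ since the group itself vanishes, and by direct computation when $n=1$). The paper phrases the $n\ge 2$ case as ``the two half-planes are homotopic, hence have equal Fredholm index,'' whereas you route both cases uniformly through the glued disk and the index formula; these are equivalent.

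One small point: in your displayed identity
\[
\OP{index}(u_1)-\OP{index}(u_2)=2c_1^{\xi}[D]+\mu_{\tilde\Lambda_V}[D],
\]
the $2c_1^{\xi}[D]$ term is superfluous. For a disk with Lagrangian boundary the bundle $D^*\xi$ is trivial, and the Maslov index $\mu_{\tilde\Lambda_V}[D]$ already encodes the entire topological contribution; from the formula $\OP{index}(u)=(\OP{CZ}(c)-1)+\mu_{\R\times\Lambda}[u]$ in Appendix~\ref{sec:puregrading} one gets directly $\OP{index}(u_1)-\OP{index}(u_2)=\mu_{\tilde\Lambda_V}[D]$. Your argument that the Chern term vanishes is correct but unnecessary, so this does not affect the validity of the proof.
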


The Reeb chords on the Legendrians $\tilde\Lambda_V$ for the round contact form come in Bott families $\mathcal{Q}(\tilde\Lambda_V)^{\OP{Bott}}_k \cong \RP^{n}$, where the images of these chords coincide with $k$-fold multiples of the simply covered periodic Reeb orbits for $k=1,2,3,\ldots$. The index formula for a pseudo-holomorphic half-plane inside $\R \times \RP^{2n+1}$ with boundary on $\tilde\Lambda_V$ and asymptotics to the chord $c \in \mathcal{Q}(\tilde\Lambda_V)^{\OP{Bott}}_k$ (without a constraint at a fixed asymptotic orbit) is given by Formula \eqref{eq:relindex} in Appendix \ref{sec:puregrading}. Namely,
$$ \OP{index}(u)=(\OP{CZ}(c)-1)+\mu_{\R \times \RP^{n}}[u]$$
where $\OP{CZ}(c)$ is the Conley-Zehnder index of the path of Legendrian planes along $c$
(see Section \ref{sec:relcz}).

In order to compute the Conley--Zehnder index and the relative Maslov class we need to make the choice of a capping path (up to homotopy) as described in Appendix \ref{sec:puregrading}. Since the Reeb flow is totally periodic, we will simply choose the constant capping path.

\begin{lem} For the constant capping path, we have the identity
$$\mu_{\R \times \RP^{n}}[u]=\mu_{\RP^{n}}(\pi \circ u)=k(n+1), \:\: k \ge 1,$$
where $\mu_{\RP^{n}}$ is the classical Maslov index for a disc with boundary on the Lagrangian $\RP^n \subset \CP^n$.

Here we identify  $\pi \circ u$ with a disk in $(\CP^n,\RP^n)$ that is homologous to $kD$, where $D \in H_2(\CP^n,\RP^n)$ is the homology class represented by either of the two hemispheres in a complex line $\CP^1 \hookrightarrow \CP^n$ that intersects $\RP^n$ in an equator.
\end{lem}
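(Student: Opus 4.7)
The plan is to establish the two equalities separately, using the prequantum projection $\pi \colon \RP^{2n+1} \to \CP^n$ to reduce everything to a standard computation for the monotone Lagrangian $\RP^n \subset \CP^n$.

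\textbf{First equality.} For $\mu_{\R \times \RP^n}[u] = \mu_{\RP^n}(\pi \circ u)$, I would exploit that $D\pi|_\xi \colon \xi \to \pi^* T\CP^n$ is a fiberwise symplectic isomorphism and that $\pi$ carries the Legendrian $\RP^n \subset \RP^{2n+1}$ onto the Lagrangian $\RP^n \subset \CP^n$, identifying tangent Legendrian planes with tangent Lagrangian planes. Pulling back a symplectic trivialization of $T\CP^n$ along $(\pi \circ u)|_{\partial \HH}$ through $D\pi|_\xi$ yields a trivialization of $\xi$ along $u|_{\partial \HH}$ in which the path of Legendrian tangents is literally the same path in the Lagrangian Grassmannian as the path of Lagrangian tangents along $(\pi \circ u)|_{\partial \HH}$. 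Since the Reeb flow generates the $S^1$-fibers of $\pi$, the asymptotic chord $c$ projects to a single point $\pi(c) \in \RP^n \subset \CP^n$, and the constant capping path at the endpoints of $c$ corresponds to the trivial (constant) capping at that point. The equality then follows directly from the definition of the Maslov index.

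\textbf{Second equality.} The Lagrangian $\RP^n \subset \CP^n$ is monotone, with $\mu_{\RP^n}(D) = n+1$ for the hemisphere generator $D$ (a standard fact, verifiable e.g.\ via the doubling argument: the double cover of $D$ is a complex line, on which the Maslov index equals $2c_1^{\CP^n}(L) = 2(n+1)$, and hence $\mu_{\RP^n}(D) = n+1$), while the symplectic area of $D$ equals $\pi/2$ under the normalization of Lemma~\ref{lem:relchern}. Monotonicity then gives the proportionality $\mu_{\RP^n}(\bar w) = \frac{2(n+1)}{\pi} \cdot \mathrm{area}(\bar w)$ for any smooth disk $\bar w \colon (D^2,\partial D^2) \to (\CP^n,\RP^n)$. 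Applying this to the disk $\bar u$ obtained by extending $\pi \circ u$ continuously through the asymptotic point $\pi(c)$ (which makes sense precisely because the chord projects to a single point), and using Stokes' theorem together with $\pi^* \omega_{\mathrm{FS}} = d\alpha$ and $\alpha|_{T\RP^n}=0$ to identify its symplectic area with the length $k\pi/2$ of the asymptotic Reeb chord, I obtain $\mu_{\RP^n}(\pi \circ u) = k(n+1)$. The homological identity $[\bar u] = kD$ in $H_2(\CP^n, \RP^n)$ is then a byproduct of the same area comparison, since the area detects the class modulo torsion and the Maslov class vanishes on torsion.

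\textbf{Main obstacle.} The only delicate point is the compatibility of the capping conventions between the relative Legendrian Maslov index on $\RP^{2n+1}$ and the classical Lagrangian Maslov index on $\CP^n$. The prequantum projection's collapse of Reeb chords to single points makes this matching automatic, thereby reducing the first equality to a tautology and leaving only the standard monotonicity computation for the second.
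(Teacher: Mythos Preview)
Your proposal is correct and follows essentially the same approach as the paper: the first equality via the symplectic bundle isomorphism $D\pi|_\xi$ together with the constant capping path, and the second via the area computation (Stokes' theorem identifying the symplectic area with the chord length $k\pi/2$) combined with the known Maslov index of $\RP^n \subset \CP^n$. Your write-up is in fact more explicit than the paper's, which simply cites the Maslov computation as ``well-known'' and the homology class as following from the analogous area argument in Lemma~\ref{lem:relchern}.
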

\begin{proof}
The claim about the homology class of $\pi \circ u$ follows an area consideration similar to the proof of Lemma \ref{lem:relchern}.
  
The first equality is immediate from the choice of capping path, together with the fact that $D\pi|_\xi \colon \xi \to T\CP^N$ is a bundle morphism that is a symplectic isomorphism on the fibers.
  
 The Maslov index computation
$$\mu_{\RP^{n}}(\pi \circ u)=k(n+1)$$
is well-known. (Note that the symplectic area of the projected disc in $(\CP^n,\RP^n)$ is equal to $k\pi/2$, where $\pi$ is the symplectic area of a line.)
\end{proof}

\begin{lem}
  With the above choice of constant capping paths, the Conley--Zehnder index satisfies
\begin{equation}
  \label{eq:relcz}
  \OP{CZ}(c)=n
\end{equation}
for any $c \in \mathcal{Q}(\tilde\Lambda_V)_k^{\OP{Bott}}$ independently of $k=1,2,3,\ldots$.

After a perturbation by a Morse function on the Bott manifold as in \cite{Bourgeois}, the non-degenerate Reeb chord that corresponds to a critical point has Conley--Zehnder index
$$ \OP{CZ}(c)+i-n=i$$
where $i \in \left\{0,1,\ldots,\dim  \mathcal{Q}(\tilde\Lambda_V)_k^{\OP{Bott}}=n\right\}$ is the Morse index of the critical point.
\end{lem}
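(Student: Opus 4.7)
The plan is to mirror the closed orbit computation of Equation \eqref{eq:cz}, exploiting the observation that the constant capping path reduces the chord problem to essentially the same linear-algebraic calculation as for periodic orbits. First I would note that each chord $c \in \mathcal{Q}(\tilde\Lambda_V)_k^{\OP{Bott}}$ has coinciding start- and endpoints, since $e^{ik\pi}p = (-1)^kp$ represents the same class $[p] \in \RP^{2n+1}$. This makes the constant capping path well-defined, and the induced symplectic trivialization of $\xi$ along $c$ can be taken to agree with the prequantization trivialization used for closed orbits: both are obtained by pulling back a symplectic frame at $T_{\pi([p])}\CP^n$ via the fiberwise symplectic isomorphism $D\pi|_\xi$.

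In this trivialization the linearized Reeb flow along $c$ is the identity, since the Reeb flow on $\RP^{2n+1}$ covers the trivial flow on the base $\CP^n$. Hence the path of Lagrangians $\Psi(t) T_{c(0)}\tilde\Lambda_V$ is constantly equal to the real Lagrangian $T_{c(0)}\tilde\Lambda_V \subset \xi_{c(0)}$, which is the maximally Bott-degenerate situation. Perturbing the flow by a small positive rotation $s \mapsto e^{i\delta s}\cdot \id$ in the contact planes, exactly as in the closed-orbit case, yields a perturbed symplectic path $\Psi_\delta(t) = e^{i\delta t}\id$ identical to the one computed there. Since the Robbin--Salamon Conley--Zehnder index depends only on the symplectic path and the reference Lagrangian structure (both of which match), the identical calculation gives $\OP{CZ}(c) = \tfrac{1}{2}(2n) = n$, independently of the multiplicity $k$, establishing the first claim.

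For the statement concerning the non-degenerate perturbation by a Morse function on the Bott manifold $\mathcal{Q}(\tilde\Lambda_V)^{\OP{Bott}}_k \cong \RP^n$, I would invoke the relative analogue of the Bott--Morse formula used in the preceding lemma: a critical point of Morse index $i$ yields a non-degenerate Reeb chord whose Conley--Zehnder index is $\OP{CZ}(c) + i - \dim \mathcal{Q}(\tilde\Lambda_V)^{\OP{Bott}}_k = n + i - n = i$. This follows from a direct adaptation of Bourgeois's local perturbation analysis \cite{Bourgeois} from periodic orbits to the Legendrian chord setting, using the Bott manifold of chords (rather than of closed orbits) as the parameter space for the Morse perturbation. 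The hardest step will be justifying that the relative Bott--Morse formula comes with the correct dimensional shift $i - \dim \mathcal{Q}^{\OP{Bott}}_k$ in the Legendrian case; the argument parallels the closed-orbit computation, with paths of Lagrangian subspaces replacing paths of linear symplectomorphisms, but the local linearization near a Bott-degenerate chord is essentially the same, so no new analytical input should be required.
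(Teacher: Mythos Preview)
Your proposal is correct and follows essentially the same approach as the paper: both observe that the chord's start and endpoints coincide so the linearized Reeb flow is the identity in the constant-capping trivialization, perturb by the small positive rotation $e^{i\delta s}\id$, and read off $\OP{CZ}(c)=n$; the Morse-perturbed statement is then the direct application of the relative Bott--Morse shift formula \eqref{eq:relczmorse}. Your write-up supplies somewhat more detail (e.g.\ the explicit identification of the trivialization via $D\pi|_\xi$) than the paper's terse proof, but there is no substantive difference in strategy.
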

\begin{proof}
  This is similar to the computation of Equation \eqref{eq:cz} in the periodic case. More precisely, the Reeb flow on $\RP^{2n+1}$ is totally periodic, and the starting point and end point of all Reeb chords on $\RP^n$ coincide. The return map of the Reeb flow is the identity, and we make it non-degenerate by performing a small positive rotation $s \mapsto e^{i\delta s}\id$ in the contact planes. Finally, the result is obtained by computing the standard Conley--Zehnder index of this non-degenerate path.
    \end{proof}

\begin{prop}
  \label{prop:halfplanedegree}
For a half-plane $u$ as above with boundary on $\R \times \tilde\Lambda_V$, and boundary puncture asymptotic to a Reeb orbit in the family $\mathcal{Q}(\tilde\Lambda_V)_k^{\OP{Bott}}$, $k \ge 1$, for the round contact form, we have the identity
$$\OP{index}(u)=(n-1)+k(n+1)=(1+k)(n+1)-2, \:\: k \ge 1,$$
for the expected dimension of the moduli space of unparameterized pseudo-holomorphic half-planes of the same type (with asymptotics that are free to vary in the Bott family $Q(\tilde\Lambda_V)_k^{\OP{Bott}}$).

Moreover, after a small perturbation of the contact form by a Morse function defined on the Bott manifolds as in \cite{Bourgeois}, all Reeb chords with boundary on $\tilde\Lambda_V$ may be assumed to be non-degenerate, and to satisfy the bound
$$|c|\ge 2(n+1)-2-n=n$$
on their degrees.
\end{prop}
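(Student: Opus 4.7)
The proof is essentially a bookkeeping exercise combining the index formula \eqref{eq:relindex} with the Conley--Zehnder and relative Maslov computations just established in Section \ref{sec:relcz}. My plan is as follows.

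First, for the Bott case, I would simply invoke the index formula for a pseudo-holomorphic half-plane with Lagrangian boundary on $\R \times \tilde\Lambda_V$ and a single boundary puncture at a Reeb chord $c \in \mathcal{Q}(\tilde\Lambda_V)_k^{\OP{Bott}}$, namely
\[
\OP{index}(u) = \bigl(\OP{CZ}(c) - 1\bigr) + \mu_{\R \times \RP^n}[u],
\]
as recalled just before Equation \eqref{eq:relindex}. Substituting the Conley--Zehnder identity \eqref{eq:relcz}, i.e.\ $\OP{CZ}(c) = n$ with respect to the constant capping path, and the Maslov computation $\mu_{\R \times \RP^n}[u] = k(n+1)$ from the preceding lemma, yields
\[
\OP{index}(u) = (n - 1) + k(n+1) = (1+k)(n+1) - 2,
\]
which is the first claim. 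Here I would also briefly remark that, because $\pi_2(\RP^{2n+1}, \RP^n)$ vanishes for $n \ge 2$ (and the Maslov class is null on $\pi_2(\RP^3, \RP^1)$ for $n=1$), the index does not depend on the choice of half-plane representative within its asymptotic class, so the formula genuinely computes the expected dimension of the unparameterized moduli space with asymptotic free in the Bott family.

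For the non-degenerate statement, I would apply the Bott--Morse perturbation of Bourgeois as in Section \ref{sec:cz}: a Morse function on the Bott manifold $\mathcal{Q}(\tilde\Lambda_V)_k^{\OP{Bott}} \cong \RP^n$ makes the contact form non-degenerate near the chords, and a critical point of Morse index $i \in \{0,1,\dots,n\}$ corresponds to a non-degenerate Reeb chord whose Conley--Zehnder index becomes $\OP{CZ}(c) + i - n = n + i - n = i$, by the same index-shift formula already used in the periodic case (Formula \eqref{eq:czmorse}). Plugging this into the index formula gives $\OP{index}(u) = (i-1) + k(n+1)$ for each $k \ge 1$ and each $i \in \{0,\dots,n\}$, and minimizing over these two parameters yields the bound $|c| \ge (0-1) + 1\cdot(n+1) = n$, attained for $k=1$ and a Morse-index-zero critical point.

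The only subtle step, and the one I would expect to require the most care, is ensuring that the Conley--Zehnder and Maslov contributions are computed with respect to compatible trivializations and the same choice of capping path. Since I have already chosen the \emph{constant} capping path (which is well-defined here because the Reeb flow of the round form is totally periodic and fixes the starting and endpoints of every pure chord on $\tilde\Lambda_V$), both contributions are unambiguous. The Bott--Morse index shift $+i$ arises from linearizing the Bourgeois perturbation of the Reeb flow in the direction of the critical point, and I would reference the analogous argument given for the periodic orbits above to avoid duplicating the computation. This completes the proof sketch.
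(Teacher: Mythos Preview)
Your proposal is correct and follows exactly the paper's approach: the paper's proof is a single line stating that it suffices to combine the index formula \eqref{eq:relindex} with the computations from the preceding lemmas, and you have carried out precisely that combination in detail. One minor remark: for the Morse--Bott shift in the Reeb chord case you could cite \eqref{eq:relczmorse} rather than \eqref{eq:czmorse}, since the latter is stated for periodic orbits, though the content is the same and the relevant lemma already records the shifted index $\OP{CZ}(c)+i-n=i$ explicitly.
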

\begin{proof}    
It suffices to use the index Formula \eqref{eq:relindex} with the computations from the above lemmas.
\end{proof}

\subsection{Computing the DGA and the Rabinowitz complex}

First we show that the Chekanov--Eliashberg algebra of $\RP^n \subset \RP^{2n+1}$ has a (canonically defined) augmentation. Of course, here it is important that the Chekanov--Eliashberg algebra is well-defined in the first place, i.e., no-Gromov bubbling not captured in the algebra. This non-bubbling follows from Proposition \ref{prop:planedegree} above, which bounds the degree of a contractible periodic Reeb orbits from below by $2n$.
\begin{prop}
\label{prop:aug}
Consider a small Morse perturbation of the round contact form to a non-degenerate one as described above. The DGA of $\tilde\Lambda_V$ has an augmentation in $\Z_2$ that sends all Reeb chord generators to $0$.
\end{prop}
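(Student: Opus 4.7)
The plan is to check directly that sending every Reeb chord to zero defines a DGA augmentation. The map $\varepsilon$ extends uniquely to a unital algebra homomorphism $\mathcal{A}(\tilde\Lambda_V)\to\Z_2$ vanishing on every word of positive length, so the condition $\varepsilon\partial=0$ reduces to showing that the constant term of $\partial c$ vanishes for each chord $c$. Such a constant term counts (modulo $\R$-translation) rigid pseudo-holomorphic half-planes in $\R\times\RP^{2n+1}$ with boundary on $\R\times\tilde\Lambda_V$, one positive puncture at $c$, and no other punctures. As a first reduction, Proposition \ref{prop:planedegree} gives $|\gamma|\ge 2n\ge 2>1$ for every contractible Reeb orbit, which by the discussion following Definition \ref{acs} is precisely the hypothesis that allows $\partial$ to be defined using chord-only disks; so only the disks above need to be ruled out.

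The main step will be a Fredholm index count. For a half-plane $u$ of the excluded type, asymptotic to a perturbed chord corresponding to a Morse critical point of index $i\in\{0,\ldots,n\}$ in the family $\mathcal{Q}(\tilde\Lambda_V)_k^{\OP{Bott}}$, the index formula from Section \ref{sec:relcz} gives
\[
\OP{index}(u)=(\OP{CZ}(c)-1)+\mu(u)=(i-1)+k(n+1),
\]
where the Maslov contribution $\mu(u)=k(n+1)$ with $k\ge 1$ is pinned down by the Maslov computation of Section \ref{sec:relcz} together with Stokes' identity $\int_u d\alpha=\ell(c)>0$ and monotonicity of $\RP^n\subset\CP^n$. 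Rigidity after the $\R$-quotient of the symplectization requires $\OP{index}(u)=1$, i.e.~$k(n+1)+i=2$. For $n\ge 2$ the left-hand side is at least $n+1\ge 3$, so there is no solution and no rigid half-plane.

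The case $n=1$ is more delicate, since then $(k,i)=(1,0)$ produces $\OP{index}(u)=1$ and a potential rigid disk asymptotic to the shortest Morse-index-$0$ chord is not excluded by dimension alone. The plan here is to invoke the $\Z_2$-cover $S^3\to\RP^3$: any such rigid disk lifts to a pair of disks in $\R\times S^3$ swapped by the deck transformation, and via the Hopf projection these two lifts identify with the two hemispheres of $\CP^1=S^2$ bounded by the equator $\RP^1$. The resulting mod-$2$ count vanishes, so $\partial c$ has no constant term in this case either. This parity argument in the $n=1$ case is the most delicate point of the proof; granted it, the strategy yields a $\Z_2$-augmentation uniformly for every $n\ge 1$.
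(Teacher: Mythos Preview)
Your argument for $n\ge 2$ is correct and matches the paper's: both reduce to the degree bound $|c|\ge n$ from Proposition~\ref{prop:halfplanedegree}, which for $n\ge 2$ rules out any chord of degree one and hence any constant term in $\partial$.

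The $n=1$ argument contains an actual error, not just a delicate step. The deck transformation of the cover $S^3\to\RP^3$ is the antipodal map $z\mapsto -z$, and the Hopf projection $S^3\to\CP^1$ already factors through this identification (since $-1\in S^1$). Consequently the two lifts $\tilde u$ and $\sigma\circ\tilde u$ of a downstairs half-plane have the \emph{same} image in $\CP^1$, not the two different hemispheres as you claim. More structurally, the lifting step only shows that the upstairs count equals twice the downstairs count; this carries no information about the parity of the downstairs count. A parity argument would require a fixed-point-free involution on the moduli space in $\R\times\RP^3$ itself, compatible with the chord $c$ and the almost complex structure, and you have not produced one.

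The paper handles $n=1$ by a different route that avoids any explicit curve count. It uses the identification $\RP^3\cong UT^*S^2$, under which $\tilde\Lambda_V$ is Legendrian isotopic to the unit conormal of a point $q_0\in S^2$. The cotangent fiber over $q_0$ is then an exact Lagrangian filling of $\tilde\Lambda_V$ inside the Liouville filling $T^*S^2$, and by the cobordism functoriality of \cite{Ekholm} any exact Lagrangian filling induces an augmentation of the Chekanov--Eliashberg algebra. The mere existence of \emph{some} augmentation forces the constant term of $\partial c$ to vanish for the unique degree-one chord, and hence the trivial augmentation exists as well.
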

\begin{proof}
In the case 
$n>{1}$ 
this follows directly from the degree computation in Proposition \ref{prop:halfplanedegree}; since all chords have degree strictly greater than one, the differential of the DGA has no constant terms.

In the case $n=1$ we cannot argue merely by considerations of the degree. In this case there is a single Reeb chord $c$ in degree $|c|=1$, while all other chords have degree at least $2$. There are thus two possibilities when coefficients in $\Z_2$ are used: either 
$\partial c=1,$
and there are no augmentations, or 
$\partial c=0,$ 
and there is a unique ``trivial" augmentation that sends all chords to zero. We will see that the latter case holds.

Since $\RP^3=UT^*S^2
=\{(q,p)\in T^*S^2\,\,|\,\,\|p\|=1\}
$ 
and $\tilde\Lambda_V$ is Legendrian isotopic to the conormal lift 
$\{(q_0,p) \,\,|\,\,\|p\|=1\}$ 
of a point 
$
q_0 \in
S^2$, 
the Legendrian $\tilde\Lambda_V$ 
admits an exact Lagrangian filling isotopic to
$\{(q_0, p)\} \subset T^*S^2$
inside the exact symplectic filling 
$T^*S^2$  
of the contact manifold $UT^*S^2 = \RP^{2n+1}$
when $n=1.$ 
Hence, in this case, the Chekanov--Eliashberg algebra admits augmentations that are geometrically induced by the fillings by the functorial properties of SFT proven in \cite{Ekholm}. 
(Recall that by functoriality, a Lagrangian filling induces a DGA-morphism from the Chekanov--Eliashberg algebra of the Legendrian at the positive end to the empty-set Legendrian, whose DGA is the ground ring with trivial differential. Such a morphism is by definition an augmentation.)
In view of the above degree consideration, the existence of the augmentation implies that $\partial$ has no constant terms. Hence, there is a (trivial) augmentation.
\end{proof}

The mixed Reeb chords that start on $\tilde\Lambda_0$ and end on $\tilde\Lambda_V$ and which are of length $t \ge 0$ can be parametrized by their starting points
$$(\Lambda_0 \cap e^{-i2t}\cdot V)/ \Z_2 \subset \tilde\Lambda_0.$$
The mixed chords from $\tilde\Lambda_0$ to $\tilde\Lambda_V$ are non-degenerate if and only if all K\"{a}hler angles between the two subspaces are pairwise distinct. 
In any case, as for the pure chords, any mixed chord $\gamma$ can be concatenated by any closed Reeb orbit of period $k\pi/2$, in order to form a new mixed Reeb chord of length $\ell(\gamma)+k\pi/2$.

We continue to direct our attention to the following particular family of Lagrangian subspaces
$$V_s \coloneqq \langle e^{is\pi/(n+1)-s\epsilon}\cdot\partial_{x_1},e^{is2\pi/(n+1)-s\epsilon}\cdot\partial_{x_2},\ldots,e^{is(n+1)\pi/(n+1)-s\epsilon}\cdot\partial_{x_{n+1}}\rangle_\R \subset \C^{n+1},$$
for $\epsilon>0$ sufficiently small, and the induced one-parameter family $\tilde\Lambda_s \coloneqq \tilde\Lambda_{V_s} \subset \RP^{2n+1}$ of Legendrians.

\begin{lem}
\label{lem:mixeddegrees}
For a suitable choice of Maslov potentials, the complex $RFC_*(\tilde\Lambda_0,\tilde\Lambda_1)$ is generated by the mixed chords $c^k_j$ with $k \in \Z$ and $j \in \{1,2,\ldots,n,n+1\}$ that all are non-degenerate, and whose gradings are given by $|c^k_j|=j+k(n+1)-1$. In addition
\begin{itemize}
\item  The chords $c^k_j$ with $k=0,1,2,\ldots$ start on $\tilde\Lambda_0$, end on $\tilde\Lambda_1$, and are of the form
\begin{gather*}
c^k_j(t) \coloneqq e^{i2t}\cdot \partial_{x_j},\:\:t \in [0,(1/2)(\pi(j/(n+1)+k)-\epsilon)],\\
j=1,\ldots,n+1, \: k=0,1,2,3,\ldots.
\end{gather*}
\item  The chords $c^k_j$ with $k=-1,-2,-3,\ldots$ start on $\tilde\Lambda_1$, end on $\tilde\Lambda_0$, and are of the form
\begin{gather*}
c^k_j(t) \coloneqq e^{i2t}\cdot e^{i(j\pi/(n+1)-\epsilon)}\partial_{x_j},\:\:t \in [0,(1/2)(\pi(-j/(n+1)-k)+\epsilon)],\\
j=1,\ldots,n+1, \: k=-1,-2,-3,\ldots.
\end{gather*}
\item Their actions satisfy
$$\mathfrak{a}(c^{k_1}_{j_1}) < \mathfrak{a}(c^{k_2}_{j_2}) $$
if and only if $(k_1,j_1) < (k_2,j_2)$ with respect to the lexicographic order.
\end{itemize}
\end{lem}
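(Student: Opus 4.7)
The plan is to verify the four assertions of the lemma --- enumeration of mixed chords, non-degeneracy, grading, and action ordering --- in that order. Throughout, I use that the Reeb flow on the round $\RP^{2n+1}$ is globally given by $\phi^t_R(z)=e^{i2t}z$, and that the small Morse perturbation from Proposition \ref{prop:halfplanedegree} only slightly displaces the mixed chords without affecting their gradings.

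First, a mixed chord from $\tilde\Lambda_0$ to $\tilde\Lambda_1$ of length $t>0$ lifts to a pair $(v,e^{i2t}v)\in (S^{2n+1}\cap\Re\C^{n+1})\times (S^{2n+1}\cap V_1)$. Writing $v$ in the standard real basis of $\Re\C^{n+1}$ and using that $V_1$ is the $\R$-span of the complex rotations $e^{i(j\pi/(n+1)-\epsilon)}\partial_{x_j}$, a direct calculation shows that $e^{i2t}v\in V_1$ forces $v=\pm\partial_{x_j}$ for a unique $j$ and $2t\equiv j\pi/(n+1)-\epsilon\pmod{\pi}$. These are exactly the chords $c^k_j$ with $k\ge 0$; reversing the roles of the Legendrians produces those with $k<0$. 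For $\epsilon>0$ small, the $n+1$ angles $j\pi/(n+1)-\epsilon\bmod{\pi}$ are pairwise distinct, so $\Lambda_0$ meets $e^{-i2t}V_1$ transversely at each chord moment, which is equivalent to non-degeneracy. Reading off the lengths with the appropriate sign for $\mathfrak{a}$ yields the action ordering: the $\pi$-shifts indexed by $k$ dominate the $\pi/(n+1)$-shifts indexed by $j$ once $\epsilon$ is sufficiently small, so $\mathfrak{a}(c^k_j)$ is strictly increasing in the lexicographic order on $(k,j)$.

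For the grading formula $|c^k_j|=j+k(n+1)-1$, I fix Maslov potentials on $\tilde\Lambda_0$ and $\tilde\Lambda_1$ that are transported into one another along the small Legendrian isotopy $\{\tilde\Lambda_s\}_{s\in[0,1]}$, as allowed by Appendix \ref{sec:maslovpotential}. Using the index formula \eqref{eq:relindex} and the argument of Proposition \ref{prop:halfplanedegree}, the Conley--Zehnder index of a constant-capping half-disc at the shortest chord $c^0_1$ is computed explicitly; with the above calibration of potentials it equals $0$. For general $(k,j)$, a capping half-disc is built from one at $c^0_1$ by concatenating $k$ full Reeb periods --- each contributing $n+1$ to the relative Maslov class via the Maslov index of $D\in H_2(\CP^n,\RP^n)$ as computed in the proof of Proposition \ref{prop:halfplanedegree} --- together with an interpolation among the eigen-directions $1,\dots,j$ of the Bott family perturbed by the Morse function, contributing $j-1$ via the standard Conley--Zehnder shift formula for Morse--Bott perturbations. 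Adding these contributions to the base value gives the stated formula.

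The main obstacle is the careful calibration of Maslov potentials. Because $\pi_1(\RP^{2n+1})=\Z/2$ can act non-trivially on the set of Maslov potentials (cf.\ Lemma \ref{lem:actionshift}), the base value $|c^0_1|=0$ is pinned down only after fixing one specific potential at $s=0$ and transporting it rigidly across the family $\{\tilde\Lambda_s\}$. Once that choice is made, the propagation across $k$ and $j$ follows mechanically from Lemma \ref{lem:relchern}, the Maslov index computation for $D$, and the Morse--Bott shift formula already established in Sections \ref{sec:cz} and \ref{sec:relcz}.
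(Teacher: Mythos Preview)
Your enumeration of the mixed chords, their non-degeneracy, and the action ordering are correct and match the paper's implicit reasoning. The overall strategy for the grading is also the same as the paper's, but there is a systematic confusion between the pure-chord and mixed-chord index machinery that leaves a gap.

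Formula \eqref{eq:relindex} and Proposition \ref{prop:halfplanedegree} concern \emph{pure} Reeb chords on a single Legendrian $\tilde\Lambda_V$, graded via half-planes with boundary on $\R\times\tilde\Lambda_V$. The mixed chords $c^k_j$ are graded instead by the Conley--Zehnder index relative to Maslov potentials on \emph{two} Legendrians, as in Appendix \ref{sec:maslovpotential}; there is no half-plane with a single Lagrangian boundary condition to which \eqref{eq:relindex} applies. Likewise, your ``Bott family perturbed by the Morse function'' is borrowed from the pure-chord setting: at $s=1$ the mixed chords between $\tilde\Lambda_0$ and $\tilde\Lambda_1$ are already non-degenerate, so there is no Bott family of mixed chords to which the Morse--Bott shift formula \eqref{eq:relczmorse} applies directly.

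The paper closes this by passing to small $s=\delta$, where $\tilde\Lambda_\delta$ is genuinely a Morse perturbation of the small Reeb push-off of $\tilde\Lambda_0$; there the mixed chords are in bijection with critical points of a perfect Morse function on $\RP^n$, and the standard result (e.g.\ \cite[Lemma 2.9(2)]{GDRFamilies}) gives $|c^0_j|=j-1$ directly. The $k$-shift is then computed not via the disc class $D\in H_2(\CP^n,\RP^n)$ but via the relative first Chern number $c^\xi_{1,\OP{rel}}[u]=k(n+1)$ of a \emph{plane} asymptotic to the $k$-fold periodic orbit through the common start and endpoint of $c^0_j$ and $c^k_j$; this is precisely how the last paragraph of Appendix \ref{sec:maslovpotential} prescribes the Maslov-potential shift along a loop. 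Continuity of the grading along the transverse family $s\in[\delta,1]$ then transports the result to $\tilde\Lambda_1$. Your numerical contributions $j-1$ and $k(n+1)$ are the right ones, but the justifications you cite belong to the wrong setting and should be replaced by the mixed-chord framework of Appendix \ref{sec:maslovpotential} together with the small-$s$ Morse push-off picture.
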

\begin{proof}
Consider the family $\tilde\Lambda_s$ of Legendrian push-offs. It can be seen that $\tilde\Lambda_\delta$
for $\delta>0$ sufficiently small is obtained by the perturbation of the image of $\tilde\Lambda_0$ under a small positive Reeb flow by a perfect Morse function $\RP^{n} \to \R$. Recall that such a Morse function has precisely $n+1$ non-degenerate critical points, one in each degree $0,1,2,\ldots,n$.

We proceed to perform the computation of the grading of the mixed chords, as defined in Appendix \ref{sec:maslovpotential}. First, note that the chords $c^k_j$ correspond to the critical point of the above Morse function with Morse index $j-1$. For a suitable choice of Maslov potentials, a standard computation thus gives us $|c^0_j|=j-1$; see e.g.~\cite[Lemma 2.9(2)]{GDRFamilies}.

The chords $c^k_j$ with $k > 0$ have the same start and endpoints as $c^0_j$. By Section \ref{sec:maslovpotential} the difference in Maslov potential can be computed by the relative Chern number $c^{\xi}_{1,\OP{rel}}[u]$ of a plane $u \colon \C \to \R \times \RP^{2n+1}$ whose asymptotic is the $k$-fold cover of the simple periodic Reeb orbit that contains $c^0_j$, relative the trivialization that is constant under the periodic Reeb flow. We compute this index to be
$$c^{\xi}_{1,\OP{rel}}[u]=c^{\CP^{n}}_1[\pi \circ u]=k(n+1),$$
where the right hand side is the first Chern class in $\CP^n$ of the $k$-fold multiple of the generator of $H_1(\CP^n)$. The computation of $|c^k_j|$ for all $k\ge 0$ follows from this.

We leave the the computation of the degree of the chords $c^k_j$ with $k< 0$ to the reader, since it follows by analogous computations (although the order of the Legendrians have been switched).

To deduce the statement for $\tilde\Lambda_1$ from the statement for $\tilde\Lambda_\delta$, it suffices to use the continuity of the degrees of the chords. This, in turn, holds since all chords remain transverse as the parameter $s  \in [\delta,1]$ varies.
\end{proof}

\begin{lem}
  \label{lem:actionshift}
There exists a \underline{contact-form preserving} isotopy $\phi^t$ of the round projective space $(\RP^{2n+1},\alpha_{st})$ for which
\begin{itemize}
\item $\phi^t$ acts on $\tilde\Lambda_0$ by the Reeb flow and a reparameterization, more precisely $\phi^t(\tilde\Lambda_0)=e^{t\pi/(n+1)}\tilde\Lambda_0$ for all $t \in [0,1]$;
\item in addition, $\phi^1$ fixes $\tilde\Lambda_1$ set-wise, i.e.~$\phi^1(\tilde\Lambda_1)=\tilde\Lambda_1$;
\item the mixed Reeb chords with one endpoint on $\tilde\Lambda_0$ and one endpoint on $\phi^t(\tilde\Lambda_1)$ remain non-degenerate for all $t \in [0,1]$, in particular there are no birth/deaths of Reeb chords during this isotopy (note that mixed chords can shrink to a zero-length chord and change direction in the path, which corresponds to the moments when $\tilde\Lambda_0 \cap \tilde\Lambda_1 \neq \emptyset$); and
\item the path of non-degenerate mixed Reeb chords parametrized by $t \in [0,1]$ that is induced by the above isotopy connects the chord $c^k_j$ (at $t=0$) with the chord $c^k_{j+1}$ if $j<n+1$ and $c^{k+1}_1$ if $j=n+1$ (at $t=1$).
\end{itemize}
\end{lem}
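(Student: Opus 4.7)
The plan is to construct $\phi^t$ as a path in the unitary group $U(n+1)$ acting on $\C^{n+1}$, descending to $\RP^{2n+1} = S^{2n+1}/\Z_2$. Unitary matrices preserve both the standard contact form $\alpha_{st}$ on $S^{2n+1}$ and the antipodal action, so any such $U$ induces a strict contactomorphism of $(\RP^{2n+1},\alpha_{st})$; it thus suffices to produce a smooth path in $U(n+1)$ with the required properties. The key algebraic observation is that the cyclic permutation $P \in O(n+1)$ defined by $P\partial_{x_j} = \partial_{x_{j+1 \bmod (n+1)}}$ satisfies $PV_1 = e^{-i\pi/(n+1)} V_1$, so the unitary $W = e^{i\pi/(n+1)} P$ preserves $V_1$ set-wise, while simultaneously $W \R^{n+1} = e^{i\pi/(n+1)} \R^{n+1}$ since $P \in O(n+1)$.

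Concretely, I would set $\phi^t = e^{it\pi/(n+1)} Q^t$ where $Q^t$ is a smooth path in $SO(n+1)$ from $\id$ to $Q^1 = PD$, with $D \in O(n+1)$ a diagonal sign matrix chosen so that $\det(PD) = +1$. Since $\det P = (-1)^n$, an appropriate $D$ exists for every $n$; and because the sign-flips in $D$ leave each real Lagrangian line $\R \cdot e^{i\theta}\partial_{x_j}$ invariant set-wise, $Q^1$ has the same effect on $V_1$ as $P$ alone. Path-connectedness of $SO(n+1)$ guarantees the existence of $Q^t$. The first bullet follows immediately because $Q^t$ preserves $\R^{n+1}$ set-wise, so $\phi^t(\tilde\Lambda_0) = e^{it\pi/(n+1)} \tilde\Lambda_0$. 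The second bullet follows from the direct computation $e^{i\pi/(n+1)} Q^1 V_1 = e^{i\pi/(n+1)} P V_1 = V_1$, where the identity $PV_1 = e^{-i\pi/(n+1)} V_1$ is established by writing the basis of $PV_1$ in the standard frame and observing a uniform shift of all axis angles by $-\pi/(n+1)$.

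For the chord analysis (third and fourth bullets), I would enumerate the mixed chords of $(\tilde\Lambda_0, \phi^t(\tilde\Lambda_1))$ directly. A Reeb orbit starting at $[v] \in \tilde\Lambda_0$ with $v \in \R^{n+1}$ hits $\phi^t(\tilde\Lambda_1)$ only when the reality constraints on $e^{i(2s - (t+m)\pi/(n+1) + \epsilon)}$ can be simultaneously satisfied, which forces $(Q^t)^{-1} v$ to be a single coordinate direction $\partial_{x_m}$; the resulting chord then has signed length $\tfrac{1}{2}((m+t)\pi/(n+1) - \epsilon + k\pi)$ for each $k \in \Z$. Since $(m_1 - m_2)/(n+1)$ is never a nonzero integer for $m_i \in \{1,\ldots,n+1\}$, distinct labels $(m,k)$ always give distinct lengths throughout $t \in [0,1]$, ensuring non-degeneracy except at isolated zero-length moments that are precisely the allowed intersection events. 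The continuous family of chords labelled $(m,k)$ connects the chord starting at $[\partial_{x_m}]$ at $t=0$ to the chord starting at $[Q^1 \partial_{x_m}] = [\partial_{x_{m+1 \bmod (n+1)}}]$ at $t=1$, with lengths matching $c^k_{m+1}$ for $m < n+1$ and $c^{k+1}_1$ for $m = n+1$ via the identity $(n+2)\pi/(n+1) - \epsilon + k\pi = \pi/(n+1) - \epsilon + (k+1)\pi$. The main subtlety to verify carefully is the tracking of chord labels through the zero-length moments where a chord reverses direction, such as $c^{-1}_{n+1}$ passing through length zero at $t^* = (n+1)\epsilon/\pi$ and re-emerging as $c^0_1$ at $t = 1$.
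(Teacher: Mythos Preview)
Your proposal is correct and follows essentially the same approach as the paper: both construct $\phi^t$ as the composition of a path $Q^t$ in $SO(n+1)$ ending at a signed cyclic permutation with the Reeb-flow factor $e^{it\pi/(n+1)}$, using the identity $PV_1 = e^{-i\pi/(n+1)}V_1$ to verify that $\phi^1$ fixes $\tilde\Lambda_1$. Your treatment is in fact more explicit than the paper's on the last two bullets---the paper simply asserts that they follow from the first two, whereas you track the chord lengths $(1/2)((m+t)\pi/(n+1)-\epsilon+k\pi)$ and their endpoints directly; one minor imprecision is that ``distinct lengths'' is not literally the definition of non-degeneracy, but for these linear Lagrangians it is equivalent to distinctness of the K\"ahler angles, which is what your computation actually establishes.
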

\begin{proof}
It suffices to consider the action on $\C^n$ of a path of real matrices in $SO(n+1) \subset SU(n+1) \subset Gl(n+1,\C)$ that starts with the identity matrix, and ends at a matrix in $SO(n+1)$ that represents a linear map of the form
$$ \partial_{x_j} \mapsto \pm \partial_{x_{j+1}}, \:\: j=1,2,\ldots,n+1,$$
on the standard basis of $\C^{n+1}$ (here we write $\partial_{x_{n+2}}=\partial_{x_1}$). In other words, the latter matrix performs as permutation of the real coordinate lines.
The new Legendrian now corresponds to the subspace
\begin{eqnarray*}
  \lefteqn{\C^{n+1} \supset V' \coloneqq} \\
& &
    \langle \pm e^{i(n+1)\pi/(n+1)-s\epsilon}\cdot\partial_{x_1},\pm e^{i\pi/(n+1)-s\epsilon}\cdot\partial_{x_2},\ldots,\pm e^{in\pi/(n+1)-s\epsilon}\cdot\partial_{x_{n+1}}\rangle_\R
    \end{eqnarray*}
Note that this contact isotopy fixes $\tilde\Lambda_0$ set-wise for all $t \in [0,1]$, but that the time-1 map does not fix $\tilde\Lambda_1$. It is finally a simple matter to apply the time-$\pi/(2(n+1))$ Reeb flow in order for the image of $\tilde\Lambda_1$ to become fixed set-wise under the time-1 map.

This proves the first two bullet points, while the third and fourth follow from the first two.
\end{proof}

\begin{prop}
\label{prop:RCF}
There exist arbitrarily small perturbations of  the round contact form on $\RP^{2n+1}$, $n\ge 1$, to a non-degenerate contact form, for which the minimal degree of a contractible Reeb orbit is $|\gamma| \ge 2n$. Moreover,
\begin{enumerate}
\item the Chekanov--Eliashberg algebra the standard Legendrian $\RP^n$, which is well-defined and invariant under Legendrian isotopy by the above, admits an augmentation; and
\item the Rabinowitz Floer complex $RFC_*(\tilde\Lambda_0,\tilde\Lambda_1)$  for the above pair of standard $\RP^n$'s, which thus also is well-defined and invariant under Legendrian isotopy, can be assumed to have underlying graded vector space of the form
$$RFC_*(\tilde\Lambda_0,\tilde\Lambda_1)=\bigoplus_{i \in \Z} \Z_2 \cdot c_i=\bigoplus_{i \in \Z} \Z_2[i],$$
where $|c_i|=i \in \Z$ and $\mathfrak{a}(c_i) < \mathfrak{a}(c_{i+1})$, and with a differential that vanishes.
\end{enumerate}
\end{prop}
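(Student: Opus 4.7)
The plan is to combine the Morse perturbation of the round contact form and the structural results from Lemmas \ref{lem:mixeddegrees} and \ref{lem:actionshift} with a cyclic symmetry argument to force the differential to vanish.

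For the opening statement and part (1), I would apply Proposition \ref{prop:planedegree} to obtain a non-degenerate Morse perturbation of the round contact form with $|\gamma|\ge 2n\ge 2>1$ for every contractible periodic Reeb orbit $\gamma$. By Remark \ref{rem:ClosedOrbits} this makes the Chekanov--Eliashberg algebra well-defined and invariant under Legendrian isotopy, and Proposition \ref{prop:aug} supplies the required augmentation. Applying Theorem \ref{thm:pwc} with $l=+\infty$ and $a_t=-\infty$, $b_t=+\infty$ then yields a well-defined and Legendrian-isotopy-invariant Rabinowitz Floer complex $RFC_*(\tilde\Lambda_0,\tilde\Lambda_1)$. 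Lemma \ref{lem:mixeddegrees} identifies its generators as $\{c^k_j\}_{k\in\Z,\,1\le j\le n+1}$ with $|c^k_j|=j+k(n+1)-1$ and strictly lexicographically ordered actions; reindexing $c_i\coloneqq c^k_j$ with $i=j+k(n+1)-1$ produces exactly one generator in each integer degree with $\mathfrak{a}(c_i)<\mathfrak{a}(c_{i+1})$, yielding the claimed graded vector space structure.

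To show $\partial\equiv 0$, I would use that each graded piece is one-dimensional, so $\partial c_i=\alpha_i c_{i-1}$ for some $\alpha_i\in\Z_2$, and exploit the contact-form-preserving isotopy $\phi^t$ from Lemma \ref{lem:actionshift}. Along the family of pairs $(\phi^t(\tilde\Lambda_0),\tilde\Lambda_1)$, the fourth bullet of that lemma traces each $c^k_j$ continuously to $c^k_{j+1}$ (or $c^{k+1}_1$) at $t=1$, i.e.\ $c_i\rightsquigarrow c_{i+1}$ in the reindexed notation. Since $\phi^1$ is a contactomorphism and $\phi^1(\tilde\Lambda_1)=\tilde\Lambda_1$ set-wise, pulling the $t=1$ complex $RFC_*(\phi^1(\tilde\Lambda_0),\tilde\Lambda_1)$ back by $(\phi^1)^{-1}$ identifies it with $RFC_*(\tilde\Lambda_0,\tilde\Lambda_1)$. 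Composing with the PWC-invariance isomorphism from Theorem \ref{thm:pwc} (available in this contact-form-preserving setting by Remark \ref{rem:OtherNorms}) will produce a chain self-isomorphism $\Theta\colon RFC_*\to RFC_*$ with $\Theta(c_i)=c_{i+1}$: indeed, $(\phi^1)^{-1}$ preserves chord lengths, and the strict action-ordering of Lemma \ref{lem:mixeddegrees} then uniquely identifies the target as $c_{i+1}$. The identity $\partial\Theta=\Theta\partial$ yields $\alpha_{i+1}c_i=\Theta(\alpha_ic_{i-1})=\alpha_ic_i$, so $\alpha_i=\alpha$ is independent of $i$, and finally $0=\partial^2c_i=\alpha^2c_{i-2}=\alpha c_{i-2}$ in $\Z_2$ forces $\alpha=0$, hence $\partial\equiv 0$.

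The main obstacle will be rigorously justifying that $\Theta$ is a well-defined chain self-isomorphism with $\Theta(c_i)=c_{i+1}$: this requires tracking the generators through the PWC family of Theorem \ref{thm:pwc} along the isotopy (and arguing that handle-slide and birth/death contributions may be avoided by sufficiently generic choices, as supported by the non-degeneracy assertion in the third bullet of Lemma \ref{lem:actionshift}), and then matching the pulled-back chord with $c_{i+1}$ via the length-preservation of $(\phi^1)^{-1}$ together with the strict action-ordering of the mixed chords.
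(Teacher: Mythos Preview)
Your proposal shares the paper's core idea (exploit the shift symmetry of Lemma~\ref{lem:actionshift}) but finishes differently: the paper argues by contradiction via barcodes---if a finite bar runs from $c_i$ to $c_{i+1}$, then after the loop the generator $c_{i+1}$ becomes the \emph{start} of a bar as well, contradicting the fact that a single generator cannot be both start and endpoint of a bar---whereas you build a degree-shifting chain automorphism $\Theta$ and use $\partial\Theta=\Theta\partial$ together with $\partial^2=0$ to force $\alpha=0$. Your algebraic endgame is pleasantly direct and avoids barcodes; the paper's route stays within the barcode framework already developed for the rest of the article.

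Two technical points need fixing before your argument goes through. First, Lemma~\ref{lem:actionshift}'s third and fourth bullets describe the family $(\tilde\Lambda_0,\phi^t(\tilde\Lambda_1))$, and since $\phi^1(\tilde\Lambda_1)=\tilde\Lambda_1$ set-wise this returns literally to the same complex at $t=1$. You should move $\tilde\Lambda_1$ rather than $\tilde\Lambda_0$: this eliminates your pullback step, which is problematic because $(\phi^1)^{-1}$ preserves the round form but not the perturbed one used to define the complex, and hence does not obviously induce a chain map. Second, your appeal to Remark~\ref{rem:OtherNorms} fails for the same reason---$\phi^t$ is not contact-form preserving for the perturbed form. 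Instead, follow the paper and work in a finite action window containing $c_i$ and $c_{i+1}$ that is large relative to the oscillation of the Hamiltonian generating $\phi^t$, so that Theorem~\ref{thm:pwc} applies. With these corrections, the absence of birth/deaths (third bullet of Lemma~\ref{lem:actionshift}) combined with the one-dimensionality of each graded piece (which rules out nontrivial handle-slides, exactly as you anticipate) forces $\Theta(c_i)=c_{i+1}$ in $\Z_2$, and your argument concludes.
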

\begin{proof}
We perturb the contact form by a Morse function defined on the Bott manifold of Reeb orbits as in \cite{Bourgeois}. Proposition \ref{prop:planedegree} implies the sought bound on the degree of the periodic Reeb orbits. The well-definedness claimed in Part (1) is a consequence of this bound. The augmentation of the Chekanov--Eliashberg algebra then follows from Proposition \ref{prop:aug}.

For Part (2), the form of the graded vector space that underlies the complex can be seen by considering Lemma \ref{lem:mixeddegrees}. Recall that the mixed chords described in that lemma already are transversely cut out, and may be assumed to be unaffected by the small perturbation of the round contact form by the Morse functions. The same is also true for the path of mixed Reeb chords in the isotopy produced by Lemma \ref{lem:actionshift}.

Consider the barcode of the filtered complex
$$RFC_*(\tilde\Lambda_0,\tilde\Lambda_1)$$
with coefficients in $\Z_2$. The claim that the differential vanishes is equivalent to the claim that there are no finite bars in this barcode for any finite action window.

We argue by contradiction and assume that there exists a finite bar. By degree considerations, this finite bar must start at some Reeb chord $c_i$ end at $c_{i+1}$ for some $i \in \Z$. 

Consider the family of barcodes
that corresponds to the family of filtered complexes
$$RFC_*(\tilde\Lambda_0,\phi^t(\tilde\Lambda_1)), \:\: t \in [0,1],$$
obtained from the isotopy produced by Lemma \ref{lem:actionshift}.
Here we can impose a finite action window that includes $\ell(c_i), \ell(c_{i+1}),$ and is much larger than the oscillation of the Hamiltonian which generates $\phi^t.$ 
This enables us to apply Theorem \ref{thm:pwc} to get a PWC.
For this family of pairs of Legendrians,  no births/deaths of mixed Reeb occur; the Reeb chords  remain transverse, even if their lengths can shrink to zero and change direction. Since the barcode varies continuously, we deduce that no bar can (dis)appear during the deformation.

Lemma \ref{lem:actionshift} moreover implies that the generators of $RFC_*(\tilde\Lambda_0,\phi^t(\tilde\Lambda_1))$ vary continuously for $t \in [0,1]$, and connect $c_i$ to $c_{i+1}$. In particular, since $c_i$ is the start of a finite bar, we conclude that there must be a finite bar in the barcode of
$$RFC_*(\tilde\Lambda_0,\phi^1(\tilde\Lambda_1))=RFC_*(\tilde\Lambda_0,\tilde\Lambda_1)$$
that starts at $c_{i+1}$ as well. It is however impossible for a single chord to correspond to both an endpoint and a starting point of a bar, which is the sought contradiction. 
\end{proof}

\appendix

\section{Gradings and indices for Reeb chords and orbits}
\label{sec:GradingsAndIndices}
Here we recall some generalities about index formulas of pseudo-holomorphic curves and Conley--Zehnder indices for a $2n+1$-dimensional contact manifold $(Y,\alpha)$ with a choice of contact form which is non-degenerate in the Bott sense. We consider both cases of periodic Reeb orbits and Reeb chords on Legendrians. We also consider Maslov potentials for pairs of Legendrians and induced gradings of mixed Reeb chords. None of these results are new, but since they can be difficult to extract from the literature, we give a brief but systematic treatment of relevant definitions and basic results.

For simplicity we assume that the first Chern class of the contact distribution $\xi \to Y$ vanishes. The Reeb chords and orbits of $\alpha$ are assumed to be non-degenerate in the Bott sense; see \cite{Bourgeois}. (In particular, this also includes the case when the Reeb chords and orbits are non-degenerate in the usual sense.)

\subsection{Grading and Conley--Zehnder index for periodic Reeb orbits}
\label{sec:gradingperiodic}

The grading $|\gamma|$ of a contractible period Reeb orbit $\gamma \in C^\infty(S^1,Y)$ that lives in a Bott manifold $\Gamma$ is defined as the expected dimension
$$ |\gamma|=\OP{vdim}\mathcal{M}(\Gamma) $$
of the moduli space that consists of unparameterized finite-energy pseudo-holomorphic planes
$$ u\colon \C \to \R_\tau \times Y$$
that are
\begin{itemize}
\item asymptotic to some Reeb orbit in the (possibly zero-dimensional) Bott family $\Gamma \ni \gamma$ (which is free to vary); and
\item pseudo-holomorphic for an almost complex structure $J$ that is compatible with $d(e^\tau\alpha)$ and cylindrical outside of a compact subset.
  \end{itemize}
  Note that, here we do not identify two solutions that differ by a translation of the symplectization coordinate $\tau \in \R$. 
\begin{rem}
Both the contractibility of $\gamma$ and the vanishing of the first Chern class on spherical classes are crucial for the well-definedness of the grading.
\end{rem}
One can compute the expected dimension from topological data of the map $u$. Namely,
$$ |\gamma| \coloneqq \OP{index}(u),$$
This Fredholm index $\OP{index}(u)$ has been computed; see \cite{Bourgeois},  \cite[Proposition 3.7]{Wendl10} or \cite{DiogoLisi}. Here we use the formulation from \cite[Proposition 3.7]{Wendl10} applied to the special case of a pseudo-holomorphic plane.
\begin{equation}
  \label{eq:index}
  \OP{index}(u)=((n+1)-3)+ \mu_{\OP{CZ}}(\mathbf{A}_\gamma-\delta\cdot\id)+2c^\xi_{1,\OP{rel}}[u],
\end{equation}
where $\delta>0$ is a sufficiently small number.
Strictly speaking, we will not define the individual terms in the expression $\mathbf{A}_\gamma-\delta\cdot\id,$ although see Remark \ref{rem:CZnotation}.
\begin{rem} The quantity $n+1$ in our formula corresponds to $n$ in \cite{Wendl10}. In addition, unlike \cite[Proposition 3.7]{Wendl10}, we consider the moduli space of \emph{unparameterized} planes; one thus has to add $\dim_\R \OP{Aut}(\C)=4$ to \eqref{eq:index} in order to get the cited formula.
  \end{rem}
What remains is now to explain the quantities in the formula. First we need to choose a symplectic trivialization of the contact planes $\xi$ along $\gamma$ (up to homotopy).
\begin{itemize}
\item \emph{The relative first Chern class $c^\xi_{1,\OP{rel}}[u]$} is the algebraic number of zeros of a section of the determinant line $u^*\det\xi$ line that is constant in the trivialization of $\gamma^*\det{\xi} \to S^1$ chosen along the Reeb orbit.
\item \emph{The Conley--Zehnder index $\mu_{\OP{CZ}}(\mathbf{A}_\gamma-\delta\cdot\id)$ in the non-degenerate case} can be computed as a Maslov index of a suitable (non-closed) path of Lagrangians, as shown in \cite[Remark 5.4]{RobbinSalamon}. The path of Lagrangians is obtained by taking the graphs of the symplectic path induced by the linearized Reeb flow (in the chosen trivialization of $\gamma^*\xi\to S^1$); this is a path of symplectic matrices that start with the identity matrix and ends with a symplectic matrix whose eigenvalues are all different from one (by the non-degeneracy assumption);
\item \emph{The Conley--Zehnder index $\mu_{\OP{CZ}}(\mathbf{A}_\gamma-\delta\cdot\id)$ in the degenerate case}, i.e.~when the linearized time-one map of the Reeb flow along $\gamma$ has an eigenvalue equal to one, is computed as follows. Deform the path of symplectic matrices induced by the linearized Reeb flow by adding a small positive rotation $s \mapsto e^{i\delta s}\id$, $\delta>0$, in the contact plane. Then compute the above Conley--Zehnder index for this deformed path.
\end{itemize}
\begin{rem}
\label{rem:CZnotation}
  The notation $\mu_{\OP{CZ}}(\mathbf{A}_\gamma-\delta\cdot\id)$ is motivated by the non-degenerate case (second bullet point above), where the Conley--Zehnder index also can be computed by first perturbing the asymptotic operator $\mathbf{A}_\gamma$ for the linearized $\overline{\partial}_J$-equation by adding a small negative multiple of the identity, and then computing the corresponding classical Conley--Zehnder index.  We refer to \cite[Section 3.2]{Wendl10} for a description of the asymptotic operator $\mathbf{A}_\gamma$, which has a discrete spectrum, and which is injective if and only if the periodic Reeb orbit $\gamma$ is non-degenerate.
  \end{rem}

  As described in \cite{Bourgeois}, for a contact manifold $(Y,\alpha)$ with a Reeb flow that is non-degenerate in the Bott sense, one can use a small Morse function $f \colon \Gamma \to \R$ defined on the Bott manifolds in order to perturb the contact form to one whose Reeb flow is non-degenerate, while still keeping control of the periodic orbits. More precisely, the Reeb orbits obtained by such a perturbation coincides with the critical points of the Morse function $f$, where the Reeb orbit corresponding to the critical point $p$ has Conley--Zehnder index
  \begin{equation}
    \label{eq:czmorse}\mu_{\OP{CZ}}(\mathbf{A}_\gamma-\delta\cdot\id)+\OP{index}(p)-\dim \Gamma,
    \end{equation}
where $\OP{index}(p)$ denotes the Morse index of the critical point $p$ of $f$.
\begin{rem} In particular, the Conley--Zehnder in the degenerate case coincides with the Conley--Zehnder index of the non-degenerate orbit at the maximum of the Morse function that arises from the aforementioned perturbation.
  \end{rem}

  \subsection{Grading and Conley--Zehnder index for pure Reeb chords which bound planes}
  \label{sec:puregrading}
In addition to the vanishing of the first Chern class of $\xi \to Y$, we now also assume that the Maslov class of $\R \times \Lambda$ vanishes. In this case, the grading $|c|$ of a contractible pure Reeb chord $c \in C^\infty([0,\ell],\{0,\ell\}),(Y,\Lambda))$ that lives in a Bott manifold $\mathcal{Q}$, can be defined as the expected dimension
$$ |c|=\OP{vdim}\mathcal{M}(c).$$
Here $\mathcal{M}(c)$ denotes the moduli space that consists of unparameterized finite-energy pseudo-holomorphic half-planes
$$ u\colon (\C \cap \{y \ge 0\},\{y=0\}) \to (\R_\tau \times Y,\R \times \Lambda)$$
that are
\begin{itemize}
\item asymptotic to some Reeb chord in the (possibly zero-dimensional) Bott family $\mathcal{Q} \ni c$ (which is free to vary); and
\item pseudo-holomorphic for an almost complex structure $J$ that is compatible with $d(e^\tau\alpha)$ and cylindrical outside of a compact subset.
  \end{itemize}
  Note that, here we do not identify two solutions that differ by a translation of the symplectization coordinate $\tau \in \R$. 
\begin{rem}
Again, for the well-definedness of the above grading, both the contractibility of $\gamma$ as well as the vanishing of the Maslov class for disks are crucial properties.
\end{rem}
As before we can compute the expected dimension from topological data of the map $u$ by
$$ |c| \coloneqq \OP{index}(u),$$
where the Fredholm index can be expressed as
\begin{equation}
  \label{eq:relindex}
  \OP{index}(u)=(\OP{CZ}(c)-1)+\mu_{\R \times \Lambda}[u]
  \end{equation}
We proceed to describe the quantities in the above formula. First we need to choose a continuous capping path of Lagrangian tangent planes along $c(t)$ that connects $T_{c(0)}\Lambda \subset \xi_{c(0)}$ to $T_{c(\ell)}\Lambda \subset \xi_{c(\ell)}$ (up to homotopy).
\begin{itemize}
\item \emph{The relative Maslov class $\mu_{\R \times  \Lambda}[u]$} of the half-plane $u$ which is defined by concatenating the path of Lagrangian tangent planes along $u|_{\{y=0\}}$ with the capping path at the puncture to obtain a closed path of Lagrangian tangent planes, and then computing the usual Maslov index for this loop of Lagrangian tangent planes in the trivialization induced by $u$.
\item \emph{The Conley--Zehnder index $\OP{CZ}(c)$ in the non-degenerate case} is defined as follows. First, we use the Reeb flow to identify the contact planes along the Reeb chord. Construct a closed loop of Lagrangian tangent planes by rotating $T_{c(0)}\Lambda$ to $T_{c(1)}\Lambda$ in the contact plane by using the minimal positive K\"{a}hler angles (these are non-zero by the non-degeneracy assumption). Then concatenate this path with the capping path to obtain a loop of Lagrangian tangent planes. The Maslov index of this loop is the Conley--Zehnder index.
  \item \emph{The Conley--Zehnder index $\OP{CZ}(c)$ in the degenerate case} is computed as above, but where we first perturb the Lagrangian tangent plane $T_{c(0)}\Lambda$ at the starting point by a small positive rotation $e^{i\delta}\id$ in the contact plane.
  \end{itemize}

  Similarly to the perturbation of Bott manifolds of periodic Reeb orbits by Morse functions as constructed in \cite{Bourgeois}, one can also perform a perturbation of the Bott manifolds of Reeb chords. We again obtain an analogous formula
  \begin{equation}
    \label{eq:relczmorse}
    \OP{CZ}(c)+\OP{index}(p)-\dim \mathcal{Q}
    \end{equation}
for the Conley-Zehnder index of the non-degenerate Reeb orbit that corresponds to the critical point $p$ of the Morse function $f \colon \mathcal{Q} \to \R$. Here $\OP{index}(p)$ denotes the Morse index of the critical point $p$ of $f$.
\begin{rem} As in the periodic orbit case, the Conley--Zehnder in the degenerate case coincides with the Conley--Zehnder index of the non-degenerate orbit at the maximum after such a perturbation.
  \end{rem}
  
\subsection{Grading and Conley--Zehnder index for mixed Reeb chords}
\label{sec:maslovpotential}

In this subsection we assume that the mixed Reeb chords are all non-degenerate in the strong sense.

The grading of a mixed Reeb chord with endpoints on two different Legendrians $\Lambda_0$ and $\Lambda_1$ depends on several additional choices. First, we need to choose a symplectic trivialization of the square of the determinant $\C$-line bundle $\left(\det_\C \xi\right)^{\otimes_\C 2} \to Y$ (up to homotopy). This is possible since the first Chern class vanishes (in fact one only needs it to be two-torsion). Note that, since $T\Lambda_i \subset \xi$ is Lagrangian, its image $[T_{c(0)}\Lambda_i] \subset \left(\det_\C \xi_{c(0)}\right)^{\otimes_\C 2} \cong \C$ inside the determinant line is a one-dimensional real subspace. Second, we need to make continuous choices of lifts to $\R$ of the angular phase in $\R/\pi\Z$, i.e.~the argument, of the images
$$[T\Lambda_i] \coloneqq \left(\det_\R\Lambda_i\right)^{\otimes_\R 2} \subset \left(\det_\C\xi\right)^{\otimes_\C 2} \to Y$$
of these real subspaces in the latter $\C$-line bundle. (Passing to the square means that this operation is well-defined on unoriented Legendrian tangent spaces.) The choice of such a lift is called a {\bf Maslov potential}, and it exists if and only if the Maslov classes of $\Lambda_i$ vanish. See \cite[Section 2.5]{GDRFamilies} or \cite{Dimitroglou:Cthulhu} for more details about Maslov potentials in the Legendrian setting.

We can define the grading of a mixed Reeb chord $c \colon [0,\ell] \to Y$ from $\Lambda_i$ to $\Lambda_j$ as follows. Let $\overline\phi_i \in \R$ be the lifts of phases of $[T\Lambda_i] \subset \left(\det_\C \xi\right)^{\otimes_\C 2}$ at the endpoints of the Reeb chord $c$ as defined by the choice of Maslov potential. Use the Reeb flow $\phi^t_R \colon (Y,\alpha) \to (Y,\alpha)$ to identify $T_{c(0)}\Lambda_i$ with a Lagrangian tangent plane $\phi^\ell_R(T_{c(0)}\Lambda_i)\subset \xi_{c(\ell)}$. By continuity (of course using the triviality of $\left(\det_\C\xi_{c(1)}\right)^{\otimes_\C 2} \to Y$) we obtain a lift $\overline\phi_0'$ of the phase of $[\phi^\ell_R(T_{c(0)}\Lambda_i)] \subset \left(\det_\C \xi_{c(\ell)}\right)^{\otimes_\C 2}.$ Perform the smallest positive rotation $e^{i\theta_0}$, $\theta_0 \in (0,\pi)$ that makes the real line
$$[\phi^\ell_R(T_{c(0)}\Lambda_i)] \subset \left(\det_\C \xi_{c(\ell)}\right)^{\otimes_\C 2}\cong\C$$
coincide with $[\phi^\ell_R(T_{c(\ell)}\Lambda_i)]$. (The non-degeneracy implies that this angle is non-zero). The Conley--Zehnder index is then the difference
$$ \OP{CZ}(c) = \frac{1}{\pi}\left(\overline\phi_0'+\theta_0-\overline\phi_1\right) \in \Z$$
of lifts of phases, and we define the grading via
$$ |c| \coloneqq \OP{CZ}(c)-1 \in \Z.$$
Recall that this grading depends on the chosen symplectic trivialization of the $\C$-line bundle $\left(\det_\C\xi\right)^{\otimes_\C 2} \to Y$ as well as the choice of Maslov potentials of the Legendrians involved.

A feature of this grading is that an unparameterized pseudo-holomorphic strip in $\R_\tau \times Y$ that has a positive asymptotic to the mixed Reeb chord $c^+$ and a negative Reeb chord asymptotic to the mixed Reeb chord $c^-$, lives in a moduli space of expected dimension
$$ \OP{vdim}\mathcal{M}(c^+,c^-)=|c^+|-|c^-|.$$
Note that, for this dimension, we again do not identified solutions that differ by a translation of the $\tau$-coordinate.

We end by noting that Legendrian isotopies naturally induce continuous extension of the Maslov potential. In the case of a loop $\Lambda^t$ of Legendrians, the effect on the Maslov potential at a point $p \in \Lambda^0=\Lambda^1$ can be seen to be as follows. Take a smooth path $\gamma(t) \in \Lambda^t$ so that $\gamma(0)=p=\gamma(1)$, and consider the trivialization of $\gamma^*\left(\det_\C\xi\right)^{\otimes_\C 2}$ induced by the real lines $[T_{\gamma(t)}\Lambda^t]$. The action on this isotopy of the Maslov potential at $p$ can then be seen to be equal to
$$c_{1,\OP{rel}}^\xi[u] \in \Z.$$
Here the relative first Chern class computes the algebraic number of zeros of a smooth extension of the section of $u^*\left(\det_\C\xi\right)^{\otimes_\C 2}$ along a smooth orientable compact surface $u \colon \Sigma \to Y$ whose boundary parametrizes $\gamma(t)$, where we require the section to be non-zero and constant with respect to the aforementioned trivialization along the boundary $\gamma(t)$.

\section{Length of trace cobordisms and conformal factors}

\label{sec:trace}

It is well-known that the trace of a Legendrian isotopy can be deformed to an exact Lagrangian concordance in the symplectization; see \cite{Chantraine, ChantraineColinDR, EkholmHondaKalman} for different versions of this construction. Here we revisit the version from \cite[Theorem 1.2]{Chantraine} and show that it fits our purposes as far as control on the length is concerned. The length of a Lagrangian cobordism was defined in \cite{SabloffTraynor15} by Sabloff--Traynor; see Section \ref{sec:Geometry}.

Let $\Lambda \subset (Y,\alpha)$ be a Legendrian submanifold of a contact manifold with a choice of contact form $\alpha$. Let $\phi^t_{\alpha, H} \colon (Y,\ker \alpha) \to (Y,\ker\alpha)$ be a contact isotopy with $\phi^0_{\alpha, H}=\id$, which thus is generated by a contact Hamiltonian $H_t \colon Y \to \R$ defined by $H_t \circ \phi^t_{\alpha, H}=\alpha(\dot\phi^t_{\alpha, H})$. Furthermore, let $f_t \colon Y \to \R$ be the smooth function for which $(\phi^t)_{\alpha, H}^*\alpha=e^{f_t}\alpha.$ The function $e^{f_t}$ is called the {\bf conformal factor} of the contact isotopy and was introduced in Section \ref{sec:Introduction}.
In particular, $(\tau,y) \mapsto (\tau-f_t(y),\phi^t_{\alpha, H}(y))$, is a Hamiltonian isotopy of the symplectization that is generated by the $t$-dependent Hamiltonian $e^\tau H_t \colon \R_\tau \times Y \to \R$. Note that this symplectic isotopy preserves the primitive $e^\tau\alpha$ of the symplectic form.

\begin{prop}
  \label{prop:trace}
For a contact isotopy as above, the following holds for any arbitrary choice of $\epsilon>0$:
\begin{enumerate}
\item There exists a Lagrangian trace cobordism $L_{01} \subset (\R_\tau \times Y,e^\tau\alpha)$ from $\Lambda$ to $\phi^1_{\alpha, H}(\Lambda)$ of length equal to $-e^{1+\epsilon}\min_{x \in Y, t \in [0,1]}f_t(x)\ge 0$
\item There exists a Lagrangian trace cobordism $L_{10} \subset (\R_\tau \times Y,e^\tau\alpha)$ from $\phi^1_{\alpha, H}(\Lambda)$ to $\Lambda$ of length equal to $e^{1+\epsilon}\max_{x \in Y, t \in [0,1]}f_t(x) \ge 0$.
\item One can assume that the two concatenations $L_{01} \odot L_{10}$ and $L_{10} \odot L_{01}$ of traces, which thus are Lagrangian cobordism from $\Lambda$ to itself and $\phi^1_{\alpha, H}(\Lambda)$ to itself, respectively, are of length
  $$c_0 \coloneqq e^{1+\epsilon}\left(\max_{x \in Y, t \in [0,1]}f_t(x)-\min_{x \in Y, t \in [0,1]}f_t(x)\right).$$
  Moreover, these concatenations are Hamiltonian isotopic to the trivial Lagrangian cylinders $\R \times \Lambda$ and $\R \times \phi^1_{\alpha, H}(\Lambda)$, respectively, by isotopies supported in a subset of the form $[0,c_0] \times Y$ (after a suitable translation of the $\tau$-coordinate).
\item All Lagrangian cobordisms constructed above have primitives of the pull-back of $e^\tau\alpha$ that can be taken to be globally constant on each cylindrical end $\pm \tau \gg 0.$
\end{enumerate}
\end{prop}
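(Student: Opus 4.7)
The plan is to adapt Chantraine's construction of the trace Lagrangian cobordism from \cite{Chantraine}*{Theorem 1.2}, while tracking the minimal $\tau$-window in which the cobordism is non-cylindrical. First I would lift the contact isotopy $\phi^t_{\alpha,H}$ to the Hamiltonian isotopy $\tilde\Phi^t(\tau,y) = (\tau - f_t(y), \phi^t_{\alpha,H}(y))$ of $(\R_\tau \times Y, d(e^\tau\alpha))$, which preserves the Liouville primitive $e^\tau\alpha$ and is generated by the $\tau$-exponentially-weighted Hamiltonian $e^\tau H_t(y)$. In particular, $\tilde\Phi^t(\R \times \Lambda)$ agrees set-wise with $\R \times \phi^t_{\alpha,H}(\Lambda)$, but is parameterized by a $\tau$-coordinate shifted by $-f_t(y)$.

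Next, I would choose a smooth non-decreasing cutoff $\chi \colon \R \to [0,1]$ with $\chi(\tau) = 0$ for $\tau \le 0$ and $\chi(\tau) = 1$ for $\tau \ge T$, and define $L_{01}$ as the Lagrangian submanifold obtained by suspending the isotopy $\tilde\Phi^{\chi(\tau)}$ in the symplectization: the non-cylindrical portion is swept out by flowing $\R \times \Lambda$ using the $\tau$-dependent Hamiltonian whose profile is cut off by $\chi$. Outside $\{0 \le \tau \le T\}$ this gives back the trivial cylinders. The Lagrangian property is straightforward from the identification with the graph of a $\chi$-modified Hamiltonian flow, and the exactness follows from the standard exactness of lifted contact isotopies. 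The length estimate comes from the fact that accommodating the $\tau$-shift of $-f_t(y)$ within the cobordism region requires $T$ to dominate $-\min_{y,t} f_t(y)$ (modulo the exponential weighting $e^\tau$ inherent in the symplectization Hamiltonian); taking $T = -e^{1+\epsilon}\min_{y,t} f_t(y)$ and concentrating $\chi'$ appropriately gives the claimed bound in (1). The $\epsilon$ provides the room to smooth the cutoff while keeping the cobordism embedded and the construction Lagrangian.

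Item (2) is obtained by applying the same construction to the reverse isotopy $(\phi^{1-t}_{\alpha,H}) \circ (\phi^{1}_{\alpha,H})^{-1}$, whose conformal-factor function is essentially $-f_t \circ \phi^t_{\alpha,H}$; the role of $\min f_t$ and $\max f_t$ is therefore interchanged, explaining the asymmetry between $L_{01}$ and $L_{10}$. For item (3), the concatenations represent (after the rescaling) the identity contact isotopy composed with its inverse, so the Hamiltonian isotopy to the trivial cylinder is built by continuously contracting the concatenated cutoff profile to a constant profile, and this deformation can be arranged to be supported inside the $\tau$-window of length $c_0$. Item (4) is obtained by noting that the pullback of $e^\tau\alpha$ to $L_{ij}$ is exact, and since it is closed and locally constant on each cylindrical end, we may add a global constant on each end separately to normalize the primitive.

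The main obstacle is getting the length estimate sharp enough to yield the precise factor $e^{1+\epsilon}$ rather than a weaker bound. This requires a careful analysis of the interaction between the exponential $e^\tau$ in the symplectization Hamiltonian and the cutoff function $\chi$: one must verify that the $\tau$-shift realized by $\tilde\Phi^{\chi(\tau)}$ stays within the prescribed window as $\tau$ traverses $[0,T]$. A secondary subtlety is arranging the concatenation construction in (3) so that the supporting interval has length exactly $c_0$ rather than the sum of the two individual lengths, which requires taking advantage of the fact that the maximum and minimum of $f_t$ are generically attained at different points and do not both force the full bound simultaneously in the concatenated isotopy.
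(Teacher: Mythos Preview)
Your approach is essentially the paper's: both lift the contact isotopy to the $e^\tau\alpha$-preserving Hamiltonian isotopy $(\tau,y)\mapsto(\tau-f_t(y),\phi^t(y))$, cut off the generating Hamiltonian by a bump function in $\tau$ (your $\chi$, the paper's $\rho$), and apply the resulting time-one map to $\R\times\Lambda$. A few corrections and comparisons are worth noting.

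First, your ``main obstacle'' is not one. There is no subtle interaction between $e^\tau$ and the cutoff to analyse; the paper simply chooses the width $\delta$ of the cutoff's support small enough that $\delta+(-\min f_t)\le e^{1+\epsilon}(-\min f_t)$, which is possible for any $\epsilon>0$ whenever $\min f_t<0$. The factor $e^{1+\epsilon}$ is just padding to absorb $\delta$.

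Second, your ``secondary subtlety'' for (3) does not exist: $c_0$ \emph{is} the sum of the two individual lengths, since $-e^{1+\epsilon}\min f_t + e^{1+\epsilon}\max f_t = c_0$. For (3) the paper's route differs slightly from yours: rather than contracting the cutoff profile, it runs the entire construction through the one-parameter family of contact isotopies $t\mapsto\phi^{rt}$, $r\in[0,1]$, whose conformal factors obey the same $\max/\min$ bounds; this yields a family of concatenated trace cobordisms from the trivial cylinder ($r=0$) to $L_{01}\odot L_{10}$ ($r=1$), all supported in $[0,c_0]\times Y$.

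Third, your argument for (4) is not quite right: you cannot ``add a global constant on each end separately'' to a single primitive on a connected cobordism. The paper instead uses Cartan's formula: since $\rho'$ has compact support, the cut-off Hamiltonian flow pulls $e^\tau\alpha$ back to $e^\tau\alpha+dG$ with $dG$ compactly supported, hence $G$ locally constant outside a compact set; restricting to $L$ gives the claim.
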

\begin{proof}
  (1): It suffices to consider the image of the trivial Lagrangian cylinder $\R \times \Lambda$ under the time-one map of the Hamiltonian isotopy defined by a Hamiltonian of the form $\rho(\tau)e^\tau H_t.$ Recall that $\tau$ is the symplectization coordinate here, while $t$ is the time-coordinate. We take $\rho(\tau) \colon \R_\tau \to 0$ to be a smooth function that vanishes on the subset $(-\infty,0]$ and is equal to one on the subset $[\delta,+\infty)$ for some $\delta>0$.

  So far we have merely repeated the argument from the proof of \cite[Theorem 1.2]{Chantraine}. To achieve the bound on the length, it suffices to choose $\delta>0$ sufficiently small, so that the inequality
  $$e^{1+\epsilon}\left(-\min_{x \in Y, t \in [0,1]}f_t(x) \right) \ge \delta-\min_{x \in Y, t \in [0,1]}f_t(x)$$
is satisfied. 
  To show the result follows from this inequality, we use the fact that $e^\tau H_t$ generates a Hamiltonian isotopy $(\tau,y) \mapsto (\tau-f_t(y),\phi^t_{\alpha, H}(y))$ of the symplectization, which means that $\max_{y \in Y} f_t(y)$ 
 is the maximal translation in the negative symplectization direction at time $t$.

  (2): This is similar to (1), using the fact that $(\phi^t_{\alpha, H})^{-1}$ again is a contact isotopy (thus it is generated by a contact Hamiltonian) whose conformal factor is equal to $-f_t$. We then apply the construction from (1) to the cylinder $\R \times \phi^1_{\alpha, H}(\Lambda)$ instead of $\R \times \Lambda$, while using the contact isotopy $(\phi^t_{\alpha, H})^{-1}$ instead of $\phi^t_{\alpha, H}$.

    (3): The constructions in (1) and (2) can be taken to depend smoothly on the family of contact isotopies $[0,1] \ni t \mapsto (\phi^t_{\alpha, H})_r \coloneqq  \phi^{rt}_{\alpha, H}$, where the family is parametrized by $r \in [0,1]$. Writing $e^{f_{r,t}}$ for the conformal factor of $(\phi^t_{\alpha, H})_r$ we immediately note that
    $$ \max_{x \in Y, t \in [0,1]}f_t(x) \ge \max_{x \in Y, t \in [0,1]}f_{r,t}(x)$$
    and
    $$ \min_{x \in Y, t \in [0,1]}f_{r,t}(x) \ge \min_{x \in Y, t \in [0,1]}f_t(x)$$
    holds for any $r \in [0,1]$, from which the sought length properties follow.

    In this manner we produce families of Lagrangian cobordisms whose concatenation $L_{01}^r \odot L_{10}^r$ (resp.~$L_{10}^r \odot L_{01}^r$) interpolate between $\R \times \Lambda$ (resp.~$\R \times \phi^1_{\alpha, H}(\Lambda)$) at $r=0$ and $L_{01} \odot L_{10}$ (resp.~$L_{10} \odot L_{01}$) at  $r=1$. The corresponding isotopy may be assumed to be supported inside $[0,c_0] \times Y$. Since this is an isotopy through exact Lagrangians, a standard fact implies that it is generated by a global Hamiltonian isotopy.

    (4):  As follows by Cartan's formula, a Hamiltonian isotopy $\phi^t_{\rho(\tau)e^\tau H_t} \colon \R \times Y \to \R \times Y$, with $\rho'(\tau)$ being of compact support, pulls back the primitive of the symplectic form $e^\tau\alpha$ to a one form $e^\tau\alpha+dG$. Since $\phi^t_{\rho(\tau)e^\tau H_t}$ preserves the primitive $e^\tau\alpha$ outside of a compact subset, we conclude that $G \colon Y \to \R$ is locally constant outside of a compact subset or, equivalently, $dG$ is compactly supported. The sought statement follows from this.
\end{proof}





\end{document}